\newtheorem{theorem}{Theorem}[section]
\newtheorem{proposition}[theorem]{Proposition}
\newtheorem{lemma}[theorem]{Lemma}
\newtheorem{corollary}[theorem]{Corollary}
\newtheorem{problem}[theorem]{Problem}
\newcommand{\nd }{\noindent}
\newcommand{\T }{{\rm T}}
\newcommand{\Aut }{\mathop{\rm Aut}\nolimits}
\newcommand{\lcm}{{\rm lcm}}
\newcommand{\mdeg }{\mathop{\rm mdeg}\nolimits}
\newcommand{\w}{{\bf w}}
\newcommand{\degw}{\deg_{\w}}
\newcommand{\mdegw}{\mdeg_{\w}}
\newcommand{\vv}{{\bf v}}
\newcommand{\degv}{\deg_{\vv}}
\newcommand{\mdegv}{\mdeg_{\vv}}
\newcommand{\id	}{{\rm id}}
\newcommand{\zs}{\{ 0\} }
\newcommand{\sm}{\setminus}
\newcommand{\N}{{\bf N}}
\newcommand{\Z}{{\bf Z}}
\newcommand{\Q}{{\bf Q}}
\newcommand{\Zn}{{\N _{0}}}
\newcommand{\x}{{\bf x}}
\newcommand{\kx}{k[{\bf x}]}
\newcommand{\kxr}{k({\bf x})}
\begin{document}

\title{On the Kara\'s type theorems for the multidegrees 
of polynomial automorphisms}

\author{Shigeru Kuroda}

\date{}

\footnotetext{2010 {\it Mathematical Subject Classification}. 
Primary 14R10; Secondary 13F20, 16Z05. }

\footnotetext{Partly supported by the Grant-in-Aid for 
Young Scientists (B) 24740022, 
Japan Society for the Promotion of Science. }

\maketitle

\begin{abstract}
To solve Nagata's conjecture, 
Shestakov-Umirbaev 
constructed a theory 
for deciding wildness 
of polynomial automorphisms in three variables. 
Recently, 
Kara\'s and others study multidegrees of 
polynomial automorphisms 
as an application of this theory. 
They give various necessary conditions 
for triples of positive integers to be multidegrees 
of tame automorphisms in three variables. 
In this paper, 
we prove a strong theorem unifying these results 
using the generalized Shestakov-Umirbaev theory. 
\end{abstract}

\section{Introduction}

Let $k$ be a field of characteristic zero, 
and $\kx =k[x_1,\ldots ,x_n]$ 
the polynomial ring in $n$ variables over $k$. 
In this paper, 
we discuss the group 
$\Aut _k\kx $ of automorphisms of the 
$k$-algebra $\kx $. 
To denote an element of $\Aut _k\kx $, 
we often use the notation $F=(f_1,\ldots ,f_n)$, 
where each $f_i$ represents the image of $x_i$ 
under the automorphism. 
We say that $F$ is {\it elementary} if there exist 
$1\leq l\leq n$, 
$a\in k^{\times }$ and 
$p\in k[\x \sm \{ x_l\} ]$ 
such that $f_l=ax_l+p$ and $f_i=x_i$ 
for each $i\neq l$, 
where we regard $\x $ 
as the set $\{ x_1,\ldots ,x_n\} $ of variables. 
The subgroup $\T_n(k)$ of $\Aut _k\kx $ 
generated by all the elementary automorphisms 
is called the {\it tame subgroup}. 
We say that $F$ 
is {\it tame} 
if $F$ belongs to $\T_n(k)$, 
and {\it wild} otherwise.

The following problem is one of the fundamental problems 
in Polynomial Ring Theory.

\begin{problem}\label{prob:TGP}
Does it hold that $\Aut _k\kx =\T _n(k)$? 
\end{problem}

Since $k$ is a field, 
every automorphism of $\kx $ is elementary if $n=1$. 
Due to Jung~\cite{Jung}, 
$\Aut _k\kx =\T _2(k)$ holds for $n=2$. 
In the case of $n=3$, 
Nagata~\cite{Nagata} conjectured that 
$\Aut _k\kx \neq \T_3(k)$, 
and constructed a candidate of wild automorphism. 
Recently, 
Shestakov-Umirbaev~\cite{SU'}, \cite{SU} solved 
the conjecture in the affirmative. 
They not only proved that Nagata's automorphism is wild, 
but also gave a criterion for deciding wildness 
of elements of $\Aut _k\kx $ when $n=3$. 
Problem~\ref{prob:TGP} remains open for $n\geq 4$.

Now, 
for each $F\in \Aut _k\kx $, 
we define the {\it multidegree} of $F$ by 
$$
\mdeg F=(\deg f_1,\ldots ,\deg f_n). 
$$
Here, 
$\deg f$ denotes the total degree 
of $f$ for each $f\in \kx $. 
We set 
$$
\mdeg S=\{ \mdeg F\mid F\in S\} 
$$
for each subset $S$ of $\Aut _k\kx $.

The following proposition is due to 
Kara\'s~\cite[Proposition 2.2]{345}.

\begin{proposition}[Kara\'s]\label{prop:Karas345}
Let $d_1,\ldots ,d_n\in \N $ be such that 
$d_1\leq \cdots \leq d_n$. 
If there exists $2\leq i\leq n$ 
such that $d_i$ belongs to 
$$
\Zn d_1+\cdots +\Zn d_{i-1}
:=\{ u_1d_1+\cdots +u_{i-1}d_{i-1}
\mid u_1,\ldots ,u_{i-1}\in \Zn \} , 
$$
then $(d_1,\ldots ,d_n)$ belongs to 
$\mdeg \T _n(k)$. 
\end{proposition}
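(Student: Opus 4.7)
My plan is to construct an explicit tame automorphism $F\in \T_n(k)$ with multidegree $(d_1,\ldots ,d_n)$, expressed as a product of $n$ elementary automorphisms in which $x_i$ plays the role of an auxiliary variable linking the other coordinates to the relation $d_i=u_1d_1+\cdots +u_{i-1}d_{i-1}$.

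Fix such a representation with $u_1,\ldots ,u_{i-1}\in \Zn $. For each $j\in \{1,\ldots ,n\}\sm \{i\}$, let $E_j$ be the elementary automorphism defined by $E_j(x_j)=x_j+x_i^{d_j}$ and $E_j(x_k)=x_k$ for $k\neq j$. These pairwise commute (each touches a different variable and all use only the fixed variable $x_i$), so their composition $G$ satisfies $G(x_j)=x_j+x_i^{d_j}$ for $j\neq i$ and $G(x_i)=x_i$. Next let $H$ be the elementary automorphism defined by $H(x_i)=x_i+x_1^{u_1}\cdots x_{i-1}^{u_{i-1}}$ and $H(x_k)=x_k$ for $k\neq i$. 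I set $F=G\circ H$; as a product of $n$ elementary automorphisms, $F$ lies in $\T_n(k)$.

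To verify $\mdeg F=(d_1,\ldots ,d_n)$: for $j\neq i$ one has $F(x_j)=G(x_j)=x_j+x_i^{d_j}$, of degree $d_j$. Applying the ring homomorphism $G$ to $H(x_i)$ yields
\[
F(x_i)=x_i+\prod _{j<i}(x_j+x_i^{d_j})^{u_j},
\]
where each factor has total degree $u_jd_j$ with $x_i^{u_jd_j}$ among its top-degree monomials. Hence the product has total degree $\sum _{j<i}u_jd_j=d_i\geq 1$, and therefore $\deg F(x_i)=d_i$.

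The construction is sufficiently explicit that no serious obstacle arises. The only step that requires any care is the final degree count, where one checks that the product of the leading $x_i$-only monomials of the factors contributes the monomial $x_i^{d_i}$ with coefficient $1$ and thus no cancellation can lower the total degree of the product below $d_i$.
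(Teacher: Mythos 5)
Your construction is correct: $F=G\circ H$ is a composition of elementary automorphisms, and the degree count $\deg F(x_i)=\sum_{j<i}u_jd_j=d_i$ goes through since over an integral domain the degree of the product $\prod_{j<i}(x_j+x_i^{d_j})^{u_j}$ is exactly $\sum_{j<i}u_jd_j$, and the added term $x_i$ cannot cancel the top form (all coefficients are nonnegative). The paper itself gives no proof of this proposition --- it is quoted from Kara\'s \cite[Proposition 2.2]{345} --- but your argument is exactly the standard explicit construction, and it matches in spirit the example the paper does write out in the introduction ($f_j=x_j+x_n^{d_j}$ for $j\neq n$ followed by one further elementary substitution), so there is nothing to object to.
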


Here, 
$\N $ and $\Zn $ denote the sets of 
positive integers and 
nonnegative integers, 
respectively. 
In the same paper, 
Kara\'s proved that 
$(3,4,5)$, $(3,5,7)$, $(4,5,7)$ or $(4,5,11)$ 
do not belong to 
$\mdeg \T _3(k)$ using 
the theory of Shestakov-Umirbaev~\cite{SU'}, \cite{SU}. 
This means  that any automorphism with multidegree 
$(3,4,5)$, $(3,5,7)$, $(4,5,7)$ or $(4,5,11)$ is wild.

Following this paper, 
Kara\'s and others recently study detailed conditions 
for elements of $\N ^3$ to belong to $\mdeg \T _3(k)$. 
They derive from the Shestakov-Umirbaev theory 
various conditions under which 
$(d_1,d_2,d_3)\in \N ^3$ belongs to $\mdeg \T_3(k)$ 
if and only if $d_2$ is divisible by $d_1$, 
or $d_3$ belongs to $\Zn d_1+\Zn d_2$. 
Because of Proposition~\ref{prop:Karas345}, 
this is the same as giving conditions under which 
$(d_1,d_2,d_3)\in \N ^3$ does not belong to $\mdeg \T_3(k)$ 
if $d_2$ is not divisible by $d_1$, 
and $d_3$ does not belong to $\Zn d_1+\Zn d_2$. 
By a {\it Kara\'s type theorem}, 
we mean a theorem which describes such a condition. 
In the following diagram, 
the oval represents the set of 
elements of $\N ^3$ satisfying one of such conditions.

{\unitlength=5mm 
\begin{picture}(20,14)

\put(2.7,7){

\put(0,-0.3){

\put(2.3,-3)
{\line(1,0){12.3}\line(0,1){6}}
\put(2.3,3){\line(0,-1){0.5}\line(1,0){2}}
\put(2.3,-3){\line(0,1){5.5}}
\put(9.4,3){\line(1,0){5.2}}

\put(6.7,2.4){\line(-1,-2){1.7}}
\put(5,-1){\line(-1,0){2.7}}

\put(4.8,2.9){$\mdeg \T_3(k)$}
\put(6,-0.8){$d_2$ is divisible by $d_1$, }
\put(2.6,-2.2){or $d_3$ belongs to $\Zn d_1+\Zn d_2$}

}

\put(15,2.1){\oval(10,3)}

\put(0,-5){\line(1,0){2.5}}
\put(9.4,-5){\line(1,0){7.6}}
\put(17,4.5){\line(0,-1){9.5}}
\put(0,4.5){\line(0,-1){9.5}\line(1,0){17}}
\put(2.9,-5.3){$\mdeg (\Aut _k\kx )$}

\put(-1.5,-6){\line(1,0){23}}
\put(-1.5,-6){\line(0,1){11.5}}
\put(21.5,-6){\line(0,1){11.5}}
\put(-1.5,5.5){\line(1,0){10}}
\put(10.5,5.5){\line(1,0){11}}

\put(9.1,5.2){$\N ^3$}

}

\end{picture}}

Since many partial results were published separately, 
we consider it necessary 
to prove a strong theorem unifying them. 
The purpose of this paper 
is to interpret the generalized 
Shestakov-Umirbaev theory~\cite{SUineq}, \cite{tame3}
in terms of multidegrees. 
This naturally leads to a very general version 
of Kara\'s type theorem (Theorem~\ref{thm:main}) 
which is also valid for the case of ``weighted" degree. 
In the case of total degree, 
our result implies the following theorem.

For $d_1,d_2,d_3\in \N $, 
consider the conditions (a), (b) and (c) as follows: 

\medskip 

\nd (a) One of the following holds: 

\smallskip 

\nd (a1) $3d_2\neq 2d_3$ and $sd_1\neq 2d_3$ 
for any odd number $s\geq 3$. 

\smallskip 

\nd (a2) $d_1+d_2\leq d_3+2$.

\medskip 

\nd (b) One of the following holds: 

\smallskip 

\nd (b1) $\gcd (d_1,d_2,d_3)=\gcd (d_1,d_2)\leq 3$. 

\smallskip 

\nd (b2) $d_1+d_2+d_3\leq \lcm (d_1,d_2)+2$. 

\medskip 

\nd (c) $d_2$ is not divisible by $d_1$, 
and $d_3$ does not belong to $\Zn d_1+\Zn d_2$. 

\medskip

Then, 
we have the following theorem.

\begin{theorem}\label{thm:total degree}
Let $d_1\leq d_2\leq d_3$ be positive integers 
satisfying {\rm (a)}, {\rm (b)} and {\rm (c)}. 
Then, 
$(d_1,d_2,d_3)$ does not belong to $\mdeg \T_3(k)$. 
\end{theorem}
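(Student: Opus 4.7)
My approach is to deduce Theorem~\ref{thm:total degree} from the general weighted-degree Kara\'s type theorem (Theorem~\ref{thm:main}) promised in the introduction, by specializing the weight vector to $\w=(1,1,1)$, in which case $\mdegw$ coincides with the ordinary multidegree $\mdeg$. Since I do not yet see that weighted statement, my plan is to describe how one would derive the conclusion directly from the generalized Shestakov-Umirbaev theory of \cite{SUineq} and \cite{tame3}.

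The argument proceeds by contradiction: suppose $F=(f_1,f_2,f_3)\in \T_3(k)$ satisfies $\mdeg F=(d_1,d_2,d_3)$. Condition (c) is designed to forbid any ``elementary reduction'' of $F$, because such a reduction would replace some $f_i$ by $f_i-p(f_j,f_k)$ of strictly smaller degree, which requires an expression of $d_i$ as $u d_j+v d_k$ with $u,v\in \Zn$; under (c) together with $d_1\leq d_2\leq d_3$ this is impossible. Thus $F$ is elementarily unreducible, and the generalized SU theorem then forces $F$ to admit a more subtle Shestakov-Umirbaev reduction, of one of a small finite collection of types, each of which imposes rigid numerical constraints on $(d_1,d_2,d_3)$.

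The next step is to enumerate these constraints and match them against (a) and (b). Reductions of ``type I'' produce a rational ratio $3d_2=2d_3$ or $2d_3=sd_1$ with odd $s\geq 3$ between two of the degrees, together with a degree-gap inequality of the form $d_1+d_2>d_3+2$; these are precisely what (a1) and (a2) are designed to block. Reductions of ``type II'' instead force $\gcd(d_1,d_2)$ to divide $d_3$ and satisfy $\gcd(d_1,d_2)\geq 4$, together with $d_1+d_2+d_3>\lcm(d_1,d_2)+2$; these are blocked by (b1) and (b2). In each case one verifies that the numerical consequences of the realized SU reduction cannot coexist with (a) and (b), yielding the desired contradiction.

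The main obstacle I anticipate is the bookkeeping in this last step. Each reduction type produces a chain of weighted-degree inequalities among various polynomials constructed from $f_1,f_2,f_3$ (Shestakov--Umirbaev's inequality and its generalizations), and extracting from them the clean arithmetic alternatives (a1)--(a2) and (b1)--(b2) requires a careful case analysis that I expect mirrors the one carried out in \cite{345} and its successors, but must now be done uniformly across all reduction types. I would expect the crux to be an arithmetic lemma showing that, in the presence of (a) and (b), any realized SU reduction would force $d_3\in \Zn d_1+\Zn d_2$, contradicting (c) and closing the argument.
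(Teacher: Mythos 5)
Your opening move --- deduce the statement from Theorem~\ref{thm:main} by specializing to $\Gamma=\Z$ and $\w=(1,1,1)$ --- is exactly the paper's route: its entire proof of Theorem~\ref{thm:total degree} consists of checking that $d_1\leq d_2\leq d_3$ together with (a), (b), (c) imply the hypotheses (K1), (K2), (A), (B) of Theorem~\ref{thm:main}, using $|\w |_*=3$. You do not carry out this (short) verification, and the direct sketch you substitute for it contains a genuine error. The claim that (c) forbids every elementary reduction is false: an elementary reduction of $f_i$ requires only that the leading form $f_i^{\w }$ lie in $k[f_j,f_k]^{\w }$, not in $k[f_j^{\w },f_k^{\w }]$, and because of cancellation among leading terms the former space contains elements whose degree is \emph{not} in $\Zn d_j+\Zn d_k$. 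Handling precisely these non-trivial elementary reductions via the Shestakov--Umirbaev inequality (Lemma~\ref{lem:SU ineq}) is the content of Lemma~\ref{lem:key}(ii), and it is these cases --- not only the Shestakov--Umirbaev reductions --- that produce the ratio conditions $3d_2=2d_3$, $sd_1=2d_3$ and the $\lcm$ inequality which (a) and (b) must then exclude. So the division of labour you assign to (c) versus (a), (b) does not match what the theory forces: (c) yields (K2), the non-degeneracy hypothesis under which the SU inequality applies, while (a) and (b) must block \emph{both} reduction types.

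A second missing ingredient is the justification of the constants ``$+2$'' in (a2) and (b2). The reductions yield inequalities involving $\degw df_i\wedge df_j$ (the degree of the Poisson bracket), and to turn (a2) and (b2) into violations of those inequalities one needs the lower bound $\deg df_i\wedge df_j\geq 3$ for the relevant pairs; this is the weighted generalization of Sun--Chen's lemma (Theorem~\ref{thm:|w|*}), proved in Section~4 under (K1) and (K2) and fed into the argument through Lemma~\ref{lem:pf of main}. Your sketch never identifies this step, and without it the arithmetic in your final case analysis cannot close. Likewise, your anticipated ``crux'' --- that any realized reduction would force $d_3\in \Zn d_1+\Zn d_2$ --- is not how the contradiction is obtained; the reductions are excluded case by case by the inequalities in (a) and (b) directly.
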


We note that (a) holds if and only if 
(a1) or one of the following holds: 

\medskip 

\nd (a2$'$) $3d_2=2d_3$ and $2d_1\leq d_2+4$. 

\smallskip 

\nd (a2$''$) 
$sd_1=2d_3$ and $2d_2\leq (s-2)d_1+4$ 
for some odd number $s\geq 3$. 

\medskip 

Actually, 
(a2) is equivalent to $2d_1\leq d_2+4$ 
if $3d_2=2d_3$, 
and to $2d_2\leq (s-2)d_1+4$ 
if $sd_1=2d_3$ for some odd number $s\geq 3$. 
The condition (b1) is equivalent to 
$\gcd (d_1,d_2)\leq 3$ and $\gcd (d_1,d_2)\mid d_3$, 
which is fulfilled 
whenever $\gcd (d_1,d_2)=1$. 
By (c), 
Theorem~\ref{thm:total degree} 
is considered as a Kara\'s type theorem. 
As will be discussed in Section~\ref{sect:appl}, 
this theorem implies all the 
Kara\'s type theorems which the author knows, 
except for Kara\'s~\cite[Theorem 1]{456}. 
This result of Kara\'s can be derived 
from Theorem~\ref{thm:main} 
by assuming the lower bound for the degrees of 
``Poisson brackets" given in \cite[Theorem 14]{456}.

It is easy to see that, 
when $n\geq 3$, 
there exists $(d_1,\ldots ,d_n)\in \mdeg \T _n(k)$ 
with $d_1\leq \cdots \leq d_n$ for which 
$d_i$ does not belong to $\Zn d_1+\cdots +\Zn d_{i-1}$ 
for any $2\leq i\leq n$. 
For instance, 
take any prime numbers $p_1<\cdots <p_{n-1}$, 
and define 
$d_i=p_i^ip_{i+1}\cdots p_{n-1}$ for each $i\neq n$, 
and $d_n=(d_{n-1}-1)d_{n-2}+1$. 
Then, 
we have $d_1\leq \cdots \leq d_n$, 
and $d_i$ does not belong to $\Zn d_1+\cdots +\Zn d_{i-1}$ 
for any $2\leq i\leq n$, 
since $p_{i-1}\nmid d_i$. 
Moreover, 
$(d_1,\ldots ,d_n)$ 
is the multidegree of 
$F\in \T _n(k)$ defined by 
$f_i=x_i+x_n^{d_i}$ for each $i\neq n$, 
and $f_n=x_n+f_{n-2}^{d_{n-1}}-f_{n-1}^{d_{n-2}}$. 
Thus, 
it is also an interesting problem 
to find a good {\it sufficient} condition for elements of 
$\N ^n$ to belong to $\mdeg \T _n(k)$. 
We mention that 
the situation is more complicated 
in the case of weighted degree. 
In this case, 
a statement similar to Proposition~\ref{prop:Karas345} 
does not hold in general. 
It seems difficult to expect a simple criterion 
as in the case of total degree.

The structure of this paper is as follows. 
In Section~\ref{sect:main}, 
we state the main theorem 
after recalling basic definitions. 
Sections~\ref{sect:SUtheory}, \ref{sect:w} and 
\ref{sect:pf main} are devoted to the proof of 
the main theorem. 
In Section~\ref{sect:SUtheory}, 
we prove a key theorem using 
the generalized Shestakov-Umirbaev theory. 
In Section~\ref{sect:w}, 
we generalize Sun-Chen~\cite[Lemma 3.1]{SunChen} 
to the case of weighted degree. 
The proof of the main theorem is completed in 
Section~\ref{sect:pf main} using the results in 
Sections~\ref{sect:SUtheory} and \ref{sect:w}. 
In Section~\ref{sect:appl}, 
we prove a variety of Kara\'s type theorems 
as applications of our theorems.

\section{Main results}\label{sect:main}
\setcounter{equation}{0}

First, 
we recall basics of 
the weighted degrees 
of polynomial automorphisms and differential forms. 
Let $\Gamma $ be a 
{\it totally ordered additive group}, i.e., 
an additive group 
equipped with a total ordering such that 
$\alpha \leq \beta $ implies 
$\alpha +\gamma \leq \beta +\gamma $ 
for each $\alpha ,\beta ,\gamma \in \Gamma $. 
We denote by $\Gamma _+$ 
the set of positive elements of $\Gamma $. 
Let $\w =(w_1,\ldots ,w_n)$ 
be an $n$-tuple of elements of $\Gamma _+$. 
We define the $\w $-{\it weighted $\Gamma $-grading} 
$$\kx =\bigoplus _{\gamma \in \Gamma }\kx _{\gamma }$$ 
by setting $\kx _{\gamma }$ 
to be the $k$-vector subspace of $\kx $ 
generated by $x_1^{a_1}\cdots x_n^{a_n}$ 
for $a_1,\ldots ,a_n\in \Zn $ 
with $\sum _{i=1}^na_iw_i=\gamma $ 
for each $\gamma \in \Gamma $. 
Write $f\in \kx $ as 
$f=\sum _{\gamma \in \Gamma }f_{\gamma }$, 
where $f_{\gamma }\in \kx _{\gamma }$ 
for each $\gamma \in \Gamma $. 
Then, 
we define the $\w $-{\it degree} of $f$ by 
$$
\degw f=\max \{ \gamma \in \Gamma \mid f_{\gamma }\neq 0\} . 
$$
Here, 
the maximum of the empty set is defined to be $-\infty $. 
We define $f^{\w }=f_{\delta }$ if $\delta :=\degw f>-\infty $, 
and $f^{\w }=0$ otherwise. 
Then, 
we have 
\begin{equation}\label{eq:deg prop}
\degw fg=\degw f+\degw g\quad
\text{and}\quad (fg)^{\w }=f^{\w }g^{\w }
\end{equation}
for each $f,g\in \kx $. 
If $\Gamma =\Z $ and $\w =(1,\ldots ,1)$, 
then the $\w $-degree of $f$ 
is the same as the total degree of $f$.

We remark that, 
if $\degw f<w_i$ for $f\in \kx $ and $1\leq i\leq n$, 
then $f$ belongs to $k[\x \sm \{ x_i\} ]$. 
As a consequence of this fact, 
we obtain the following lemma.

\begin{lemma}\label{prop:deg F}
Let $\w \in (\Gamma _+)^n$ and $F\in \Aut _k\kx $ 
be such that 
$$
w_1\leq \cdots \leq w_n\quad 
\text{and}\quad 
\degw f_1\leq \cdots \leq \degw f_n. 
$$ 
Then, 
we have $\degw f_i\geq w_i$ for $i=1,\ldots ,n$. 
\end{lemma}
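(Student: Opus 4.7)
The plan is to argue by contradiction: suppose $\degw f_i < w_i$ for some $i$, and derive algebraic dependence of too many of the $f_j$'s, contradicting that $F$ is an automorphism.

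First I would exploit the remark immediately preceding the lemma, which says that any $f\in\kx$ with $\degw f<w_l$ must lie in $k[\x\sm\{x_l\}]$. Combining this with the two monotonicity hypotheses, for every $j\le i$ and every $l\ge i$ we have
$$
\degw f_j \le \degw f_i < w_i \le w_l,
$$
so $f_j\in k[\x\sm\{x_l\}]$ for all such pairs $(j,l)$. Taking the intersection over $l=i,i+1,\ldots,n$ yields
$$
f_1,\ldots,f_i \in \bigcap_{l=i}^n k[\x\sm\{x_l\}] = k[x_1,\ldots,x_{i-1}].
$$

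Next I would invoke the fact that since $F\in\Aut_k\kx$, the polynomials $f_1,\ldots,f_n$ generate $\kx$ as a $k$-algebra and therefore form a transcendence basis of $\kxr$ over $k$; in particular any $i$ of them are algebraically independent over $k$. But $k[x_1,\ldots,x_{i-1}]$ has transcendence degree $i-1$ over $k$, so it cannot contain $i$ algebraically independent elements. This contradiction forces $\degw f_i \ge w_i$ for each $i$, proving the lemma.

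There is no real obstacle here; the only subtle point is noticing that the remark can be applied for every index $l\ge i$ (not merely for $l=i$), which is what upgrades the conclusion from $f_j\in k[\x\sm\{x_i\}]$ to the stronger $f_j\in k[x_1,\ldots,x_{i-1}]$ needed to get a dimension contradiction. The monotonicity of $\w$ is used precisely at this step.
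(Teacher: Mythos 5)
Your proof is correct and follows essentially the same argument as the paper: assume $\degw f_i<w_i$, deduce $\degw f_j<w_l$ for all $j\leq i$ and $l\geq i$, conclude via the preceding remark that $f_1,\ldots ,f_i\in k[x_1,\ldots ,x_{i-1}]$, and derive a contradiction with algebraic independence. You have merely spelled out the intersection step and the transcendence-degree count more explicitly than the paper does.
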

\begin{proof}
Suppose to the contrary that $\degw f_i<w_i$ for some $1\leq i\leq n$. 
Then, we have $\degw f_j<w_l$ 
for each $1\leq j\leq i$ and $i\leq l\leq n$. 
This implies that $f_1,\ldots ,f_i$ 
belong to $k[x_1,\ldots ,x_{i-1}]$. 
Hence, 
$f_1,\ldots ,f_i$ 
are algebraically dependent over $k$, 
a contradiction. 
\end{proof}

For each $F\in \Aut _k\kx $, 
we define the $\w $-{\it degree} and $\w $-{\it multidegree} 
by 
$$
\degw F=\sum _{i=1}^n\degw f_i\quad\text{and}
\quad 
\mdegw F=(\degw f_1,\ldots ,\degw f_n), 
$$
respectively. 
For each subset $S$ of $\Aut _k\kx $, 
we define 
$$
\mdegw S=\{ \mdegw F\mid F\in S\} . 
$$
Set 
$|\w |=w_1+\cdots +w_n$. 
Then, 
we know by Lemma~\ref{prop:deg F} 
that $\degw F\geq |\w |$, 
and $\degw F=|\w |$ if and only if 
$\mdegw F=(w_{\sigma (1)},\ldots,w_{\sigma (n)})$ 
for some $\sigma \in \mathfrak{S}_n$. 
Here, 
$\mathfrak{S}_n$ denotes the symmetric group 
of $\{ 1,\ldots ,n\} $.  
Thus, 
we may assume that $\degw F>|\w |$ 
in the study of $\mdegw F$. 
We mention that $F$ is tame if $\degw F=|\w |$ 
for some $\w \in (\Gamma _+)^n$ 
by \cite[Lemma 6.1 (i)]{tame3}.

The notion of the $\w $-degrees of differential forms 
is important 
in the study of polynomial automorphisms.
Let $\Omega _{\kx /k}$ 
be the module of differentials 
of $\kx $ over $k$, 
and $\bigwedge ^r\Omega _{\kx /k}$ 
the $r$-th exterior power of the 
$\kx $-module $\Omega _{\kx /k}$ 
for $r\in \N $. 
Then, 
each $\omega \in \bigwedge ^r\Omega _{\kx /k}$ 
is uniquely expressed as 
$$
\omega =\sum _{1\leq i_1<\cdots <i_r\leq n}
f_{i_1,\ldots ,i_r}dx_{i_1}\wedge \cdots \wedge dx_{i_r},
$$
where $f_{i_1,\ldots ,i_r}\in \kx $ for each $i_1,\ldots ,i_r$. 
Here, 
$df$ denotes the differential of $f$ for each $f\in \kx $. 
We define the $\w $-{\it degree} of $\omega $ by 
\begin{equation*}
\degw \omega =\max \{ \degw 
f_{i_1,\ldots ,i_r}x_{i_1}\cdots x_{i_r}\mid 
1\leq i_1<\cdots <i_r\leq n\} . 
\end{equation*}
When $\Gamma =\Z $ and $\w =(1,\ldots ,1)$, 
we denote $\degw \omega $ simply by $\deg \omega $. 
By definition, 
we have 
$$
\degw df\wedge dg=\max 
\left\{ 
\degw \left( 
\frac{\partial f}{\partial x_i}
\frac{\partial g}{\partial x_j}
-\frac{\partial f}{\partial x_j}
\frac{\partial g}{\partial x_i}
\right) x_ix_j
\Bigm|
1\leq i<j\leq n
\right\} 
$$
for $f,g\in \kx $. 
Hence, 
$\deg df\wedge dg$ is the same as 
the degree of the 
{\it Poisson bracket} $[f,g]$ 
defined by Shestakov-Umirbaev~\cite{SU'}.

For each $F\in \Aut _k\kx $, 
we have 
$df_1\wedge \cdots \wedge df_n=cdx_1\wedge \cdots \wedge dx_n$, 
where $c\in k^{\times }$ is the Jacobian of $F$. 
Hence, we get $\degw df_1\wedge \cdots \wedge df_n=|\w |$. 
Recall that 
$h_1,\ldots ,h_r$ 
are algebraically independent over $k$ 
if and only if $dh_1\wedge \cdots \wedge dh_r$ 
is nonzero 
for each $h_1,\ldots ,h_r\in \kx $ with $r\in \N $ 
(cf.~\cite[Section 26]{Matsumura}). 
Thus, 
$f_1^{\w },\ldots ,f_n^{\w }$ 
are algebraically independent over $k$ 
if and only if $df_1^{\w }\wedge \cdots \wedge df_n^{\w }$ is nonzero, 
and hence if and only if 
\begin{equation}\label{eq:minimal autom}
|\w |=\degw df_1\wedge \cdots \wedge df_n=\degw f_1+\cdots +\degw f_n
=\degw F. 
\end{equation}

Now, assume that $n=3$. 
We consider when 
$(d_1,d_2,d_3)\in (\Gamma _+)^3$ 
does not belong to $\mdegw \T_3(k)$. 
Note that, 
if $d_1$ and $d_2$ 
are linearly dependent over $\Z $, 
then there exists unique $(u_1,u_2)\in \N ^2$ 
such that $\gcd (u_1,u_2)=1$ and $u_2d_1=u_1d_2$. 
Then, 
we can find $d\in \Gamma _+$ such that $d_i=u_id$ for $i=1,2$. 
In this notation, 
we define $\gcd (d_1,d_2)=d$ and $\lcm (d_1,d_2)=u_1u_2d$. 
By definition, 
these elements of $\Gamma _+$ 
are the same as 
the greatest common divisor 
and least common multiple if $\Gamma =\Z $. 
For $d,e\in \Gamma _+$, 
we define $\Delta _{\w }(d,e)$ 
to be the minimum among $\degw df_1\wedge df_2$ 
for $F\in \T_3(k)$ 
such that $\degw f_1=d$ and $\degw f_2=e$. 
Here, 
the minimum of the empty set is defined to be $\infty $. 
If $\Gamma =\Z $ and $\w =(1,1,1)$, 
then we have $\Delta _{\w }(d,e)<\infty $ 
for any $d,e\in \N $, 
since $(d,e,e)$ belongs to $\mdeg \T_3(k)$ 
by Proposition~\ref{prop:Karas345}. 
For $N_1,\ldots ,N_r\subset \Z $ 
and $e_1,\ldots ,e_r\in \Gamma $ 
with $r\in \N $, 
we define 
$$
N_1e_1+\cdots +N_re_r
:=\left\{ \sum _{i=1}^ru_ie_i\Bigm| 
u_i\in N_i\text{ \rm for }i=1,\ldots ,r\right\} .  
$$

With the notation above, 
consider the following conditions: 

\medskip 

\nd (K1) $d_1<d_2<d_3$ and $d_1+d_2+d_3>|\w |$. 

\smallskip 

\nd (K2) $d_2$ and $d_3$ do not belong to 
$\N d_1$ and $\Zn d_1+\Zn d_2$, respectively.

\smallskip

\nd (K3) $3d_2\neq 2d_3$ or 
$d_1+d_2<d_3+\Delta _{\w }(d_2,d_3)$. 

\smallskip 

\nd (K4) $sd_1\neq 2d_3$ 
for any odd number $s\geq 3$ 
or $d_1+d_2<d_3+\Delta _{\w }(d_1,d_3)$. 

\smallskip 

\nd (K5) $4d_1\neq 3d_2$ or 
$d_3<5\gcd (d_1,d_2)+\Delta _{\w }(2d_1,d_2)$. 

\medskip

Then, we have the following theorem, 
which will be proved in Section~\ref{sect:SUtheory} 
by means of the generalized Shestakov-Umirbaev theory.

\begin{theorem}\label{prop:main}
If $d_1,d_2,d_3\in \Gamma _+$ 
and $\w \in (\Gamma _+)^3$ satisfy {\rm (K1)} through {\rm (K4)}, 
then the following assertions hold$:$ 

\noindent{\rm (i)} 
Let $F\in \Aut _k\kx $ 
be such that $\mdegw F=(d_1,d_2,d_3)$. 
If $d_1$ and $d_2$ are linearly dependent over $\Z $, 
and $d_1$, $d_2$ and $d_3$ satisfy {\rm (K5)} and 
\begin{equation}\label{eq:prop:main}
d_1+d_2+d_3<\lcm (d_1,d_2)+\degw df_1\wedge df_2, 
\end{equation}
then $F$ does not belong to $\T_3(k)$.

\noindent{\rm (ii)} 
If $d_1$ and $d_2$ are linearly independent over $\Z $, 
then $(d_1,d_2,d_3)$ does not belong to 
$\mdegw \T_3(k)$. 
\end{theorem}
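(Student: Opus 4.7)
The plan is to argue by contradiction: assume $F\in \T_3(k)$ realizes $\mdegw F=(d_1,d_2,d_3)$ (for part (i)) or that some $F\in \T_3(k)$ realizes it (for part (ii)), and obtain a contradiction by invoking the generalized Shestakov-Umirbaev classification of ``reductions'' for tame three-variable automorphisms from \cite{SUineq} and \cite{tame3}. Condition (K1) guarantees $\degw F>|\w|$, so by \cite[Lemma 6.1 (i)]{tame3} we are in the nontrivial range where the reduction theory applies; after possibly composing $F$ with elementary automorphisms and permuting entries so that $\degw f_1\le \degw f_2\le \degw f_3$, the theory produces either an elementary reduction of $F$ or a reduction of one of a finite list of Shestakov-Umirbaev types, each characterized by a rigid arithmetic relation between $d_1$, $d_2$, $d_3$.

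The first step is to rule out elementary reductions using (K2). An elementary reduction along coordinate $l$ would replace $f_l$ by $f_l-p(f_1,\ldots,\widehat{f_l},\ldots,f_n)$ for some $p$, which forces $d_l$ to lie in the nonnegative-integer span of the remaining $d_i$'s (with strictly lower total $\w$-degree of one of the entries). Combined with the strict inequalities $d_1<d_2<d_3$ from (K1), condition (K2) exactly prohibits such a reduction for any permutation of the entries.

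The heart of the argument is to rule out the remaining Shestakov-Umirbaev type reductions, each of which imposes one of the arithmetic equalities listed in (K3), (K4), and (for part (i)) (K5): namely $3d_2=2d_3$ for reductions associated to the pair $(f_2,f_3)$, $sd_1=2d_3$ with $s\ge 3$ odd for reductions associated to $(f_1,f_3)$, and $4d_1=3d_2$ for reductions associated to $(f_1,f_2)$. When the equality fails, the corresponding reduction simply cannot occur. When it holds, the reduction theory yields a lower bound on $\degw df_i\wedge df_j$ that produces an inequality of the form $d_1+d_2\ge d_3+\degw df_i\wedge df_j$ (or, in the case of a commensurable pair $d_1,d_2$, the lcm-version $d_1+d_2+d_3\ge \lcm(d_1,d_2)+\degw df_1\wedge df_2$); the second alternative in each (K$i$) and the hypothesis (\ref{eq:prop:main}) are calibrated, via the definition of $\Delta_{\w}$, to contradict precisely these inequalities. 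For part (ii), linear independence of $d_1,d_2$ over $\Z$ automatically invalidates every one of the equalities $3d_2=2d_3$, $sd_1=2d_3$, and $4d_1=3d_2$, as well as the hypothesis of (\ref{eq:prop:main}), which is why only (K1)--(K4) are needed in that case.

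The main obstacle I anticipate is the careful bookkeeping inside the generalized Shestakov-Umirbaev classification: one must track how the reductions interact with the reordering of the $f_i$, verify that the degree hypotheses are preserved under the relevant modifications of $F$, and check that each arithmetic possibility matches one of (K3)--(K5) with the correct pair of indices. A subsidiary technical point is to pass from the inequality produced by the reduction theory to the form comparable with $\Delta_{\w}(d_i,d_j)$ in a way that respects the case $\Delta_{\w}=\infty$; once that translation is set up, the contradictions in each case should follow mechanically from (K1)--(K5) and (\ref{eq:prop:main}).
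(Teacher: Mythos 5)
Your overall architecture (contradiction, apply the generalized Shestakov--Umirbaev reduction theorem to get either an elementary reduction or an SU-type reduction, then kill each case with one of (K1)--(K5) and (\ref{eq:prop:main})) matches the paper. But there is a genuine gap in how you dispose of the elementary reductions, and it is not a bookkeeping issue: it is the hardest part of the argument. You claim that an elementary reduction along coordinate $l$ ``forces $d_l$ to lie in the nonnegative-integer span of the remaining $d_i$'s,'' so that (K2) alone prohibits it. That is false. An elementary reduction only means that $f_l^{\w}$ lies in $k[\{f_i\mid i\neq l\}]^{\w}$, i.e.\ $f_l^{\w}=h^{\w}$ for some $h\in k[f_i,f_j]$; because of cancellation of leading terms, $\degw h$ need not lie in $\Zn d_i+\Zn d_j$ (this is exactly the situation $\degw h<\degw^S h$). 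Condition (K2) only excludes the no-cancellation subcase. When cancellation occurs, one must invoke the Shestakov--Umirbaev inequality (Lemma~\ref{lem:SU ineq}), which forces a resonance $(f_i^{\w})^p\approx(f_j^{\w})^q$ together with a lower bound on $\degw h$ in terms of $\degw df_i\wedge df_j$. Working this out for each choice of $l$ (Lemma~\ref{lem:key}~(ii)) is precisely what produces the conditions $3d_2=2d_3$ (elementary reduction of the \emph{smallest}-degree component), $sd_1=2d_3$ (middle component), and the $\lcm$-inequality (\ref{eq:prop:main}) (largest component). In your proposal these arithmetic relations are attributed entirely to the SU-type reductions, so the cancellation branch of the elementary reductions is left unhandled, and without it the proof does not close.

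A secondary error: in part (ii) you assert that linear independence of $d_1$ and $d_2$ over $\Z$ invalidates $3d_2=2d_3$ and $sd_1=2d_3$. It does not --- those equalities relate $d_3$ to $d_2$ and to $d_1$ respectively, and are perfectly compatible with $d_1,d_2$ being independent; they are excluded because (K3) and (K4) are hypotheses. What linear independence actually rules out is the case $4d_1=3d_2$ coming from (K5$'$) and the case where $f_{\text{largest}}^{\w}$ reduces over the other two (which requires $d_1,d_2$ commensurable); that is the correct reason (K5) and (\ref{eq:prop:main}) are not needed in part (ii).
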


As a consequence of 
Theorem~\ref{prop:main} (i), 
we obtain the following theorem 
immediately.

\begin{theorem}\label{thm:cor}
Assume that 
$d_1,d_2,d_3\in \Gamma _+$ 
and $\w \in (\Gamma _+)^3$ satisfy 
{\rm (K1)} through {\rm (K5)}. 
If $d_1$ and $d_2$ are linearly dependent over $\Z $, 
and 
$$
d_1+d_2+d_3<\lcm (d_1,d_2)+\Delta _{\w }(d_1,d_2), 
$$
then $(d_1,d_2,d_3)$ does not belong to $\mdegw \T _3(k)$. 
\end{theorem}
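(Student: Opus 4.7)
The plan is to deduce the theorem immediately from Theorem~\ref{prop:main}~(i) by a contradiction argument. Assume $(d_1,d_2,d_3)\in \mdegw \T _3(k)$, so that there exists $F=(f_1,f_2,f_3)\in \T _3(k)$ with $\mdegw F=(d_1,d_2,d_3)$. The whole point is then to verify that this particular $F$ meets the hypotheses of Theorem~\ref{prop:main}~(i), which will force the contradiction $F\not\in \T _3(k)$.

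The only hypothesis of Theorem~\ref{prop:main}~(i) not already present in the statement of Theorem~\ref{thm:cor} is inequality~\eqref{eq:prop:main}, namely $d_1+d_2+d_3<\lcm (d_1,d_2)+\degw df_1\wedge df_2$. To produce it, I would invoke the very definition of $\Delta _{\w }(d_1,d_2)$ as the minimum of $\degw dg_1\wedge dg_2$ over all tame triples $G=(g_1,g_2,g_3)$ with $\degw g_1=d_1$ and $\degw g_2=d_2$. Since $F$ itself is such a triple, we have
$$
\degw df_1\wedge df_2\ \geq\ \Delta _{\w }(d_1,d_2),
$$
and combining this with the standing hypothesis $d_1+d_2+d_3<\lcm (d_1,d_2)+\Delta _{\w }(d_1,d_2)$ yields exactly \eqref{eq:prop:main}. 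The remaining hypotheses of Theorem~\ref{prop:main}~(i)---conditions (K1) through (K5) and the linear dependence of $d_1,d_2$ over $\Z $---are assumed outright in Theorem~\ref{thm:cor}.

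There is no serious obstacle: the proof is essentially an unpacking of notation followed by a single application of Theorem~\ref{prop:main}~(i). The only point worth underlining is that the quantity $\Delta _{\w }(d_1,d_2)$ is chosen so as to bound from below exactly the differential-form degree $\degw df_1\wedge df_2$ that appears in \eqref{eq:prop:main}; this alignment of indices (as opposed to, say, $\Delta _{\w }(d_1,d_3)$ or $\Delta _{\w }(d_2,d_3)$, both of which show up elsewhere in (K3)--(K5)) is what makes the reduction work and gives the uniform, instance-independent hypothesis in Theorem~\ref{thm:cor}.
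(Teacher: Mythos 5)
Your proposal is correct and is essentially identical to the paper's own argument: the paper likewise takes a tame $F$ with the given multidegree, uses $\Delta _{\w }(d_1,d_2)\leq \degw df_1\wedge df_2$ (immediate from the definition of $\Delta _{\w }$) to verify inequality (\ref{eq:prop:main}), and concludes by Theorem~\ref{prop:main}~(i). No differences worth noting.
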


Actually, 
if there exists $F\in \T _3(k)$ 
such that $\mdegw F=(d_1,d_2,d_3)$ 
under the assumption of Theorem~\ref{thm:cor}, 
then $F$ satisfies (\ref{eq:prop:main}), 
since 
$$
d_1+d_2+d_3<\lcm (d_1,d_2)+\Delta _{\w }(d_1,d_2)
\leq \lcm (d_1,d_2)+\degw df_1\wedge df_2. 
$$
Hence, 
$F$ does not belong to $\T _3(k)$ 
by Theorem~\ref{prop:main} (i), 
a contradiction.

Theorem~\ref{prop:main} (ii) 
implies the following useful criterion 
used in \cite{wild3} 
to prove many interesting theorems.

\begin{theorem}[cf.~{\cite[Section 5]{wild3}}]
\label{thm:indep degree}
Let $\w \in (\Gamma _+)^3$ 
be such that $w_1$, $w_2$ and $w_3$ 
are linearly independent over $\Z $. 
If $d_1,d_2,d_3\in \Gamma _+$ 
satisfy the following two conditions, 
then $(d_1,d_2,d_3)$ 
does not belong to $\mdegw \T _3(k)$$:$

\noindent{\rm (1)} 
$d_1$, $d_2$ and $d_3$ 
are linearly dependent over $\Z $, 
and are pairwise linearly independent over $\Z $. 

\noindent{\rm (2)} 
$d_i$ does not belong to $\Zn d_j+\Zn d_l$ 
for $(i,j,l)=(1,2,3),(2,3,1),(3,1,2)$. 
\end{theorem}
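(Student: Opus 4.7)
The plan is to reduce the statement directly to Theorem~\ref{prop:main}~(ii). First I would observe that $\mdegw \T_3(k)$ is invariant under permutation of the three coordinates, since permuting the variables is a tame operation. Hence, after possibly composing with such a permutation, I may assume $d_1\leq d_2\leq d_3$; the pairwise $\Z$-linear independence in hypothesis (1) forces these to be strict, so $d_1<d_2<d_3$.

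Next I would suppose for contradiction that $(d_1,d_2,d_3)\in \mdegw \T_3(k)$ and pick $F\in \T_3(k)$ realizing this multidegree (with the $\degw f_i$ in the same order, again after permuting coordinates). I now check conditions (K1)--(K4). For (K1), Lemma~\ref{prop:deg F} (applied after sorting $\w$ into increasing order, which leaves $|\w|$ unchanged) gives $d_1+d_2+d_3\geq |\w|$; if equality held, then $\mdegw F$ would be a permutation of $(w_1,w_2,w_3)$, but since $w_1,w_2,w_3$ are $\Z$-linearly independent this would contradict the $\Z$-linear dependence of $d_1,d_2,d_3$ asserted in (1). Hence $d_1+d_2+d_3>|\w|$ and (K1) holds.

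Condition (K2) is immediate from the hypotheses: hypothesis (1) makes $d_1,d_2$ linearly independent over $\Z$, so $d_2\notin \N d_1$, and $d_3\notin \Zn d_1+\Zn d_2$ by hypothesis (2). For (K3) and (K4), any equality $3d_2=2d_3$ or $sd_1=2d_3$ with $s\geq 3$ odd is a nontrivial $\Z$-linear relation between two of the $d_i$'s, which is ruled out by the pairwise $\Z$-linear independence in (1); so the first alternative in each of (K3) and (K4) holds automatically.

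Since hypothesis (1) also provides the $\Z$-linear independence of $d_1$ and $d_2$ required by Theorem~\ref{prop:main}~(ii), that theorem applies and yields $(d_1,d_2,d_3)\notin \mdegw \T_3(k)$, contradicting the choice of $F$. The argument is essentially a verification that the hypotheses of Theorem~\ref{prop:main}~(ii) are forced on us by (1) and (2); the only step requiring any thought is the strict inequality in (K1), where the $\Z$-linear independence of $w_1,w_2,w_3$ is used precisely to exclude the degenerate case in which $\mdegw F$ would be a permutation of $\w$.
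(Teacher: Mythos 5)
Your proof is correct and follows essentially the same route as the paper: reduce to Theorem~\ref{prop:main}~(ii) by verifying (K1)--(K4) from hypotheses (1) and (2), which is exactly the verification the paper sketches after the theorem statement. The only (harmless) variation is in (K1): the paper argues via the fact that the $f_i^{\w}$ are monomials and deduces $\degw F>|\w|$ from the equivalence with algebraic dependence, whereas you use the equality case of the remark after Lemma~\ref{prop:deg F} directly; both are valid.
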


To check this theorem, 
we may assume that 
$(d_1,d_2,d_3)=\degw F$ for some $F\in \Aut _k\kx $. 
Then, 
$f_1^{\w }$, $f_2^{\w }$ and $f_3^{\w }$ are monomials, 
since $w_1$, $w_2$ and $w_3$ 
are linearly independent over $\Z $. 
Moreover, 
$d_1$, $d_2$ and $d_3$ are linearly dependent 
over $\Z $ if and only if 
$f_1^{\w }$, $f_2^{\w }$ and $f_3^{\w }$ 
are algebraically dependent over $k$, 
and hence if and only if 
$d_1+d_2+d_3=\degw F>|\w |$ 
by (\ref{eq:minimal autom}) 
and the preceding discussion. 
Using this remark, 
we can easily check that 
$d_{\sigma (1)}$, $d_{\sigma (2)}$ 
and $d_{\sigma (3)}$ satisfy 
(K1) through (K4) 
for some $\sigma \in \mathfrak{S}_3$ 
if (1) and (2) are satisfied.

Since $\Delta _{\w }(d,e)$'s 
are not easy to estimate in general, 
Theorems~\ref{prop:main} and \ref{thm:cor} 
are not suitable for practical use. 
So we derive a theorem which is useful in applications. 
Recall that $\Zn e_1+\cdots +\Zn e_r$ 
is a well-ordered subset of $\Gamma $ 
for each $e_1,\ldots ,e_r\in \Gamma _+$ 
with $r\in \N $ (cf.~\cite[Lemma 6.1 (ii)]{tame3}). 
Hence, 
we can define 
$$
|\w |_*:=\min (\{ 
\gamma \in \N w_{\sigma (1)}+\N w_{\sigma (2)}\mid 
\gamma >w_{\sigma (1)}+w_{\sigma (3)}\} 
\cup \{ 2w_{\sigma (1)}+w_{\sigma (3)}\} ), 
$$
where $\sigma \in \mathfrak{S}_3$ is such that 
$w_{\sigma (1)}\leq w_{\sigma (2)}\leq w_{\sigma (3)}$.

Consider the following conditions:

\medskip

\nd (A) One of the following holds:

\smallskip

\nd (A1) 
$3d_2\neq 2d_3$ and $sd_1\neq 2d_3$ 
for any odd number $s\geq 3$.

\smallskip

\nd (A2) $3d_2=2d_3$ and 
$d_1+d_2<d_3+\max \{ \Delta _{\w }(d_2,d_3),|\w |_*\} $.

\smallskip

\nd (A3) $sd_1=2d_3$ for some odd number $s\geq 3$ and 
$$
d_1+d_2<d_3+\max \{ \Delta _{\w }(d_1,d_3),|\w |_*\} . 
$$

\medskip

\nd (B) 
$d_1$ and $d_2$ are linearly independent over $\Z $, 
or $d_1$ and $d_2$ are linearly dependent over $\Z $ 
and one of the following holds:

\smallskip

\nd (B1) $\gcd (d_1,d_2)\leq |\w |_*$ 
and $d_3$ belongs to $\N \gcd (d_1,d_2)$.

\smallskip

\nd (B2) $d_1+d_2+d_3<\lcm (d_1,d_2)+|\w |_*$.

\medskip

The following is our main theorem.

\begin{theorem}\label{thm:main}
If $d_1,d_2,d_3\in \Gamma _+$ and 
$\w \in (\Gamma _+)^3$ 
satisfy 
{\rm (K1)}, {\rm (K2)}, {\rm (A)} and {\rm (B)}, 
then $(d_1,d_2,d_3)$ does not belong to $\mdegw \T_3(k)$. 
\end{theorem}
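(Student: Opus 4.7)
The plan is to argue by contradiction: suppose $F = (f_1, f_2, f_3) \in \T_3(k)$ realizes $\mdegw F = (d_1, d_2, d_3)$, and aim to invoke either Theorem~\ref{prop:main}~(ii) or Theorem~\ref{thm:cor} to obtain a contradiction. The technical backbone, supplied by the generalized Sun-Chen lemma of Section~\ref{sect:w}, is the uniform lower bound $\Delta_{\w}(d, e) \geq |\w|_*$ valid for all $d, e \in \Gamma_+$ (with the convention $\infty \geq |\w|_*$).

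Using this bound, I would first show that (A) implies (K3) and (K4). Case (A1) is immediate, since both (K3) and (K4) then hold in their first clauses. In case (A2), the hypothesis $3d_2 = 2d_3$ combines with $\Delta_{\w}(d_2, d_3) \geq |\w|_*$ to collapse $\max\{\Delta_{\w}(d_2, d_3), |\w|_*\}$ to $\Delta_{\w}(d_2, d_3)$, so that the inequality in (A2) becomes exactly the second clause of (K3); the symmetric argument converts (A3) into (K4).

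Next I would split on condition (B). If $d_1$ and $d_2$ are linearly independent over $\Z$, then Theorem~\ref{prop:main}~(ii) applied with the verified (K1)--(K4) yields a direct contradiction. Otherwise, write $d_1 = u_1 e$ and $d_2 = u_2 e$ with $e = \gcd(d_1, d_2)$ and $\gcd(u_1, u_2) = 1$; (K2) rules out $u_1 = 1$ and (K1) then forces $2 \leq u_1 < u_2$. To apply Theorem~\ref{thm:cor}, I must verify (K5) together with the inequality $d_1 + d_2 + d_3 < \lcm(d_1, d_2) + \Delta_{\w}(d_1, d_2)$. For (K5) only the case $4d_1 = 3d_2$, i.e.\ $(u_1, u_2) = (3, 4)$, is non-trivial: under (B1), the constraint $d_3 \in \N e$ combined with (K1) and (K2) pins down $d_3 = 5e$, after which (K5) is immediate from $\Delta_{\w}(2d_1, d_2) \geq |\w|_* > 0$; under (B2), the bound $d_3 < 5e + |\w|_*$ yields (K5) via $|\w|_* \leq \Delta_{\w}(2d_1, d_2)$. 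For the $\lcm$ inequality, (B2) gives it at once from $|\w|_* \leq \Delta_{\w}(d_1, d_2)$; under (B1) the Sylvester--Frobenius bound forces $d_3 = m e$ with $m \leq u_1 u_2 - u_1 - u_2$, hence $d_1 + d_2 + d_3 \leq \lcm(d_1, d_2) < \lcm(d_1, d_2) + \Delta_{\w}(d_1, d_2)$.

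The principal obstacle is packaging the generalized Sun-Chen bound $\Delta_{\w}(d, e) \geq |\w|_*$ in the form actually needed, which is the content of Section~\ref{sect:w}; once that lemma is in hand, the remainder of the argument is a clean case analysis that slots (A1)--(A3) and (B1), (B2) into Theorems~\ref{prop:main} and \ref{thm:cor}, together with elementary Frobenius-number arithmetic to handle the distinguished subcase $4d_1 = 3d_2$ inside (B1).
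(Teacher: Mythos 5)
Your overall architecture (argue by contradiction and feed (K1)--(K5) plus the $\lcm$ inequality into Theorem~\ref{prop:main}/Theorem~\ref{thm:cor}) is the same as the paper's, but the foundation you build it on is false. Theorem~\ref{thm:|w|*} does \emph{not} give $\Delta _{\w }(d,e)\geq |\w |_*$ ``for all $d,e\in \Gamma _+$'': it requires that $d$ or $e$ lie outside $\{ w_1,w_2,w_3\} $ and that neither of $d,e$ belong to $\N e$, $\N d$ respectively. The bound genuinely fails without these hypotheses: for $\Gamma =\Z $, $\w =(1,1,1)$ and the elementary automorphism $F=(x_1,x_2+x_1^2,x_3)$ one has $df_1\wedge df_2=dx_1\wedge dx_2$, so $\Delta _{\w }(1,2)\leq 2<3=|\w |_*$. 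The non-divisibility hypotheses are cheap consequences of (K1) and (K2) for the pairs you use, but the ``not a weight'' hypothesis is not: the paper devotes the bulk of Lemma~\ref{lem:pf of main} (part (i), using Proposition~\ref{prop:vdk} and an explicit change of variables) to showing that (K1) and (K2) force at least two of $d_1,d_2,d_3$ to lie outside $\{ w_1,w_2,w_3\} $, and a further separate argument inside part (iv) to verify the hypothesis for the pair $(2d_1,d_2)$ needed for (K5). None of this is ``packaging''; it is the substantive content of the reduction, and your proposal omits it entirely.

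There is a second, independent gap in your derivation of (K3) and (K4) from (A). Condition (A) only guarantees \emph{one} of (A1), (A2), (A3). If $d_1$ and $d_2$ are linearly dependent it can happen that $3d_2=2d_3$ \emph{and} $sd_1=2d_3$ for some odd $s\geq 3$ simultaneously; then (A2) alone yields only (K3) and says nothing about (K4) (and symmetrically for (A3)). Your case split ``(A2) gives (K3); the symmetric argument converts (A3) into (K4)'' does not produce both conditions in this overlap. The paper handles it by showing that in this situation $\lcm (d_1,d_2)=2d_3$ (using (K2) to rule out $3\mid s$) and then extracting both inequalities from (B2); some such additional argument is required. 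Your treatment of (B1) via Sylvester--Frobenius and of (K5) is essentially sound, but as written the proof does not go through without repairing these two points.
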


We prove this theorem in Section~\ref{sect:pf main} 
using Theorem~\ref{prop:main} and a weighted version of 
Sun-Chen's lemma (Theorem~\ref{thm:|w|*}).

To end this section, 
we show Theorem~\ref{thm:total degree} 
by assuming Theorem~\ref{thm:main}. 
Take any $d_1,d_2,d_3\in \N $ with 
$d_1\leq d_2\leq d_3$ which satisfy 
(a), (b) and (c). 
It suffices to check that 
(K1), (K2), (A) and (B) hold 
for $\Gamma =\Z $ and $\w =(1,1,1)$. 
Since $d_1\leq d_2\leq d_3$, 
we have $d_1<d_2<d_3$ by (c). 
Since $d_1\geq 1$, 
it follows that $d_1+d_2+d_3>3=|\w |$, 
proving (K1). 
Clearly, 
(c) implies (K2). 
If (a1) holds, 
then we have (A1). 
Assume that (a1) does not hold. 
Then, 
we have $3d_2=2d_3$ or 
$sd_1=2d_3$ for some odd number $s\geq 3$, 
and (a2) is satisfied. 
Since $|\w |_*=3$, 
it follows that 
$$
d_1+d_2\leq d_3+2<d_3+|\w |_*. 
$$
Thus, we get (A2) or (A3). 
Since $|\w |_*=3$, 
(b1) and (b2) imply (B1) and (B2), 
respectively. 
Therefore, 
$d_1$, $d_2$ and $d_3$ satisfy 
(K1), (K2), (A) and (B).

\section{Generalized Shestakov-Umirbaev theory}
\label{sect:SUtheory}
\setcounter{equation}{0}

The goal of this section is to prove 
Theorem~\ref{prop:main}. 
First, 
we briefly recall the generalized 
Shestakov-Umirbaev theory 
\cite{SUineq}, \cite{tame3}. 
For the moment, 
$n\in \N $ may be arbitrary. 
Let $\w $ be an element of $(\Gamma _+)^n$, 
and let $F=(f_1,f_2,f_3)$ and $G=(g_1,g_2,g_3)$ 
be triples of elements of $\kx $ 
such that $f_1$, $f_2$, $f_3$ and $g_1$, $g_2$, $g_3$ 
are algebraically independent over $k$, 
respectively. 
We say that the pair $(F,G)$ satisfies the 
{\it Shestakov-Umirbaev condition} for the weight $\w $ 
if the following conditions hold (cf.~\cite{tame3}): 

\medskip

\noindent
(SU1) $g_1=f_1+af_3^2+cf_3$ and $g_2=f_2+bf_3$ 
for some $a,b,c\in k$, 
and $g_3-f_3$ belongs to $k[g_1,g_2]$; 

\smallskip 

\noindent
(SU2) $\deg _{\w }f_1\leq \deg _{\w }g_1$ 
and $\deg _{\w }f_2=\deg _{\w }g_2$; 

\smallskip 

\noindent
(SU3) $(g_1^{\w })^2\approx (g_2^{\w })^s$ 
for some odd number $s\geq 3$; 

\smallskip 

\noindent
(SU4) $\deg _{\w }f_3\leq \deg _{\w }g_1$, 
and $f_3^{\w }$ does not belong to 
$k[g_1^{\w }, g_2^{\w }]$; 

\smallskip

\noindent
(SU5) 
$\deg _{\w }g_3<\deg _{\w }f_3$; 

\smallskip 

\noindent
(SU6) 
$\deg _{\w }g_3<\deg _{\w }g_1-\deg _{\w }g_2 
+\deg _{\w }dg_1\wedge dg_2$. 

\medskip

Here, 
$h_1\approx h_2$ 
(resp.\ $h_1\not\approx h_2$) denotes that 
$h_1$ and $h_2$ are linearly dependent 
(resp.\ linearly independent) over $k$ 
for each $h_1,h_2\in \kx \sm \zs $. 
We say that $(F,G)$ 
satisfies the {\it weak Shestakov-Umirbaev condition} 
for the weight $\w $ if (SU4), (SU5), (SU6) and 
the following conditions are satisfied (cf.~\cite{tame3}):

\medskip 

\noindent
(SU$1'$) $g_1-f_1$, 
$g_2-f_2$ and $g_3-f_3$ belong to $k[f_2,f_3]$, 
$k[f_3]$ and $k[g_1,g_2]$, respectively; 

\smallskip 

\noindent
(SU$2'$) $\deg f_i\leq \deg g_i$ for $i=1,2$; 

\smallskip 

\noindent
(SU$3'$) $\deg g_2<\deg g_1$, and 
$g_1^{\w }$ does not belong to $k[g_2^{\w }]$. 

\medskip 

\noindent
It is easy to check that 
(SU1), (SU2) and (SU3) imply 
(SU$1'$), (SU$2'$) and (SU$3'$), 
respectively. 
Hence, 
the Shestakov-Umirbaev condition 
implies the weak Shestakov-Umirbaev condition. 
If $(F,G)$ satisfies the weak Shestakov-Umirbaev condition 
for the weight $\w $, 
then $(F,G)$ has the following properties 
due to~\cite[Theorem 4.2]{tame3}. 
Here, 
we regard $\Gamma $ as a subgroup of $\Q \otimes _{\Z }\Gamma $ 
which has a structure of totally ordered additive group 
induced from $\Gamma $. 
We note that $\delta $ in (P1) 
is equal to $\gcd (\degw g_1,\degw g_2)$ in our notation.

\medskip 

\noindent
{\rm (P1)} $(g_1^{\w })^2\approx (g_2^{\w })^s$ 
for some odd number $s\geq 3$, 
and so $\delta :=(1/2)\degw g_2$ belongs to $\Gamma $. 

\smallskip 

\noindent
{\rm (P2)} $\degw f_3\geq (s-2)\delta +\degw dg_1\wedge dg_2$. 

\smallskip 

\noindent
{\rm (P5)} If $\degw f_1<\degw g_1$, 
then $s=3$, $g_1^{\w }\approx (f_3^{\w })^2$, 
$\degw f_3=(3/2)\delta $ and 
$$
\degw f_1\geq \frac{5}{2}\delta +\degw dg_1\wedge dg_2. 
$$

\smallskip 

\noindent
{\rm (P7)} $\degw f_2<\degw f_1$, $\degw f_3\leq \degw f_1$, 
and $\delta <\degw f_i\leq s\delta $ for $i=1,2,3$. 

\medskip

\noindent
{\rm (P11)} If $\degw f_3\leq \degw f_2$, 
then $s=3$. 

\medskip

We say that $F\in \Aut _k\kx $ 
admits an {\it elementary reduction} 
for the weight $\w $ 
if there exists an elementary automorphism $E$ 
such that $\degw F\circ E<\degw F$. 
For a $k$-vector subspace $V$ of $\kx $, 
we define $V^{\w }$ to be the $k$-vector subspace 
of $\kx $ generated by $\{ f^{\w }\mid f\in V\} $. 
Then, 
$F\in \Aut _k\kx $ admits an elementary reduction 
for the weight $\w $ 
if and only if 
$f_l^{\w }$ belongs to $k[\{ f_i\mid i\neq l\} ]^{\w }$ 
for some $1\leq l\leq n$.

Now, 
assume that $n=3$. 
For $\sigma \in \mathfrak{S}_3$ 
and $F\in \Aut _k\kx $, 
we define 
$F_{\sigma }=
(f_{\sigma (1)},f_{\sigma (2)},f_{\sigma (3)})$. 
We say that $F$ admits a 
{\it Shestakov-Umirbaev reduction} 
for the weight $\w$ 
if there exist $G\in \Aut _k\kx $ 
and $\sigma \in \mathfrak{S}_3$ 
such that $(F_{\sigma },G_{\sigma })$ 
satisfies the Shestakov-Umirbaev condition 
for the weight $\w $.

The following theorem is a generalization of 
the main result of Shestakov-Umirbaev~\cite{SU}.

\begin{theorem}[{\cite[Theorem 2.1]{tame3}}]\label{thm:SU}
If $\degw F>|\w |$ holds for 
$F\in \T_3(k)$ and $\w \in (\Gamma_+)^3$, 
then $F$ admits an elementary reduction 
or a Shestakov-Umirbaev reduction for the weight $\w $. 
\end{theorem}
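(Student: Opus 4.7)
The plan is to mimic the classical argument of Shestakov-Umirbaev \cite{SU}, adapted to the $\Gamma$-valued weighted setting. I would proceed by induction on $\degw F$, which is well-founded because $\Zn w_1+\Zn w_2+\Zn w_3$ is a well-ordered subset of $\Gamma$ (\cite[Lemma 6.1 (ii)]{tame3}). The hypothesis $\degw F>|\w|$ together with Lemma~\ref{prop:deg F} and (\ref{eq:minimal autom}) forces $f_1^{\w},f_2^{\w},f_3^{\w}$ to be algebraically dependent over $k$, so some nontrivial polynomial relation holds among the leading forms, and there is hope of reducing $F$.

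If one of the $f_l^{\w}$ happens to lie in $k[\{f_i\mid i\neq l\}]^{\w}$, then $F$ admits an elementary reduction by the characterization recalled just before the statement of Theorem~\ref{thm:SU}, and we are done. The substantive case is when no elementary reduction exists; here I would construct an auxiliary automorphism $G=(g_1,g_2,g_3)\in\Aut_k\kx$ together with a permutation $\sigma\in\mathfrak{S}_3$ so that $(F_{\sigma},G_{\sigma})$ satisfies (SU1)--(SU6). The idea is to exploit tameness of $F$: writing $F$ as a minimal product of elementary automorphisms and tracking how the leading forms evolve, the first factor that cannot simply be peeled off as an elementary reduction should expose the resonance $(g_1^{\w})^2\approx (g_2^{\w})^s$ (for some odd $s\geq 3$) demanded by (SU3), and it should suggest the correct $g_3$ as well. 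The weighted Shestakov-Umirbaev inequality of \cite{SUineq} then supplies the lower bound on $\degw f_3$ needed for (SU4) and the upper bound on $\degw g_3$ needed for (SU6).

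The main obstacle, and the heart of the argument, is to prove that in the absence of an elementary reduction one can actually produce $G$ satisfying \emph{all} of (SU1)--(SU6) simultaneously. This comes down to an intricate case analysis governed by the algebraic dependence type of $f_1^{\w},f_2^{\w},f_3^{\w}$, combined with delicate estimates on $\degw df_i\wedge df_j$ drawn from the weighted SU inequality; this is the portion of \cite{tame3} that I would have to import essentially wholesale. Once that analysis is in place, passing from integer-valued total degrees to $\Gamma$-valued weighted degrees is largely bookkeeping, since every minimality argument that uses well-ordering of $\N$ or of finitely generated submonoids of $\N$ extends verbatim via the well-ordering of $\Zn w_1+\Zn w_2+\Zn w_3$.
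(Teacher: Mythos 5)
The paper does not prove Theorem~\ref{thm:SU} at all: it is imported verbatim from \cite[Theorem 2.1]{tame3} and used as a black box, so the ``proof'' you are implicitly being compared against is just the citation. Read as a proof, your proposal has a genuine gap exactly where all of the mathematical content lives. The parts you do carry out are correct but routine: well-foundedness of the induction via the well-ordering of $\Zn w_1+\Zn w_2+\Zn w_3$, algebraic dependence of $f_1^{\w},f_2^{\w},f_3^{\w}$ when $\degw F>|\w |$ via (\ref{eq:minimal autom}), and the observation that $F$ admits an elementary reduction precisely when some $f_l^{\w }$ lies in $k[\{ f_i\mid i\neq l\} ]^{\w }$. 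For the substantive case you only assert that one ``should'' be able to produce $G$ and $\sigma$ satisfying (SU1)--(SU6), and you explicitly defer that entire construction to \cite{tame3}. That construction \emph{is} the theorem; without it nothing has been established.

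Moreover, the one concrete mechanism you propose --- writing $F$ as a minimal word in elementary automorphisms and examining ``the first factor that cannot be peeled off'' --- is not how the argument of \cite{SU} or \cite{tame3} actually runs, and it meets the standard obstruction head-on: in a tame factorization the $\w$-degree need not decrease monotonically, so the elementary factors of a shortest word for $F$ bear no direct relation to reductions of $\degw F$. The genuine proof is an induction on $\degw F$ over all tame automorphisms in which one classifies, via the weighted degree estimates for $\degw df\wedge dg$ (the machinery behind Lemma~\ref{lem:SU ineq} and its refinements in \cite{SUineq}, \cite{tame3}), the ways a non-elementary reduction can occur; that classification is where the resonance $(g_1^{\w })^2\approx (g_2^{\w })^s$ and the conditions (SU4)--(SU6) emerge, together with the elimination of the problematic ``type IV'' configurations. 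Citing \cite{tame3} is the right move in the context of this paper, but as a standalone blind proof your text supplies only the framing, not the argument.
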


We also use the following 
version of the Shestakov-Umirbaev inequality 
(see~\cite[Section 3]{tame3} for detail). 
Let $S=\{ f,g\} $ 
be a subset of $\kx $ 
such that $f$ and $g$ are 
algebraically independent over $k$, 
and $h$ a nonzero element of $k[S]$. 
Then, we can uniquely express 
$h=\sum _{i,j}c_{i,j}f^ig^j$, 
where $c_{i,j}\in k$ for each $i,j\in \Zn $. 
We define $\degw ^Sh$ to be the maximum among 
$\degw f^ig^j$ for $i,j\in \Zn $ with $c_{i,j}\neq 0$. 
We note that, 
if $h^{\w }$ does not belong to $k[f^{\w },g^{\w }]$, 
or if $\degw h<\degw g$ and $h^{\w }$ 
does not belong to $k[f^{\w }]$, 
then $\degw h<\degw ^Sh$.

With the notation and assumption above, 
the following lemma holds 
(see \cite[Lemmas 3.2 (i) and 3.3 (ii)]{tame3} 
for the proof).

\begin{lemma}\label{lem:SU ineq}
If $\degw h<\degw ^Sh$, 
then there exist $p,q\in \N $ with $\gcd (p,q)=1$ 
such that $(g^{\w })^p\approx (f^{\w })^q$ and  
$$
\degw h\geq q\degw f-\degw f-\degw g+\degw df\wedge dg. 
$$ 

Assume further that $\degw f<\degw g$ 
and $g^{\w }$ does not belong to $k[f^{\w }]$. 
If $\degw h\leq \degw g$, 
then we have $p=2$, 
and $q$ is an odd number with $q\geq 3$. 
If $\degw h\leq \degw f$, 
then we have $q=3$. 
\end{lemma}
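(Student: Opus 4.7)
The plan is to first exploit $\degw h<\degw ^Sh$ to extract an algebraic dependence between the leading $\w $-forms of $f$ and $g$, then prove the central inequality by a Jacobian computation, and finally pin down the admissible pairs $(p,q)$ using the size bounds on $\degw h$.

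Writing $h=\sum _{i,j}c_{i,j}f^ig^j$ and setting $\gamma :=\degw ^Sh$, the ``leading homogeneous part''
$$
R:=\sum _{i\degw f+j\degw g=\gamma }c_{i,j}(f^{\w})^i(g^{\w})^j
$$
must vanish, for otherwise $R=h^{\w}$ and $\degw h=\gamma $, contrary to hypothesis. Since $f^{\w}$ and $g^{\w}$ are nonzero and at least two pairs contribute to $R$, any two contributing pairs $(i_1,j_1)\neq (i_2,j_2)$ give $(i_1-i_2)\degw f=(j_2-j_1)\degw g$, so $\degw f$ and $\degw g$ are linearly dependent over $\Z $. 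Taking this dependence in lowest terms yields unique $p,q\in \N $ with $\gcd (p,q)=1$ and $p\degw g=q\degw f$, and the vanishing of $R$ then reads off $(g^{\w})^p\approx (f^{\w})^q$.

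For the main inequality, I would argue by induction on the number of pairs $(i,j)$ contributing to weight $\gamma $, at each step using $(g^{\w})^p\approx (f^{\w})^q$ to rewrite the highest remaining monomial and reduce that count. The base case is a Jacobian computation: regarding $h\in k[f,g]$, the chain rule gives $dh\wedge df=(\partial h/\partial g)\,dg\wedge df$, and from $\degw \omega \wedge \eta \leq \degw \omega +\degw \eta $ together with $\degw dh\leq \degw h$ one gets $\degw (dh\wedge df)\leq \degw h+\degw f$. On the other hand, the monomial relation forces $\degw (\partial h/\partial g)\geq q\degw f-\degw g$, and a direct check shows that its leading $\w $-form does not cancel against that of $dg\wedge df$, so $\degw (dh\wedge df)=\degw (\partial h/\partial g)+\degw (df\wedge dg)$. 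Combining the two yields $\degw h\geq q\degw f-\degw f-\degw g+\degw df\wedge dg$, as required.

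For the refined assertions, assume $\degw f<\degw g$ and $g^{\w}\notin k[f^{\w}]$. Then $p<q$ since $p\degw g=q\degw f$ with positive degrees, and $p\geq 2$ because $p=1$ would give $g^{\w}\approx (f^{\w})^q\in k[f^{\w}]$. Under the additional bound $\degw h\leq \degw g$, substituting $\degw f=p\delta $ and $\degw g=q\delta $ (for $\delta :=\degw f/p$ in $\Q \otimes _{\Z }\Gamma $) into the main inequality and tracking more carefully the degree in $g$ of the highest surviving monomial forces $p=2$; then $\gcd (p,q)=1$ makes $q$ odd, and $p<q$ gives $q\geq 3$. A parallel argument under the tighter bound $\degw h\leq \degw f$ pins $q=3$. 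The main obstacle throughout is the Jacobian step of the previous paragraph: ensuring that the inductive reduction preserves the noncancellation used in the equality $\degw (dh\wedge df)=\degw (\partial h/\partial g)+\degw (df\wedge dg)$ is delicate, and this is precisely where the Shestakov--Umirbaev machinery of \cite{SUineq}, \cite{tame3} is needed.
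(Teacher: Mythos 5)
First, note that the paper does not actually prove this lemma: it is quoted verbatim with a pointer to \cite[Lemmas 3.2 (i) and 3.3 (ii)]{tame3}, so there is no in-paper argument to match your proposal against. Judged on its own terms, your first step (vanishing of the top form $R$, hence at least two contributing exponent pairs, hence $p\degw g=q\degw f$ and, via irreducibility of $X^q-cY^p$ for $\gcd(p,q)=1$, the relation $(g^{\w})^p\approx (f^{\w})^q$) is sound, and your last step is also recoverable: substituting $\degw f=p\delta$, $\degw g=q\delta$ into the main inequality and using $\degw df\wedge dg>0$ gives $p(q-1)<2q$ when $\degw h\leq \degw g$, forcing $p=2$ and $q$ odd $\geq 3$, and $q(p-1)<2p$ when $\degw h\leq \degw f$, forcing $q=3$.

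The genuine gap is in the central inequality, which is precisely the Shestakov--Umirbaev inequality. Write $R=W^mU$ with $W=X^q-cY^p$ irreducible and $W\nmid U$. Your base-case claim $\degw (\partial h/\partial g)\geq q\degw f-\degw g$ is exactly the statement that the top form of $\partial h/\partial g$, namely $(\partial_YR)(f^{\w},g^{\w})$, does not vanish; this holds if and only if $m=1$, and fails whenever $m\geq 2$. Your proposed repair --- induction on the number of contributing pairs, ``rewriting the highest remaining monomial'' via $(g^{\w})^p\approx(f^{\w})^q$ --- is not a legitimate operation, because that relation holds only between the leading forms and cannot be used to rewrite $h$ itself inside $k[f,g]$. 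The correct induction (this is what \cite{SUineq} and \cite{tame3} do) is on the multiplicity $m$: iterate the identity $dh\wedge df=(\partial h/\partial g)\,dg\wedge df$, each application of $\partial/\partial g$ lowering $\degw ^S$ by $\degw g$ and costing at most $\degw f-\degw df\wedge dg$ in the passage from $\degw \partial h/\partial g$ back to $\degw h$, until the top form of $\partial ^m h/\partial g^m$ survives; one then uses $\degw ^Sh\geq mq\degw f$ and $\degw h\geq 0$ to recover the stated bound for all $m$. A further small miscalibration: the equality $\degw \bigl((\partial h/\partial g)\,dg\wedge df\bigr)=\degw (\partial h/\partial g)+\degw dg\wedge df$ needs no noncancellation check at all, since multiplying a form by a nonzero polynomial shifts every component degree by exactly that amount in the domain $\kx $; the delicate cancellation you should be worrying about lives entirely inside $\partial h/\partial g$.
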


Now, 
we prove Theorem~\ref{prop:main}. 
First, 
consider the following conditions 
for $e_1,e_2,e_3\in \Gamma _+$ 
and $\w \in (\Gamma _+)^3$: 

\smallskip 

\nd (K2$'$) 
$e_1$ and $e_2$ do not belong to 
$\Zn e _2+\Zn e_3$ and $\N e_3$, 
respectively. 

\smallskip

\nd (K3$'$) $2e_1=3e_2$ and 
$e_2+e_3\geq e_1+\Delta _{\w }(e_1,e_2)$. 

\smallskip 

\nd (K4$'$) $2e_1=se_3$ 
for some odd number $s\geq 3$ 
and $e_2+e_3\geq e_1+\Delta _{\w }(e_1,e_3)$. 

\smallskip 

\nd (K5$'$) $3e_2=4e_3$ 
and $e_1\geq 5\gcd (e_2,e_3)+\Delta _{\w }(e_2,2e_3)$.

\smallskip

The following lemma is a direct 
consequence of the generalized Shestakov-Umirbaev theory.

\begin{lemma}\label{lem:key}
Let $e_1,e_2,e_3\in \Gamma _+$ 
and $\w \in (\Gamma _+)^3$ 
be such that $e_1>e_2>e_3$ 
and $\mdegw F=(e_1,e_2,e_3)$ for some $F\in \T_3(k)$.

\noindent{\rm (i)} 
If $F$ admits a Shestakov-Umirbaev reduction 
for the weight $\w $, 
then one of 
{\rm (K3$'$)}, {\rm (K4$'$)} and {\rm (K5$'$)} holds. 

\noindent{\rm (ii)} 
Assume that {\rm (K2$'$)} is satisfied. 
Then, 
the following assertions hold$:$ 

\nd {\rm (1)} 
If $f_3^{\w }$ belongs to $k[f_1,f_2]^{\w }$, 
then we have {\rm (K3$'$)}.

\nd {\rm (2)} 
If $f_2^{\w }$ belongs to $k[f_1,f_3]^{\w }$, 
then we have {\rm (K4$'$)}. 

\nd {\rm (3)} 
If $f_1^{\w }$ belongs to $k[f_2,f_3]^{\w }$, 
then $e_2$ and $e_3$ are linearly dependent over $\Z $ 
and 
\begin{equation}\label{eq:lem:c}
e_1+e_2+e_3\geq \lcm (e_2,e_3)+\degw df_2\wedge df_3. 
\end{equation}
\end{lemma}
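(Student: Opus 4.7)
The plan is to prove (i) by unpacking the data of the Shestakov-Umirbaev reduction through properties (P1), (P2), (P5) and (P7), and to prove (ii) by applying the Shestakov-Umirbaev inequality (Lemma~\ref{lem:SU ineq}) to a suitable representative in a two-generator subalgebra.

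For (i), suppose $(F_{\sigma },G_{\sigma })$ satisfies the Shestakov-Umirbaev condition. Since (SU1) constructs the coordinates of $G_{\sigma }$ from those of $F_{\sigma }$ by elementary operations, $G\in \T _3(k)$ along with $F$. Property (P7) forces $\degw f_{\sigma (1)}$ to be the unique largest $\w $-degree among the coordinates, so $\sigma (1)=1$ and hence $\sigma $ is either $\id $ or $(2\,3)$. For $\sigma =\id $, I would set $\delta =e_2/2$ via (P1) and (SU2), and split on whether $e_1=\degw g_1$ or $e_1<\degw g_1$. In the first subcase, (P2) combined with $e_3<e_2=2\delta $ forces $s=3$, giving $2e_1=3e_2$, and the resulting $e_3\geq \delta +\degw dg_1\wedge dg_2$ rearranges, via $\degw dg_1\wedge dg_2\geq \Delta _{\w }(e_1,e_2)$, into (K3$'$). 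In the second subcase, (P5) supplies $s=3$, $4e_3=3e_2$, and $e_1\geq (5/2)\delta +\degw dg_1\wedge dg_2$; writing $d=\gcd (e_2,e_3)$ with $e_2=4d$, $e_3=3d$ gives $\degw g_1=2e_3$, $\degw g_2=e_2$, turning this into (K5$'$). For $\sigma =(2\,3)$, the analogous (P5) subcase would force $e_2=(3/4)e_3<e_3$, contradicting $e_2>e_3$, so only $e_1=\degw g_1=s\delta $ occurs, which gives $2e_1=se_3$, and (P2) then yields (K4$'$).

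For (ii), the three subcases follow a uniform pattern. Starting from $f_i^{\w }\in k[\{ f_j\} _{j\neq i}]^{\w }$, I would first verify $f_i^{\w }\notin k[\{ f_j^{\w }\} _{j\neq i}]$: in (1) this is immediate from $e_3<e_2<e_1$, and in (2) and (3) it follows from (K2$'$). Collecting $e_i$-homogeneous pieces in any $k$-linear expression of $f_i^{\w }$ as a sum of leading forms produces a single $h\in k[\{ f_j\} _{j\neq i}]$ with $h^{\w }=f_i^{\w }$, so the remark preceding Lemma~\ref{lem:SU ineq} gives $\degw h<\degw ^Sh$ for $S=\{ f_j\} _{j\neq i}$. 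Applying Lemma~\ref{lem:SU ineq} with $f,g\in S$ labeled so that $\degw f\leq \degw g$, one obtains coprime $p,q\in \N $ with $(g^{\w })^p\approx (f^{\w })^q$ and the bound $\degw h\geq q\degw f-\degw f-\degw g+\degw df\wedge dg$. In (1) and (2), the hypothesis $\degw h<\degw g$ forces $p=2$ and $q$ odd $\geq 3$; in (1) further, $\degw h<\degw f$ forces $q=3$. Replacing $\degw df\wedge dg$ by $\Delta _{\w }(\cdot ,\cdot )$ using $F\in \T _3(k)$ and rearranging then yields (K3$'$) and (K4$'$) respectively. In (3), $pe_2=qe_3$ with $\gcd (p,q)=1$ gives the $\Z $-linear dependence of $e_2,e_3$ and the identity $qe_3=\lcm (e_2,e_3)$, which converts the bound directly into (\ref{eq:lem:c}).

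The main obstacle is the case analysis in (i), in particular the observation that the (P5) branch is ruled out when $\sigma =(2\,3)$ by the ordering $e_2>e_3$, so that (K4$'$) emerges only from the branch $e_1=\degw g_1$; care must also be taken throughout to ensure $G\in \T _3(k)$ so that the bounds on $\degw dg_1\wedge dg_2$ can be replaced by the corresponding $\Delta _{\w }$ values. In (ii), the principal bookkeeping step is producing $h\in k[\{ f_j\} _{j\neq i}]$ with $h^{\w }=f_i^{\w }$ from vector-space membership, and correctly selecting the pair $(f,g)$ from $S$ by $\w $-degree before invoking Lemma~\ref{lem:SU ineq}.
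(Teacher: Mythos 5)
Your proof follows essentially the same route as the paper's: part (i) via the observation $\sigma(1)=1$ from (P7), the case split $\sigma=\id$ versus $\sigma=(2,3)$, and the sub-split on whether $\degw g_1=e_1$, using (P2) and (P5) exactly as the paper does (the paper cites (P11) to get $s=3$ in the $\sigma=\id$ case where you deduce it from (P2) and $e_3<2\delta$, which also works); part (ii) likewise matches, extracting a single $h$ with $h^{\w}=f_i^{\w}$, checking non-membership of $h^{\w}$ in the relevant graded subalgebra via (K2$'$), and applying Lemma~\ref{lem:SU ineq} with the same choices of $(f,g)$ and the same rearrangements. The argument is correct and is not a genuinely different approach.
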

\begin{proof}
(i) 
By assumption, 
there exist $G\in \Aut _k\kx $ 
and $\sigma \in \mathfrak{S}_3$ 
such that $(F_{\sigma },G_{\sigma })$ 
satisfies the Shestakov-Umirbaev condition 
for the weight $\w $. 
Then, 
we see from (SU1) that $G$ is tame, 
since so is $F$. 
Since $e_1>e_i$ for $i=2,3$, 
we have $\sigma (1)=1$ in view of (P7). 
Hence, 
$\sigma $ must be the identity permutation 
$\id $ or the transposition $(2,3)$. 
First, 
assume that $\sigma =\id $. 
Then, 
we have $s=3$ by (P11), 
since $e_2>e_3$. 
By (SU2), we get $\degw g_1\leq e_1$ 
and $\degw g_2=e_2$. 
When $\degw g_1=e_1$, 
(SU3) implies 
$$
2e_1
=2\degw g_1=s\degw g_2=3\degw g_2=3e_2. 
$$
Since $G$ is tame, 
and $\degw g_i=e_i$ for $i=1,2$, 
we know by (P2) that 
$$
e_3
\geq (3-2)\frac{1}{2}e_2+\degw dg_1\wedge dg_2
\geq e_1-e_2+\Delta _{\w }(e_1,e_2). 
$$ 
Therefore, 
{\rm (K3$'$)} holds in this case. 
If $\degw g_1<e_1$, 
then we have $\degw g_1=2e_3$ 
and $e_3=(3/2)(1/2)e_2$ by (P5). 
Hence, 
we get $3e_2=4e_3$, 
$\gcd (e_2,e_3)=(1/4)e_2$ 
and $\degw dg_1\wedge dg_2\geq \Delta _{\w }(2e_3,e_2)$. 
Thus, 
the last part of (P5) implies 
$$
e_1\geq \frac{5}{2}\frac{1}{2}e_2
+\degw dg_1\wedge dg_2
\geq 5\gcd (e_2,e_3)+\Delta _{\w }(2e_3,e_2). 
$$
Therefore, 
{\rm (K5$'$)} holds in this case. 
Next, 
assume that $\sigma =(2,3)$. 
Then, 
we have $\degw g_3=\degw f_3=e_3$ 
by (SU2). 
We also have 
$\degw g_1=\degw f_1=e_1$, 
for otherwise $e_2=(3/2)(1/2)e_3$ by (P5), 
and so $e_2<e_3$, 
a contradiction. 
Hence, 
it follows from (SU3) that $2e_1=se_3$ 
for some odd number $s\geq 3$. 
Thus, we get 
$$
e_2\geq (s-2)\frac{1}{2}e_3+\degw dg_1\wedge dg_3
\geq e_1-e_3+\Delta _{\w }(e_1,e_3) 
$$
by (P2). 
Therefore, 
{\rm (K4$'$)} holds in this case.

(ii) 
In the case of (1), 
there exists $h \in k[f_1,f_2]$ 
such that $h ^{\w }=f_3^{\w }$. 
Then, 
we have $\degw h =e_3<e_i$ for $i=1,2$. 
Since $h^{\w }=f_3^{\w }$ is not a constant, 
it follows that 
$h ^{\w }$ does not belong to 
$k[f_1^{\w },f_2^{\w }]$. 
This implies that 
$\degw h <\degw ^Sh $ holds for $S=\{ f_1,f_2\} $ 
as mentioned. 
By Lemma~\ref{lem:SU ineq}, 
there exist 
$p,q\in \N $ with $\gcd (p,q)=1$ 
such that $(f_1^{\w })^p\approx (f_2^{\w })^q$ and 
\begin{equation}\label{eq:keylem:1}
\begin{aligned}
e_3=\degw h 
&\geq 
q\degw f_2-\degw f_1-\degw f_2+\degw df_1\wedge df_2 \\
&\geq qe_2-e_1-e_2+\Delta _{\w }(e_1,e_2). 
\end{aligned}
\end{equation}
We note that $\degw h<\degw f_2<\degw f_1$, 
and $f_1^{\w }$ does not belong to 
$k[f_2^{\w }]$ by (K2$'$). 
Hence, 
we have $(p,q)=(2,3)$ 
by the last part of Lemma~\ref{lem:SU ineq}. 
Since $(f_1^{\w })^p\approx (f_2^{\w })^q$, 
it follows that $2e_1=3e_2$, 
and thus $qe_2-e_1-e_2=e_1-e_2$. 
Therefore, 
we obtain {\rm (K3$'$)} from (\ref{eq:keylem:1}).

We prove (2) similarly. 
Let $h \in k[f_1,f_3]$ be 
such that $h ^{\w }=f_2^{\w }$. 
Then, 
we have $\degw h =e_2<e_1=\degw f_1$. 
Since $e_2$ does not belong to $\N e_3$ by (K2$'$), 
$h ^{\w }$ does not belong to $k[f_3^{\w }]$. 
Hence, 
$\degw h <\degw ^Sh $ holds 
for $S=\{ f_1,f_3\} $ as mentioned. 
As before, 
there exist 
$p,q\in \N $ with $\gcd (p,q)=1$ 
such that $pe_1=qe_3$ and 
\begin{equation}\label{eq:keylem:2}
e_2=\degw h 
\geq qe_3-e_1-e_3+\Delta _{\w }(e_1,e_3) 
\end{equation}
by Lemma~\ref{lem:SU ineq}. 
We note that 
$\degw f_3$ and $\degw h $ 
are less than $\degw f_1$, 
and $f_1^{\w }$ does not belong to $k[f_3^{\w }]$ 
by (K2$'$). 
Hence, 
we know that $p=2$, 
and $q$ is an odd number with $q\geq 3$ 
by the last part of Lemma~\ref{lem:SU ineq}. 
Thus, 
we get $2e_1=qe_3$, 
and so $qe_3-e_1-e_3=e_1-e_3$. 
Therefore, 
we obtain {\rm (K4$'$)} from (\ref{eq:keylem:2}).

Finally, 
we prove (3). 
Let $h \in k[f_2,f_3]$ be 
such that $h ^{\w }=f_1^{\w }$. 
Then, 
we have $\degw h =e_1$. 
Since $e_1$ does not belong to 
$\Zn e_2+\Zn e_3$ by (K2$'$), 
it follows that 
$h ^{\w }$ does not belong to 
$k[f_2^{\w },f_3^{\w }]$. 
This implies that 
$\degw h <\degw ^Sh $ holds for $S=\{ f_2,f_3\} $. 
As before, 
there exist 
$p,q\in \N $ with $\gcd (p,q)=1$ 
such that $pe_3=qe_2$ and 
$
e_1=\degw h \geq 
qe_2-e_2-e_3+\degw df_2\wedge df_3. 
$
Since $qe_2=\lcm (e_2,e_3)$, 
this inequality yields (\ref{eq:lem:c}), 
proving (3). 
\end{proof}

Now, 
let us complete the proof of Theorem~\ref{prop:main}. 
Set $e_1:=d_3$, $e_2:=d_2$ and $e_3:=d_1$. 
Then, 
we have $e_1>e_2>e_3$ by (K1). 
First, 
we prove (i) by contradiction. 
Suppose that $F$ is tame. 
Then, 
$F':=(f_3,f_2,f_1)$ is also tame. 
Since $\degw F'>|\w |$ by (K1), 
it follows that 
$F'$ admits a Shestakov-Umirbaev reduction 
or an elementary reduction for the weight $\w $ 
by Theorem~\ref{thm:SU}. 
Since $\mdegw F'=(e_1,e_2,e_3)$, 
we know by Lemma~\ref{lem:key} that 
one of (K3$'$), (K4$'$), (K5$'$) 
and (\ref{eq:lem:c}) must be satisfied. 
This contradicts 
(K3), (K4), (K5) and (\ref{eq:prop:main}). 
Therefore, 
$F$ does not belong to $\T_3(k)$, 
proving (i).

We prove (ii) similarly. 
Suppose that $(d_1,d_2,d_3)$ 
belongs to $\mdegw \T_3(k)$. 
Then, $(e_1,e_2,e_3)$ also 
belongs to $\mdegw \T_3(k)$. 
Hence, 
there exists $F'\in \T_3(k)$ 
such that $\mdegw F'=(e_1,e_2,e_3)$. 
As in the case of (i), 
this implies that 
one of (K3$'$), (K4$'$) and (K5$'$) holds, 
or $e_2$ and $e_3$ are linearly dependent over $\Z $ 
by Lemma~\ref{lem:key}. 
However, 
(K3$'$) and (K4$'$) do not hold 
because of (K3) and (K4). 
Since $e_2=d_2$ and $e_3=d_1$ 
are linearly independent over $\Z $ by assumption, 
the rest of the conditions also do not hold. 
This is a contradiction. 
Therefore, 
$(d_1,d_2,d_3)$ 
does not belong to $\mdegw \T_3(k)$, 
proving (ii). 
This completes the proof of Theorem~\ref{prop:main}.

\section{A generalization of Sun-Chen's lemma}\label{sect:w}
\setcounter{equation}{0}

In this section, 
we discuss lower bounds for 
the degrees of differential forms. 
Note that, 
if $df_1\wedge \cdots \wedge df_r\neq 0$ 
for $f_1,\ldots ,f_r\in \kx $ with $r\geq 1$,
then we have 
$\deg df_1\wedge \cdots \wedge df_r\geq r$ by definition. 
In the case of $r=2$, 
Kara\'s~\cite[Theorem 14]{456} showed that 
$\deg df_1\wedge df_2\geq 4$ 
under the additional assumption that 
$\deg f_1=4$ and $\deg f_2=6$. 
When $n=3$, 
Sun-Chen~\cite[Lemma 3.1]{SunChen} 
showed that 
$\deg df_1\wedge df_2\geq 3$ 
for each $F\in \Aut _k\kx $ with 
$\deg f_1\nmid \deg f_2$ and 
$\deg f_2\nmid \deg f_1$.

The purpose of this section is to prove 
the following theorem. 
This theorem is considered as a generalization of 
Sun-Chen's lemma, 
since $|\w |_*=3$ if $\Gamma =\Z $ and $\w =(1,1,1)$.

\begin{theorem}\label{thm:|w|*}
Assume that $n=3$. 
Let $F\in \Aut _k\kx $ and $\w \in (\Gamma _+)^3$ 
be such that 
$\degw f_1$ or $\degw f_2$ 
does not belong to 
$\{ w_1,w_2,w_3\} $, 
and $\degw f_i$ does not belong to $\N \degw f_j$ 
for each $(i,j)\in \{ (1,2),(2,1)\} $. 
Then, 
we have $\degw df_1\wedge df_2\geq |\w |_*$. 
\end{theorem}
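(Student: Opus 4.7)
The plan is to argue by contradiction. After reordering variables if necessary, I would assume $w_1 \leq w_2 \leq w_3$, so that $|\w|_*$ is defined relative to the identity permutation. Suppose $d := \degw df_1 \wedge df_2 < |\w|_*$, and set $d_i := \degw f_i$. Since $F \in \Aut _k\kx$, the polynomials $f_1, f_2$ are algebraically independent over $k$, so $\omega := df_1 \wedge df_2 \neq 0$; in particular $d \in \Zn w_1 + \Zn w_2 + \Zn w_3$ and $d \geq w_1 + w_2$.

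The first step is to extract the structural consequences of the assumption $d < |\w|_*$ on the leading form $\omega^{\w}$. Writing
$$\omega^{\w} = h_{12}\, dx_1 \wedge dx_2 + h_{13}\, dx_1 \wedge dx_3 + h_{23}\, dx_2 \wedge dx_3,$$
with each $h_{ij}$ $\w$-homogeneous of $\w$-degree $d - w_i - w_j$, the inequality $|\w|_* \leq 2w_1 + w_3$ combined with $w_1 \leq w_2 \leq w_3$ gives $\degw h_{13} < w_1$, $\degw h_{23} < w_1$, and $\degw h_{12} < w_3$. Hence $h_{13}$ and $h_{23}$ must be constants $c_{13}, c_{23} \in k$, and $h_{12} \in k[x_1, x_2]$; this rigidity of $\omega^{\w}$ is the principal structural input.

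Next I would split into two cases according to whether $f_1^{\w}$ and $f_2^{\w}$ are algebraically independent over $k$. In the independent case, $\omega^{\w} = df_1^{\w} \wedge df_2^{\w}$, so $d = d_1 + d_2$ and the $h_{ij}$ are precisely the Jacobian minors of $(f_1^{\w}, f_2^{\w})$. Combining the rigid form of $\omega^{\w}$ with the identity $df_1 \wedge df_2 \wedge df_3 = c\, dx_1 \wedge dx_2 \wedge dx_3$ (where $c \in k^{\times}$ is the Jacobian of $F$) forces $(d_1, d_2)$ into a very restricted list: for example, in the subcase $c_{13} = c_{23} = 0$, a pointwise rank argument on the gradient vectors shows $f_1^{\w}, f_2^{\w} \in k[x_1, x_2]$, and then wedging with $df_3$ forces $h_{12}\cdot \partial f_3^{\w}/\partial x_3 \in k^{\times}$, whence $h_{12}$ is a nonzero constant and $d_1 + d_2 = w_1 + w_2$. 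The remaining subcases (where $c_{13}$ or $c_{23}$ is nonzero, forcing $d = w_1 + w_3$ or $d = w_2 + w_3$) are handled analogously, and in each case one obtains $(d_1, d_2)$ with either $\{d_1, d_2\} \subseteq \{w_1, w_2, w_3\}$ or $d_i \in \N d_j$, contradicting the hypotheses. If instead $f_1^{\w}$ and $f_2^{\w}$ are algebraically dependent, then $\w$-homogeneity and the UFD property of $\kx$ give a $\w$-homogeneous $g \in \kx$ with $f_1^{\w} = a g^q$ and $f_2^{\w} = b g^p$ for some coprime $p, q \geq 1$ and $a, b \in k^{\times}$; the hypothesis $d_i \notin \N d_j$ excludes $p = 1$ and $q = 1$, so $p, q \geq 2$. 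Forming $h := b^q f_1^p - a^p f_2^q \in k[f_1, f_2]$ (whose leading parts cancel) and applying Lemma~\ref{lem:SU ineq} with $S = \{f_1, f_2\}$ yields degree inequalities relating $\degw h$ to $d$, $d_1$, $d_2$, which after iteration (replacing $f_1, f_2$ by suitable combinations with $h$ at each step) combine with the structural restrictions from the previous step to contradict $d < |\w|_*$.

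The main obstacle is the bookkeeping in the algebraically independent case: one has to carefully manage the sub-configurations determined by which of $c_{13}, c_{23}, h_{12}$ are zero or of which $\w$-degree, and then match the resulting $(d_1, d_2)$ against the explicit enumeration of $\Zn w_1 + \Zn w_2 + \Zn w_3$ below $|\w|_*$, which itself splits according to whether $|\w|_*$ equals $2w_1 + w_3$ or the minimum of $\N w_1 + \N w_2$ exceeding $w_1 + w_3$. In the dependent case the delicate point will be ensuring the iteration terminates at a step where the new polynomial's leading part lies outside $k[f_1^{\w}, f_2^{\w}]$, since only then does Lemma~\ref{lem:SU ineq} give an effective bound on $d$.
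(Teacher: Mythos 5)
There is a genuine gap. Your opening step (the rigidity of the coefficients: the $dx_1\wedge dx_3$ and $dx_2\wedge dx_3$ coefficients are constants and the $dx_1\wedge dx_2$ coefficient lies in $k[x_1,x_2]$) is exactly the paper's Lemma~\ref{lem:h_i}, except that the paper applies it to the \emph{full} form $\omega=df_1\wedge df_2$ rather than to its leading form, and that difference is where your argument breaks. In the ``independent'' case you wedge $\omega^{\w}$ with $df_3$ and invoke $df_1\wedge df_2\wedge df_3=c\,dx_1\wedge dx_2\wedge dx_3$; but that identity involves the full coefficients of $\omega$, whose lower-order $dx_1\wedge dx_3$ and $dx_2\wedge dx_3$ components are not controlled by the leading-form analysis, so the conclusion $h_{12}\cdot\partial f_3^{\w}/\partial x_3\in k^{\times}$ does not follow. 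Even granting it, the resulting equality $d_1+d_2=w_1+w_2$ is not a contradiction: for $\w=(2,5,11)$ and $(d_1,d_2)=(3,4)$ all hypotheses of the theorem hold and $d_1+d_2=w_1+w_2$. In the ``dependent'' case, a single application of Lemma~\ref{lem:SU ineq} to $h=b^qf_1^p-a^pf_2^q$ yields only a \emph{lower} bound on $\degw h$, and you give no mechanism by which the iteration terminates or produces an upper bound contradicting $d<|\w|_*$. Worse, this branch never uses $f_3$ or the hypothesis that $F$ is an automorphism; a statement about arbitrary algebraically independent pairs with small $\degw df_1\wedge df_2$ is far stronger than the theorem (for $\w=(1,1,1)$ it would rule out Jacobian pairs in two variables with non-dividing degrees, which is essentially an open Jacobian-conjecture-type question), so no argument along those lines can close.

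The missing idea is the paper's reduction to two variables. From the coefficient rigidity one constructs, by induction on the number of nonzero constant coefficients, a \emph{degree-preserving tame} automorphism $\phi$ with $\omega^{\phi}=h\eta_l$ (Lemma~\ref{lem:ind key}: elementary shears $x_s\mapsto x_s+\mathrm{const}\cdot x_r$ kill all but one constant coefficient, and one integration step absorbs the remaining $k[\x\sm\{x_n\}]$-coefficient into $x_n$). This forces $\phi(f_1),\phi(f_2)\in k[\x\sm\{x_l\}]$ (Proposition~\ref{prop:key *}), and since $F$ is an automorphism, $(\phi(f_1),\phi(f_2))$ is then an automorphism of that two-variable subring; Proposition~\ref{prop:vdk} (the weighted two-variable statement, proved via Lemma~\ref{lem:SU ineq} applied to $h=x_l\in k[f_1,f_2]$) then contradicts the hypotheses. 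Your leading-form case analysis bypasses this reduction, and neither of your two branches can be completed without it.
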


As a consequence of Theorem~\ref{thm:|w|*}, 
we know that 
$\Delta _{\w }(d_1,d_2)\geq |\w |_*$ 
holds for $d_1,d_2\in \Gamma _+$ 
and $\w \in (\Gamma _+)^3$ 
if $d_1$ or $d_2$ does not belong to 
$\{ w_1,w_2,w_3\} $, 
and $d_i$ does not belong to $\N d_j$ 
for each $(i,j)\in \{ (1,2),(2,1)\} $. 
We also note that 
Kara\'s~\cite[Theorem 14]{456} implies 
$\Delta _{\w }(4,6)\geq 4$ 
when $\Gamma =\Z $ and $\w =(1,1,1)$.

To prove Theorem~\ref{thm:|w|*}, 
we need the following two propositions.

\begin{proposition}\label{prop:vdk}
Assume that $n=2$. 
Let $F\in \Aut _k\kx $ 
and $\w \in (\Gamma _+)^2$ 
be such that $\degw F>|\w |$. 
Then, 
there exist $(i,j)\in \{ (1,2),(2,1)\} $ 
and $l\in \N $ such that 
$f_i^{\w }\approx (f_j^{\w })^l$. 
Hence, 
$\degw f_i$ belongs to $\N \degw f_j$. 
\end{proposition}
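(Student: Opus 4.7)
The plan is to combine the Jacobian identity for the automorphism $F$ with the structure of algebraically dependent $\w $-homogeneous polynomials in $\kx $, and to finish by invoking the 2-variable fact that $\Aut _k\kx =\T _2(k)$ (Jung's theorem).

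First, since $F$ is a $k$-algebra automorphism, its Jacobian is a nonzero constant, so $df_1\wedge df_2=c\,dx_1\wedge dx_2$ for some $c\in k^{\times }$, and hence $\degw df_1\wedge df_2=w_1+w_2=|\w |$. If $f_1^{\w }$ and $f_2^{\w }$ were algebraically independent over $k$, then by the equivalence recorded around (\ref{eq:minimal autom}) we would have $\degw df_1\wedge df_2=\degw f_1+\degw f_2=\degw F$, contradicting $\degw F>|\w |$. So $f_1^{\w }$ and $f_2^{\w }$ are algebraically dependent over $k$. Considering a nontrivial relation $P(f_1^{\w },f_2^{\w })=0$ and decomposing by $\w $-degree forces $\degw f_1$ and $\degw f_2$ to be $\Z $-linearly dependent; writing $p\degw f_1=q\degw f_2$ with coprime $p,q\in \N $, the terms of $P(f_1^{\w },f_2^{\w })$ of a given $\w $-degree can be grouped, after factoring out a suitable monomial, into a nonzero polynomial in $T:=(f_1^{\w })^p/(f_2^{\w })^q$ which vanishes. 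Hence $T$ is algebraic over $k$, and since $k$ is algebraically closed in $\kxr$, we obtain $T\in k^{\times }$, i.e., $(f_1^{\w })^p\approx (f_2^{\w })^q$.

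To finish, we must show $p=1$ or $q=1$. Factoring in the UFD $\kx $ and matching multiplicities at each $\w $-homogeneous irreducible factor, the coprimality of $p$ and $q$ yields a $\w $-homogeneous $h\in \kx $ with $f_1^{\w }=c_1h^q$ and $f_2^{\w }=c_2h^p$ for some $c_1,c_2\in k^{\times }$. The case $p,q\geq 2$ is ruled out by invoking that $F\in \T _2(k)$: writing $F$ as a composition of elementary automorphisms and tracking the leading forms step by step (equivalently, appealing to the Abhyankar--Moh theorem in the $\w $-weighted form) shows that one of the leading forms of the two components of an automorphism of $\kx $ must be a scalar multiple of a power of the other. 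Hence $\min (p,q)=1$, giving $f_i^{\w }\approx (f_j^{\w })^l$ for suitable $(i,j)\in \{ (1,2),(2,1)\} $ and $l\in \N $; the last assertion $\degw f_i\in \N \degw f_j$ then follows from (\ref{eq:deg prop}).

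The hard part is the last step: the first ingredients use only that $f_1$ and $f_2$ are algebraically independent with constant Jacobian, whereas ruling out $p,q\geq 2$ genuinely requires the global fact that $f_1$ and $f_2$ generate $\kx $ over $k$, not just that they are algebraically independent.
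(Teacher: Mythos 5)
Your first two steps are sound and agree with the paper's setup: the constant Jacobian gives $\degw df_1\wedge df_2=|\w |$, so $\degw F>|\w |$ forces $f_1^{\w }$ and $f_2^{\w }$ to be algebraically dependent over $k$, and from a dependence relation one extracts coprime $p,q\in \N $ with $(f_1^{\w })^p\approx (f_2^{\w })^q$. You also correctly locate the real difficulty in showing $\min (p,q)=1$. But that is exactly where your argument has a gap: the assertion that ``writing $F$ as a composition of elementary automorphisms and tracking the leading forms step by step'' shows that one leading form is a scalar multiple of a power of the other is not a proof --- it is a restatement of the proposition. Making it precise requires controlling cancellation of $\w $-leading forms under composition with elementary automorphisms, i.e.\ proving a weighted analogue of van der Kulk's theorem, which is essentially the statement being proved; and there is no off-the-shelf ``$\w $-weighted Abhyankar--Moh theorem'' for weights valued in an arbitrary totally ordered abelian group $\Gamma $ that you could cite. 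Jung's theorem only tells you that $F$ is tame; it does not by itself deliver the degree structure, even in the total-degree case, let alone for general $\w $.

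The paper closes this gap with a short quantitative argument. Since $f_1^{\w }$ and $f_2^{\w }$ are algebraically dependent, some variable $x_l$ satisfies $x_l^{\w }=x_l\notin k[f_1^{\w },f_2^{\w }]$, while $x_l\in k[f_1,f_2]=\kx $; hence $\degw x_l<\degw ^Sx_l$ for $S=\{ f_1,f_2\} $, and Lemma~\ref{lem:SU ineq} (the Shestakov--Umirbaev inequality) yields coprime $p_1,p_2\in \N $ with $(f_2^{\w })^{p_1}\approx (f_1^{\w })^{p_2}$ and $w_l=\degw x_l\geq p_2d_1-d_1-d_2+\degw df_1\wedge df_2$, where $d_i=\degw f_i$. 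Using $\degw df_1\wedge df_2=|\w |$ and $d_i=p_i\gcd (d_1,d_2)$, this rearranges to $(p_1+p_2-p_1p_2)\gcd (d_1,d_2)\geq |\w |-w_l\geq \min \{ w_1,w_2\} >0$, so $p_1p_2<p_1+p_2$, and coprimality forces $p_1=1$ or $p_2=1$. The missing ingredient in your write-up is precisely this degree inequality applied to the small-degree element $h=x_l$; without it (or an equivalent substitute), your final step does not go through.
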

\begin{proof}
Since $\degw F>|\w |$ by assumption, 
$f_1^{\w }$ and $f_2^{\w }$ 
are algebraically dependent over $k$ 
by (\ref{eq:minimal autom}) and the preceding discussion. 
Hence, 
$x_l$ does not belong to 
$k[f_1^{\w },f_2^{\w }]$ 
for some $l\in \{ 1,2\} $. 
On the other hand, 
$x_l$ belongs to $\kx =k[f_1,f_2]$. 
Thus, 
$\degw ^Sx_l>\degw x_l$ holds for $S=\{ f_1,f_2\} $. 
By Lemma~\ref{lem:SU ineq}, 
there exist $p_1,p_2\in \N $ with $\gcd (p_1,p_2)=1$ 
such that $(f_2^{\w })^{p_1}\approx (f_1^{\w })^{p_2}$ 
and  
$$
w_l=
\degw x_l\geq 
p_2d_1-d_1-d_2+\degw df_1\wedge df_2, 
$$ 
where $d_i:=\degw f_i$ for $i=1,2$. 
Then, 
we have $d_i=p_i\gcd (d_1,d_2)$ for $i=1,2$. 
Since $\degw df_1\wedge df_2=|\w |$, 
it follows that 
$$
(p_1+p_2-p_1p_2)\gcd (d_1,d_2)
=d_1+d_2-p_2d_1
\geq |\w |-w_l\geq \min \{ w_1,w_2\} >0. 
$$
Hence, we get $p_1p_2<p_1+p_2$. 
Since $\gcd (p_1,p_2)=1$, 
this implies that $p_1=1$ or $p_2=1$. 
Therefore, we have 
$f_2^{\w }\approx (f_1^{\w })^{p_2}$ 
or $(f_2^{\w })^{p_1}\approx f_1^{\w }$. 
\end{proof}

Next, 
assume that $n\geq 3$, 
and let $\w \in (\Gamma _+)^n$ 
be such that $w_1\leq \cdots \leq w_n$. 
We define 
$$
|\w |_*:=\min (\{ 
\gamma \in \N w_1+\cdots +\N w_{n-1}\mid 
\gamma >|\w |-w_{n-1}\} \cup \{ |\w |+w_1-w_{n-1}\} ). 
$$
Note that, 
if $n=3$, 
then this is the same as $|\w |_*$ defined before.

With this notation, 
we have the following proposition.

\begin{proposition}\label{prop:key *}
If $f_1,\ldots ,f_{n-1}\in \kx $ 
are algebraically independent over $k$ 
and $\degw df_1\wedge \cdots \wedge df_{n-1}<|\w |_*$, 
then there exist $\phi \in \T_n(k)$ and $1\leq l\leq n$ 
such that 
$\phi (f_i)$ belongs to 
$k[\x \sm \{ x_l\} ]$ 
and $\degw \phi (f_i)=\degw f_i$ 
for $i=1,\ldots ,n-1$. 
\end{proposition}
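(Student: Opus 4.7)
The plan is to analyze the $(n-1)$-form $\omega = df_1 \wedge \cdots \wedge df_{n-1}$ via its coefficients $c_l$ in the expansion
$$
\omega = \sum_{l=1}^n c_l\, dx_1 \wedge \cdots \wedge \widehat{dx_l} \wedge \cdots \wedge dx_n,
$$
and to construct the tame $\phi$ from a rectification of the associated derivation $D = \sum_l (-1)^{l-1} c_l\, \partial/\partial x_l$, which annihilates every $f_i$. The argument mirrors Sun-Chen's proof of \cite[Lemma 3.1]{SunChen}, which this proposition generalizes to weighted degrees and to $n-1$ polynomials in $n \geq 3$ variables.

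First I examine the $\w$-leading forms $f_1^\w, \ldots, f_{n-1}^\w$. If they are algebraically independent, then $\omega^\w = df_1^\w \wedge \cdots \wedge df_{n-1}^\w$ has $\w$-degree exactly $\sum_i \degw f_i$. The hypothesis $\sum_i \degw f_i < |\w|_*$, combined with the definition of $|\w|_*$ as the smallest element of $\N w_1 + \cdots + \N w_{n-1}$ exceeding $|\w|-w_{n-1}$ (or else $|\w|+w_1-w_{n-1}$), allows one to identify a common ``missing direction'' among the $f_i^\w$: either directly, when the sum lies under $|\w|-w_l$ for some $l$, or after a preliminary linear change of coordinates that rotates the joint kernel of the leading parts to a coordinate axis. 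In both sub-cases, $\phi$ can be taken to be linear, hence tame and degree-preserving in the $\w$-grading.

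When $f_1^\w, \ldots, f_{n-1}^\w$ are algebraically dependent, I select a maximal algebraically-independent sub-family and apply Lemma~\ref{lem:SU ineq} to express each remaining leading form as a monomial in the chosen ones, with exponents whose ``small-sum'' bound (analogous to $p_1p_2 < p_1+p_2$ in the proof of Proposition~\ref{prop:vdk}) is enforced by $\degw \omega < |\w|_*$. These monomial identities give structural control over how the $f_i^\w$ sit inside $\kx$. I then combine the linear change of coordinates from the independent case with an elementary tame substitution $x_m \mapsto x_m + P(\x \sm \{x_m\})$ whose polynomial $P$ is engineered to absorb lower-order noise without perturbing any $f_i^\w$, producing the final $\phi$.

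The main obstacle is the simultaneous achievement of both requirements $\phi(f_i) \in k[\x \sm \{x_l\}]$ and $\degw \phi(f_i) = \degw f_i$ for every $i$. This amounts to rectifying $D$ by a tame $\phi$ that acts trivially on each $f_i^\w$, and the bound $|\w|_*$ is precisely tight for this purpose. The main technical work lies in verifying that the combined linear-plus-elementary $\phi$ achieves both conditions simultaneously for every $i$, using that elementary substitutions in a ``silent'' variable $x_m$ (one not appearing in any $f_i^\w$) do not affect $\degw f_i$, and that such a silent variable exists thanks to the algebraic-independence analysis of the first step.
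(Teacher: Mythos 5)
Your starting point---expanding $\omega = df_1\wedge\cdots\wedge df_{n-1}$ over the basis $dx_1\wedge\cdots\wedge\widehat{dx_l}\wedge\cdots\wedge dx_n$ and aiming to concentrate it on a single term, i.e.\ to rectify the associated derivation---is exactly the paper's, and your closing remark that $\omega^{\phi}=h\,\eta_l$ forces $\phi(f_i)\in k[\x\sm\{x_l\}]$ is sound. The gap is in the middle. The case split on algebraic (in)dependence of the leading forms $f_i^{\w}$ is not the right organizing principle, and the claim that in the independent case ``$\phi$ can be taken to be linear'' is false. Take $n=3$, $\w=(1,1,5)$ (so $|\w|_*=7$), $f_1=x_3+x_1^5$, $f_2=x_2$: the leading forms are algebraically independent, $\degw df_1\wedge df_2=6<7$, yet no linear $\phi$ can put $\phi(f_1)$ and $\phi(f_2)$ into a common $k[\x\sm\{x_l\}]$ (the linear parts of $\phi(x_1),\phi(x_2),\phi(x_3)$ would all have to avoid $x_l$, contradicting invertibility); one needs the elementary substitution $x_3\mapsto x_3-x_1^5$ of $\w$-degree $w_3$. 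Here the ``joint kernel of the leading parts'' is spanned by $\partial/\partial x_1-5x_1^4\,\partial/\partial x_3$, which no linear change of coordinates rotates to a coordinate axis. Likewise, in the dependent case, invoking Lemma~\ref{lem:SU ineq} to write the remaining leading forms ``as monomials in the chosen ones'' is not something that lemma delivers---it gives degree inequalities, not structural identities---and the subsequent ``absorb lower-order noise'' step is not a construction.

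What is missing is the one computation that makes the proof go through: writing $\omega=\sum_{i=1}^n h_i\eta_i$ with $\eta_i=dx_1\wedge\cdots\wedge dx_{i-1}\wedge dx_{i+1}\wedge\cdots\wedge dx_n$ and $w_1\leq\cdots\leq w_n$, the bound $\degw\omega<|\w|_*$ forces $h_1,\ldots,h_{n-1}\in k$, $h_n\in k[\x\sm\{x_n\}]$ and $\degw h_n\leq w_n-w_{n-1}$ (the paper's Lemma~\ref{lem:h_i}; this is where both branches of the definition of $|\w|_*$ are used). Once the $h_i$ with $i<n$ are known to be constants, $\phi$ is built explicitly by induction: linear substitutions $x_s\mapsto x_s\pm h_r^{-1}h_s x_r$ among $x_1,\ldots,x_{n-1}$ reduce to a single nonzero $h_r$ with $r<n$, and one elementary substitution $x_n\mapsto x_n\pm g$ with $\partial g/\partial x_r=h_r^{-1}h_n$ (degree-preserving because $\degw g\leq w_n$) kills the $\eta_n$-term. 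Without the constancy of $h_1,\ldots,h_{n-1}$, no such reduction is available, so as written your argument does not reach the conclusion.
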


Before proving Proposition~\ref{prop:key *}, 
we show how Propositions~\ref{prop:vdk} 
and~\ref{prop:key *} imply Theorem~\ref{thm:|w|*}. 
By changing the indices of $x_1$, $x_2$ and $x_3$, 
and $w_1$, $w_2$ and $w_3$ if necessary, 
we may assume that $w_1\leq w_2\leq w_3$. 
Suppose to the contrary that 
$\degw df_1\wedge df_2<|\w |_*$ 
for some $F\in \Aut _k\kx $ and $\w \in (\Gamma _+)^3$ 
which satisfy the assumptions of Theorem~\ref{thm:|w|*}. 
By Proposition~\ref{prop:key *}, 
there exist $\phi \in \T_3(k)$ and $1\leq i_1<i_2\leq 3$ 
such that $\phi (f_i)$ belongs to $k[x_{i_1},x_{i_2}]$ 
and $\degw \phi (f_i)=\degw f_i$ for $i=1,2$. 
Then, 
$G:=(\phi (f_1),\phi (f_2))$ 
belongs to $\Aut _kk[x_{i_1},x_{i_2}]$. 
Set $\vv :=(w_{i_1},w_{i_2})$. 
Then, 
we have 
$$
d_i:=\degw f_i=\degw \phi (f_i)=\degv \phi (f_i)
$$
for $i=1,2$. 
If $\degv G=|\vv |$, 
then $(d_1,d_2)=\mdegv G$ 
is equal to $(w _{i_1},w_{i_2})$ or $(w _{i_2},w_{i_1})$ 
by the remark after Lemma~\ref{prop:deg F}. 
Hence, 
$d_1$ and $d_2$ both belong to $\{ w_1,w_2,w_3\} $, 
a contradiction. 
If $\degv G>|\vv |$, 
then $d_i$ belongs to $\N d_j$ 
for some $(i,j)\in \{ (1,2),(2,1)\} $ 
by Proposition~\ref{prop:vdk}, 
a contradiction. 
Therefore, 
Theorem~\ref{thm:|w|*} holds true.

To prove Proposition~\ref{prop:key *}, 
we need some lemmas. 
Let $\kxr $ be the field of fractions of $\kx $. 
Then, 
the following assertion holds.

\begin{lemma}\label{lem:tbasis}
Assume that $g_1,\ldots ,g_r\in \kxr $ 
and $1\leq l\leq n$ 
satisfy the following conditions, 
where $1\leq r<n$. 
Then, 
$g_1,\ldots ,g_r$ belong to $k(\x \sm \{ x_l\} )$. 

\nd {\rm (1)} 
$g_1,\ldots ,g_r$ are algebraically independent over $k$.

\nd {\rm (2)} 
$g_1,\ldots ,g_r$, $x_{i_1},\ldots ,x_{i_{n-r}}$ 
are algebraically dependent over $k$ for any distinct 	
$i_1,\ldots ,i_{n-r}\in \{ 1,\ldots ,n\} \sm \{ l\} $. 
\end{lemma}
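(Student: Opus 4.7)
The plan is a transcendence-degree argument inside $\kxr$. Set $K=k(\x\sm\{x_l\})$, so that $\kxr=K(x_l)$ with $x_l$ transcendental over $K$ (the $x_i$'s being algebraically independent over $k$). Any element of the purely transcendental extension $K(x_l)$ that is algebraic over $K$ already lies in $K$, so it suffices to show that each $g_j$ is algebraic over $K$; equivalently, that $L:=K(g_1,\ldots,g_r)$ has transcendence degree $n-1$ over $k$.

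Because $K\subset L\subset \kxr$, one has $n-1\leq \mathrm{trdeg}_k L\leq n$, and the only thing to rule out is the upper value. Suppose for contradiction that $\mathrm{trdeg}_k L=n$. Starting from $\{g_1,\ldots,g_r\}$, which is algebraically independent over $k$ by (1), I would adjoin elements of $\x\sm\{x_l\}$ one at a time, each chosen so that the enlarged set remains algebraically independent over $k$. Since $\{g_1,\ldots,g_r\}\cup(\x\sm\{x_l\})$ generates $L$ as a field over $k$, once the procedure halts every remaining $x_i$ with $i\neq l$ is algebraic over the enlarged set, which is therefore a transcendence basis of $L/k$. Under the hypothetical equality $\mathrm{trdeg}_k L=n$ this basis has exactly $n$ elements, so one would obtain distinct indices $i_1,\ldots,i_{n-r}\in\{1,\ldots,n\}\sm\{l\}$ for which $\{g_1,\ldots,g_r,x_{i_1},\ldots,x_{i_{n-r}}\}$ is algebraically independent over $k$, contradicting (2).

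Hence $\mathrm{trdeg}_k L=n-1$, each $g_j$ is algebraic over $K$, and the reduction in the first paragraph yields $g_j\in K=k(\x\sm\{x_l\})$, as desired. The only nontrivial ingredient is the extension of $\{g_1,\ldots,g_r\}$ to a transcendence basis of $L/k$ from within the generating set $\x\sm\{x_l\}$; this is the standard matroid-style exchange property for algebraic independence, and it is precisely the place where hypothesis (2) is exploited.
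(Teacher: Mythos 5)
Your proof is correct and follows essentially the same route as the paper's: both reduce to showing $k(g_1,\ldots,g_r,\x\sm\{x_l\})$ has transcendence degree $n-1$ (using that $k(\x\sm\{x_l\})$ is algebraically closed in $\kxr=k(\x\sm\{x_l\})(x_l)$), and both derive the contradiction by extending $\{g_1,\ldots,g_r\}$ to a transcendence basis chosen from within the generating set $\{g_1,\ldots,g_r\}\cup(\x\sm\{x_l\})$, which would violate (2).
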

\begin{proof}
Set 
$S=\{ x_i\mid i\neq l\} \cup \{ g_1,\ldots ,g_r\} $. 
If the assertion is false, 
then $\kxr $ is algebraic over $k(S)$. 
By (1), 
there exists a transcendence basis $S_0$ of $\kxr $ 
over $k$ such that 
$\{ g_1,\ldots ,g_r\} \subset S_0\subset S$. 
Then, 
we have 
$$
S_0=\{ g_1,\ldots ,g_r,x_{i_1},\ldots ,x_{i_{n-r}}\} 
$$ 
for some distinct 
$i_1,\ldots ,i_{n-r}\in \{ 1,\ldots ,n\} \sm \{ l\} $. 
This contradicts (2). 
\end{proof}

Now, 
write 
$$
\omega :=df_1\wedge \cdots \wedge df_{n-1}
=\sum _{i=1}^nh_i\eta _i,
$$
where $h_i\in \kx $ and 
$\eta _i:=dx_1\wedge \cdots \wedge 
dx_{i-1}\wedge dx_{i+1}\wedge \cdots \wedge dx_n$
for each $i$. 
Then, 
we have $\degw \omega \geq \degw h_i\eta _i$ for $i=1,\ldots ,n$. 
For each $\phi \in \Aut _k\kx $, 
it holds that 
\begin{equation}\label{eq:omega ^phi note}
\omega ^{\phi }
:=d\phi (f_1)\wedge \cdots \wedge d\phi (f_{n-1})
=\sum _{i=1}^n\phi (h_i)\eta _i^{\phi }, 
\end{equation}
where 
$\eta _i^{\phi }:=
d\phi (x_1)\wedge \cdots \wedge 
d\phi (x_{i-1})\wedge d\phi (x_{i+1})
\wedge \cdots \wedge d\phi (x_n)$ 
for each $i$. 
In fact, we have 
\begin{align*}
&d\phi (f_i)
=d(f_i(\phi (x_1),\ldots ,\phi (x_n))) \\
&\quad =\sum _{j=1}^n
\left(\frac{\partial f_i}{\partial x_j}\right) 
(\phi (x_1),\ldots ,\phi (x_n))
d\phi (x_j)
=\sum _{j=1}^n\phi \left( 
\frac{\partial f_i}{\partial x_j}
\right) d\phi (x_j)
\end{align*}
for each $i$ by chain rule. 
We note that, 
if $\phi $ is an elementary automorphism 
such that $\phi (x_l)=x_l+p$ 
for some $1\leq l\leq n$ and $p\in k[\x \sm \{ x_l\} ]$, 
then 
\begin{equation}\label{eq:diff elem}
\eta _l^{\phi }=\eta _l
\quad \text{and}\quad 
\eta _i^{\phi }
=\eta _i-(-1)^{i-l}\frac{\partial p}{\partial x_i}\eta _l
\quad \text{for each}\quad i\neq l. 
\end{equation}

With the notation above, 
we have the following lemma.

\begin{lemma}\label{lem:ind key}
If $\degw \omega <|\w |_*$, 
then there exist $\phi \in \T_n(k)$, 
$h\in \kx $ and $1\leq l\leq n$ 
such that 
$\omega ^{\phi }=h\eta _l$ 
and $\degw \phi (f_i)=\degw f_i$ for $i=1,\ldots ,n-1$. 
\end{lemma}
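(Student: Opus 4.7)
The plan is to proceed by induction on the number $m$ of indices $i\in\{1,\ldots,n\}$ with $h_i\neq 0$. The base case $m\leq 1$ is immediate, since $\omega$ itself has the form $h\eta_l$ and $\phi=\id$ suffices.

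For the inductive step $m\geq 2$, after reordering so that $w_1\leq\cdots\leq w_n$, the hypothesis $\degw\omega<|\w|_*$ together with $|\w|_*\leq|\w|+w_1-w_{n-1}$ and $\degw\eta_i=|\w|-w_i$ yields $\degw h_i<w_1+w_i-w_{n-1}$ for every $i$. For $i\leq n-1$ this forces $h_i\in k$, while for $i=n$ it gives $\degw h_n<w_n$, so $h_n\in k[\x\sm\{x_n\}]$. Since $m\geq 2$, some $h_j\neq 0$ with $j<n$, and I would seek an elementary tame $\phi_0$ with $\phi_0(x_n)=x_n+p$ for a suitable $p\in k[\x\sm\{x_n\}]$, chosen so that, via (4.2) and (4.3), the new $\eta_n$-coefficient of $\omega^{\phi_0}$, equal to $h_n-\sum_{i<n}(-1)^{i-n}h_i(\partial p/\partial x_i)$, vanishes; concretely, take $p$ with $(-1)^{j-n}h_j(\partial p/\partial x_j)=h_n$ by antidifferentiation in $x_j$. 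Because $\phi_0(h_i)=h_i$ for $i\neq n$ (the $h_i$ being constants), the other components of $\omega^{\phi_0}$ are unchanged, so $\omega^{\phi_0}$ has exactly $m-1$ nonzero components and its $\w$-degree remains bounded by $\degw\omega<|\w|_*$. Applying the inductive hypothesis to $\omega^{\phi_0}$ produces $\phi'\in\T_n(k)$ with $(\omega^{\phi_0})^{\phi'}=h\eta_l$ and $\degw\phi'(\phi_0(f_i))=\degw\phi_0(f_i)=\degw f_i$, so $\phi=\phi'\circ\phi_0$ is as required.

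The main obstacle is ensuring that $p$ can be chosen with $\degw p\leq w_n$, so that $\phi_0$ preserves each $\degw f_j$. Antidifferentiation in $x_j$ raises the $\w$-degree by $w_j$, and the naive bound $\degw p<w_1+w_n-w_{n-1}+w_j$ may exceed $w_n$. Overcoming this would exploit the refined definition of $|\w|_*$, which takes the minimum over $\{|\w|+w_1-w_{n-1}\}$ and the elements of $\N w_1+\cdots+\N w_{n-1}$ exceeding $|\w|-w_{n-1}$: the second condition rules out certain monomials in $h_n$ a priori, restricting the support of $h_n$ so that a valid choice of $j<n$ and a polynomial antiderivative $p$ of the required $\w$-degree always exists.
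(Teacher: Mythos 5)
Your overall architecture (induction on the number of nonzero components, elementary substitutions analyzed via the formulas for $\omega ^{\phi }$ and $\eta _i^{\phi }$, and the guess that the refined definition of $|\w |_*$ is what rescues the degree of the antiderivative) matches the paper's, and your final worry is resolved exactly as you predict: Lemma~\ref{lem:h_i} gives the sharper bound $\degw h_n\leq w_n-w_{n-1}$, so an antiderivative taken with respect to any $x_r$ with $r\leq n-1$ has $\w $-degree at most $w_n-w_{n-1}+w_r\leq w_n$. The genuine gap is in the inductive step itself, in two ways. First, a substitution $\phi _0(x_n)=x_n+p$ only ever alters the $\eta _n$-coefficient, so when $h_n=0$ but two or more of the constants $h_1,\ldots ,h_{n-1}$ are nonzero the step makes no progress and the induction stalls; this case really occurs (take $n=3$, $\w =(1,1,1)$, $f_1=x_1+x_2$, $f_2=x_3$, so that $\omega =\eta _1+\eta _2$ and $\deg \omega =2<3=|\w |_*$), and moreover every instance with $m\geq 3$ lands in it after one application of your move. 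Second, even when $h_n\neq 0$, your concrete $p$ (an antiderivative of $\pm h_j^{-1}h_n$ in $x_j$) kills the $\eta _n$-coefficient only if $h_j$ is the \emph{unique} nonzero constant: the new coefficient is $h_n-\sum _{i<n}(-1)^{i-n}h_i\,\partial p/\partial x_i$, and since $p$ inherits from $h_n$ a dependence on the other variables $x_i$, the terms with $i\neq j$ and $h_i\neq 0$ do not vanish. In effect your argument proves only the case where at most one of $h_1,\ldots ,h_{n-1}$ is nonzero.

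The missing idea is a second, purely linear move that reduces the number of nonzero constants among $h_1,\ldots ,h_{n-1}$: for $r<s<n$ with $h_r,h_s\neq 0$, the tame automorphism $\sigma (x_s)=x_s+(-1)^{s-r}h_r^{-1}h_sx_r$ preserves all $\w $-degrees (because $\degw h_r^{-1}h_sx_r=w_r\leq w_s$, so Lemma~\ref{lem:deg pres aut} applies) and, by (\ref{eq:diff elem}), replaces $\eta _r$ by $\eta _r-h_r^{-1}h_s\eta _s$, thereby cancelling the $\eta _s$-component of $\omega $. The paper inducts on $\#\{ i<n\mid h_i\neq 0\} $ using exactly these linear merges, and performs your integration step only in the base case where a single constant $h_r$ survives --- which is precisely the case where your choice of $p$ works and the degree bound above applies. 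So you need to add the linear merging step and run your integration last, not first.
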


Before proving Lemma~\ref{lem:ind key}, 
we show Proposition~\ref{prop:key *} 
by assuming this lemma. 
Since $\degw \omega <|\w |_*$ 
by assumption, 
there exist $\phi \in \T_n(k)$, 
$h\in \kx $ and $1\leq l\leq n$ 
as in Lemma~\ref{lem:ind key}. 
We have only to check that 
$\phi (f_i)$ belongs to $k[\x \sm \{ x_l\} ]$ 
for $i=1,\ldots ,n-1$. 
Since 
$\omega ^{\phi }=h\eta _l$, 
we have 
$$
d\phi (f_1)\wedge \cdots \wedge 
d\phi (f_{n-1})\wedge dx_i
=\omega ^{\phi }\wedge dx_i
=h\eta _l\wedge dx_i=0
$$ 
for each $i\neq l$. 
Hence, 
$\phi (f_1),\ldots ,\phi (f_{n-1})$, $x_i$ 
are algebraically dependent over $k$ 
for each $i\neq l$. 
Thus, 
$\phi (f_1),\ldots ,\phi (f_{n-1})$ 
belong to $k[\x \sm \{ x_l\} ]$ 
by Lemma~\ref{lem:tbasis}. 
Therefore, 
Proposition~\ref{prop:key *} 
follows from Lemma~\ref{lem:ind key}.

To prove Lemma~\ref{lem:ind key}, 
we use the following two lemmas.

\begin{lemma}\label{lem:deg pres aut}
Assume that $\sigma \in \Aut _k\kx $ 
is such that $\sigma (x_i)=\alpha _ix_i+p_i$ 
for some $\alpha _i\in k^{\times }$ 
and $p_i\in k[x_1,\ldots ,x_{i-1}]$ 
with $\degw p_i\leq w_i$ 
for $i=1,\ldots ,n$. 
Then, 
$\degw \sigma (f)=f$ 
holds for each $f\in \kx $. 
\end{lemma}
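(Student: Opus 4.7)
The plan is to establish the one-sided bound $\degw \sigma(f) \leq \degw f$ for every $f \in \kx$, then deduce the reverse inequality by applying the same bound to $\sigma^{-1}$ after verifying that $\sigma^{-1}$ satisfies the same hypotheses as $\sigma$.

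For the upper bound, since $\alpha_i \in k^{\times}$ gives $\degw \alpha_i x_i = w_i$ and $\degw p_i \leq w_i$ by hypothesis, we have $\degw \sigma(x_i) \leq w_i$ for each $i$. Applying (\ref{eq:deg prop}) monomial-by-monomial then yields $\degw \sigma(x_1^{a_1}\cdots x_n^{a_n}) \leq \sum_i a_i w_i = \degw(x_1^{a_1}\cdots x_n^{a_n})$, and summing gives $\degw \sigma(f) \leq \degw f$ for all $f \in \kx$.

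Next I would show by induction on $i$ that $\sigma^{-1}(x_i)$ has the form $\alpha_i^{-1} x_i + p_i'$ with $p_i' \in k[x_1,\ldots,x_{i-1}]$ and $\degw \sigma^{-1}(x_i) \leq w_i$. Solving $\sigma(x_i) = \alpha_i x_i + p_i$ for $x_i$ and applying $\sigma^{-1}$ gives the recursion $\sigma^{-1}(x_i) = \alpha_i^{-1}\bigl(x_i - p_i(\sigma^{-1}(x_1), \ldots, \sigma^{-1}(x_{i-1}))\bigr)$. By the induction hypothesis, each $\sigma^{-1}(x_j)$ with $j < i$ lies in $k[x_1,\ldots,x_j]$ and satisfies $\degw \sigma^{-1}(x_j) \leq w_j$; substituting into $p_i$ and using (\ref{eq:deg prop}) gives $\degw p_i(\sigma^{-1}(x_1),\ldots,\sigma^{-1}(x_{i-1})) \leq \degw p_i \leq w_i$. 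Hence $\sigma^{-1}$ also satisfies the hypotheses of the lemma with constants $\alpha_i^{-1}$ and polynomials $p_i'$ of $\w$-degree at most $w_i$.

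Finally, the upper bound applied to $\sigma^{-1}$ yields $\degw \sigma^{-1}(g) \leq \degw g$ for every $g \in \kx$; taking $g = \sigma(f)$ gives $\degw f \leq \degw \sigma(f)$, and combining with the first inequality produces the desired equality $\degw \sigma(f) = \degw f$. The main technical point is the inductive step showing that substitution of $\sigma^{-1}(x_j)$ into $p_i$ does not increase the $\w$-degree; this is not truly an obstacle since it follows immediately from the monomial estimate $\degw \sigma^{-1}(x_j) \leq w_j$ together with (\ref{eq:deg prop}), but it is the only place where the recursive structure of a ``triangular'' automorphism is genuinely used.
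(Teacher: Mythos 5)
Your proof is correct, but it takes a genuinely different route from the paper. You prove the easy upper bound $\degw \sigma (f)\leq \degw f$ and then obtain the reverse inequality by checking, via induction on $i$ using the recursion $\sigma ^{-1}(x_i)=\alpha _i^{-1}\bigl(x_i-\sigma ^{-1}(p_i)\bigr)$, that $\sigma ^{-1}$ is again triangular with $\degw$ of the perturbation at most $w_i$, so the same upper bound applied to $\sigma ^{-1}$ at $g=\sigma (f)$ gives $\degw f\leq \degw \sigma (f)$. The paper instead works only with $\sigma $ itself: after the same upper bound it reduces to $f^{\w }$, introduces the ``leading form'' automorphism $\tau $ defined by $\tau (x_i)=\sigma (x_i)^{\w }$, observes that $\sigma (m)^{\w }=\tau (m)$ for each monomial $m$, and concludes that the top-degree parts of the $\sigma (m_i)$ cannot cancel because $\tau (f^{\w })\neq 0$ ($\tau $ being injective). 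Your argument is arguably more elementary and uses only the group structure of $\Aut _k\kx $ plus the multiplicativity in (\ref{eq:deg prop}); its only extra cost is the inductive verification that the inverse of a triangular automorphism with $\degw p_i\leq w_i$ is of the same shape, which you carry out correctly. The paper's argument, by contrast, isolates the associated graded automorphism $\tau $, a device that explains \emph{why} the leading form survives and would generalize to situations where one does not wish to invoke the inverse. Both proofs are complete; no gap.
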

\begin{proof}
We may assume that $f\neq 0$. 
Since $\degw p_i\leq w_i$ by assumption, 
we have $\degw \sigma (x_i)=w_i$ for each $i$. 
Hence, 
the assertion holds when $f$ is a monomial 
in view of (\ref{eq:deg prop}). 
This implies that 
$\degw \sigma (f)\leq \degw f$ 
in the general case. 
Since 
$\sigma (f)=\sigma (f^{\w })+\sigma (f-f^{\w })$ 
with 
$$
\degw \sigma (f-f^{\w })
\leq \degw {(f-f^{\w })}<\degw f, 
$$
it suffices to show that 
$\degw \sigma (f^{\w })=\degw f$. 
Since $\degw p_i\leq w_i$, 
each $\sigma (x_i)^{\w }$ 
has the form $\alpha _ix_i+p_i'$ 
for some $p_i'\in k[x_1,\ldots ,x_{i-1}]$. 
Define 
$\tau \in \Aut _k\kx $ by 
$\tau (x_i)=\sigma (x_i)^{\w }$ 
for $i=1,\ldots ,n$. 
Then, 
$\sigma (m)^{\w }=\tau (m)$ 
holds for each monomial $m$ by (\ref{eq:deg prop}). 
Put $\alpha =\degw f$ 
and write $f^{\w }=\sum _im_i$, 
where $m_i$'s are nonzero monomials of $\kx _{\alpha }$. 
Then, 
we have 
$\degw \sigma (m_i)=\degw m_i=\alpha $ 
for each $i$, 
and 
$\sum _i\sigma (m_i)^{\w }
=\sum _i\tau (m_i)
=\tau \left( 
\sum _im_i
\right) 
=\tau (f^{\w})\neq 0$. 
This implies that 
$$
\sum _i\sigma (m_i)^{\w }
=\left( \sum _i\sigma (m_i)\right) ^{\w }
=\sigma \left(\sum _im_i\right)^{\w }
=\sigma (f^{\w })^{\w }. 
$$ 
Hence, 
$\sigma (f^{\w })^{\w }$ belongs to $\kx _{\alpha }$. 
Therefore, 
we get $\degw \sigma (f^{\w }) =\degw f$. 
\end{proof}

At this point, 
it is useful to remark that 
$\gamma <|\w |_*$ implies $\gamma \leq |\w |-w_{n-1}$ 
for each $\gamma \in \N w_1+\cdots +\N w_{n-1}$, 
and $|\w |_*\leq |\w |+w_1-w_{n-1}$ 
by the definition of $|\w |_*$. 
Under the assumption that $w_1\leq \cdots \leq w_n$, 
we have the following lemma.

\begin{lemma}\label{lem:h_i}
If $\degw \omega <|\w |_*$, 
then $h_1,\ldots ,h_{n-1}$ belong to $k$, 
$h_n$ belongs to $k[\x \sm \{ x_n\} ]$ 
and $\degw h_n\leq w_n-w_{n-1}$. 
\end{lemma}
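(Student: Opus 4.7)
The plan is elementary degree bookkeeping on the expansion $\omega = \sum_{i=1}^n h_i \eta_i$. Since each $\eta_i$ has $\w$-degree $|\w| - w_i$, the hypothesis $\degw \omega < |\w|_*$ together with $\degw(h_i\eta_i) \leq \degw \omega$ yields the uniform bound
\begin{equation*}
\degw h_i + |\w| - w_i < |\w|_* \qquad \text{for every } i = 1, \ldots, n.
\end{equation*}
I would then analyze the cases $i \leq n-1$ and $i = n$ separately, exploiting the two ``halves'' of the defining set of $|\w|_*$: the auxiliary element $|\w| + w_1 - w_{n-1}$ on one side, and the infimum over elements of $\N w_1 + \cdots + \N w_{n-1}$ exceeding $|\w| - w_{n-1}$ on the other.

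For $i \leq n-1$, the inequality $w_i \leq w_{n-1}$ combined with $|\w|_* \leq |\w| + w_1 - w_{n-1}$ rearranges the displayed bound into $\degw h_i < w_1$. Since every nonconstant element of $\kx$ has $\w$-degree at least $w_1$, this forces $h_i \in k$.

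For $i = n$ I would first exclude the variable $x_n$ from $h_n$: any monomial of $h_n$ involving $x_n$ would have $\w$-degree at least $w_n$, yielding $\degw h_n + |\w| - w_n \geq |\w|$, but $|\w|_* \leq |\w| + w_1 - w_{n-1} \leq |\w|$ (using $w_1 \leq w_{n-1}$), contradicting the strict bound. Hence $h_n \in k[\x \sm \{x_n\}]$. For the refined inequality $\degw h_n \leq w_n - w_{n-1}$, I would use that when $h_n \neq 0$, each monomial of $h_n \in k[x_1, \ldots, x_{n-1}]$ makes $\degw h_n + |\w| - w_n$ an element of $\N w_1 + \cdots + \N w_{n-1}$, because $|\w| - w_n = w_1 + \cdots + w_{n-1}$ already contains each such $w_j$ with coefficient one. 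If $\degw h_n > w_n - w_{n-1}$ held, then $\degw h_n + |\w| - w_n > |\w| - w_{n-1}$ would lie in the first set defining $|\w|_*$ and hence be $\geq |\w|_*$, again contradicting the strict bound; the cases $h_n = 0$ and $h_n \in k^{\times}$ are immediate.

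The only real subtlety will be to invoke the two different defining elements of $|\w|_*$ at the right moments: $|\w| + w_1 - w_{n-1}$ is what controls the indices $i \leq n-1$ and the exclusion of $x_n$ from $h_n$, whereas the infimum over $\N w_1 + \cdots + \N w_{n-1}$ is what yields the final upper bound on $\degw h_n$. Everything else is formal manipulation.
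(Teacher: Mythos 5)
Your proposal is correct and follows essentially the same route as the paper: the uniform bound $\degw h_i+|\w|-w_i\leq\degw\omega<|\w|_*$, the use of $|\w|_*\leq|\w|+w_1-w_{n-1}$ to force $h_i\in k$ for $i<n$ and $h_n\in k[\x\sm\{x_n\}]$, and then the observation that $\degw h_n\eta_n\in\N w_1+\cdots+\N w_{n-1}$ combined with the first half of the definition of $|\w|_*$ to get $\degw h_n\leq w_n-w_{n-1}$. The paper packages the exclusion of $x_n$ from $h_n$ as the remark that $\degw f<w_i$ implies $f\in k[\x\sm\{x_i\}]$ rather than arguing on monomials, but this is only a cosmetic difference.
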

\begin{proof}
We remark that 
$\degw \eta _i
=|\w |-w_i$ 
for $i=1,\ldots ,n$. 
Hence, 
we have 
$$
\degw h_i+|\w |-w_i=\degw h_i\eta _i
\leq \degw \omega <|\w |_*\leq 
|\w |+w_1-w_{n-1}, 
$$
and so $\degw h_i<w_1-w_{n-1}+w_i$. 
Since $w_1\leq \cdots \leq w_n$ by assumption, 
we know that $\degw h_i<w_1$ 
for $1\leq i\leq n-1$, 
and $\degw h_n<w_n$. 
Thus, 
$h_i$ belongs to $k$ if $i<n$, 
and to $k[\x \sm \{ x_n\} ]$ if $i=n$. 
Since $\degw \eta _n=w_1+\cdots +w_{n-1}$, 
it follows that $\degw h_n\eta _n$ 
belongs to $\N w_1+\cdots +\N w_{n-1}$. 
Since $\degw h_n\eta _n\leq \degw \omega <|\w |_*$, 
this implies that 
$\degw h_n\eta _n\leq |\w |-w_{n-1}$ 
as remarked. 
Therefore, we have 
$\degw h_n\leq w_n-w_{n-1}$. 
\end{proof}

Now, 
we prove Lemma~\ref{lem:ind key}. 
Let $I$ be the set of $1\leq i<n$ 
such that $h_i\neq 0$. 
When $I=\emptyset $, 
the assertion holds for $\phi =\id _{\kx }$, 
$h=h_n$ and $l=n$. 
Hence, 
we may assume that $I\neq \emptyset $. 
We prove the assertion 
by induction on the number of elements of $I$. 
First, 
assume that $I=\{ r\} $ for some $1\leq r<n$. 
Then, 
we have $\omega =h_r\eta _r+h_n\eta _n$ 
with $h_r\neq 0$. 
Since $\degw \omega <|\w |_*$ by assumption, 
we know by Lemma~\ref{lem:h_i} that 
$h_r$ belongs to $k^{\times }$, 
$h_n$ belongs to $k[\x \sm \{ x_n\} ]$ 
and $\degw h_n\leq w_n-w_{n-1}$. 
By integrating $h_r^{-1}h_n$ with respect to $x_r$, 
we obtain $g\in k[\x \sm \{ x_n\} ]$ 
such that $\partial g/\partial x_r=h_r^{-1}h_n$ and 
$$
\degw g=\degw h_nx_r=\degw h_n+w_r
\leq (w_n-w_{n-1})+w_r\leq w_n. 
$$
Define $\phi \in \T_n(k)$ by 
$\phi (x_n)=x_n+(-1)^{n-r}g$ 
and $\phi (x_i)=x_i$ for each $i\neq n$. 
Then, 
we have 
$\degw \phi (f_i)=\degw f_i$ 
for each $i$ by Lemma~\ref{lem:deg pres aut} 
and the preceding inequality. 
Since $h_r$ and $h_n$ are fixed under $\phi $, 
we know by (\ref{eq:omega ^phi note}) 
and (\ref{eq:diff elem}) that 
\begin{align*}
&\omega ^{\phi }
=\phi (h_r)\eta _r^{\phi }+\phi (h_n)\eta _n^{\phi }
=h_r\eta _r^{\phi }+h_n\eta _n 
=h_r\left(\eta _r-\frac{\partial g}{\partial x_r}\eta _n\right)
+h_n\eta _n \\
&\quad =h_r(\eta _r-h_r^{-1}h_n\eta _n)+h_n\eta _n
=h_r\eta _r. 
\end{align*}
Therefore, 
the assertion is true when $\# I=1$.

Next, 
assume that $\#I\geq 2$. 
Let $r,s\in I$ be 
such that $1\leq r<s<n$. 
Since $h_1,\ldots ,h_{n-1}$ belong to $k$ 
by Lemma~\ref{lem:h_i}, 
we can define $\sigma \in \T_n(k)$ by 
$\sigma (x_s)=x_s+(-1)^{s-r}h_r^{-1}h_sx_r$ 
and $\sigma (x_i)=x_i$ for each $i\neq s$. 
Then, 
$\degw \sigma (f)=\degw f$ holds for each $f\in \kx $ 
by Lemma~\ref{lem:deg pres aut}, 
since $\degw h_r^{-1}h_sx_r=w_r\leq w_s$.  
By (\ref{eq:diff elem}), 
we have $\eta _r^{\sigma }=\eta _r-h_r^{-1}h_s\eta _s$ 
and $\eta _i^{\sigma }=\eta _i$ for each $i\neq r$. 
Put $f_i'=\sigma (f_i)$ for $i=1,\ldots ,n-1$. 
Then, 
it follows that 
\begin{align*}
&df_1'\wedge \cdots \wedge df_{n-1}'=\omega ^{\sigma }
=\sigma (h_r)\eta _r^{\sigma }
+\sum _{i\neq r,n}\sigma (h_i)\eta _i^{\sigma }
+\sigma (h_n)\eta _n^{\sigma } \\
&\quad =h_r(\eta _r-h_r^{-1}h_s\eta _s)+\sum _{i\neq r,n}h_i\eta _i
+\sigma (h_n)\eta _n
=\sum _{i\neq s,n}h_i\eta _i
+\sigma (h_n)\eta _n. 
\end{align*}
Since $\degw \sigma (h_n)=\degw h_n$, 
this implies that 
$$
\degw df_1'\wedge \cdots \wedge df_{n-1}'
\leq \degw \omega <|\w |_*. 
$$ 
Thus, 
by induction assumption, 
there exist $\tau \in \T_n(k)$, 
$h\in \kx $ and $1\leq l\leq n$ 
such that 
$$
d\tau (f_1')\wedge 
\cdots \wedge d\tau (f_{n-1}')=h\eta _l
$$
and $\degw \tau (f_i')=\degw f_i'$ 
for $i=1,\ldots ,n-1$. 
Then, 
$\phi :=\tau \circ \sigma $ is a tame automorphism 
such that $\omega ^{\phi }=h\eta _l$. 
Moreover, 
we have $\degw \phi (f_i)=\degw f_i$ 
for each $i$, 
since $\degw \sigma (f_i)=\degw f_i$. 
Therefore, 
the assertion is true when $\# I\geq 2$. 
This completes the proof of Lemma~\ref{lem:ind key}, 
and thereby completing the proof of Theorem~\ref{thm:|w|*}.

\section{Proof of Theorem~\ref{thm:main}}\label{sect:pf main}
\setcounter{equation}{0}

Throughout this section, 
we assume that $n=3$. 
The goal of this section 
is to complete the proof of Theorem~\ref{thm:main}.

\begin{lemma}\label{lem:pf of main}
For any $\w \in (\Gamma _+)^3$ 
and $(d_1,d_2,d_3)\in \mdegw (\Aut _k\kx )$, 
the following assertions hold$:$

\noindent{\rm (i)} 
{\rm (K1)} and {\rm (K2)} imply that 
at least two of 
$d_1$, $d_2$ and $d_3$ 
do not belong to $\{ w_1,w_2,w_3\} $.

\noindent{\rm (ii)} 
{\rm (K1)} and {\rm (K2)} imply that 
$\Delta _{\w }(d_i,d_j)\geq |\w |_*$ for each 
$i,j\in \{ 1,2,3\} $ with $i\neq j$.

\noindent{\rm (iii)} 
If $d_1$ and $d_2$ are linearly dependent over $\Z $, 
then {\rm (K2)} and {\rm (B1)} imply {\rm (B2)}.

\noindent{\rm (iv)} 
If $d_1$ and $d_2$ are linearly dependent over $\Z $, 
then {\rm (K1)}, {\rm (K2)}, {\rm (A)} and {\rm (B2)} 
imply {\rm (K3)}, {\rm (K4)} and {\rm (K5)}.

\noindent{\rm (v)} 
If $d_1$ and $d_2$ are linearly independent over $\Z $, 
then {\rm (K1)}, {\rm (K2)} and {\rm (A)} 
imply {\rm (K3)} and {\rm (K4)}. 
\end{lemma}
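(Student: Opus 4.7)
The plan is to handle the five assertions in order, since each uses the previous ones. Parts (i) and (ii) are structural, resting on Lemma~\ref{prop:deg F} together with its remark, and on Theorem~\ref{thm:|w|*}; parts (iii)--(v) are combinatorial, using (ii) to replace every ``$\max\{\Delta_{\w}(d_i,d_j),|\w|_*\}$'' appearing in (A) by the plain $\Delta_{\w}$-value, together with the Sylvester--Frobenius bound for a numerical semigroup in (iii). For (i), reorder indices so that $w_1\leq w_2\leq w_3$; Lemma~\ref{prop:deg F} yields $d_i\geq w_i$. If all three $d_i$ lay in $\{w_1,w_2,w_3\}$, distinctness under $d_1<d_2<d_3$ would force $(d_1,d_2,d_3)=(w_1,w_2,w_3)$, so the remark after Lemma~\ref{prop:deg F} would give $\degw F=|\w|$, contradicting (K1). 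Supposing exactly one $d_i$ lies outside, the lower bounds together with $d_1<d_2<d_3$ pin down the other two values as specific weights; the resulting $\w$-homogeneous components $f_i^{\w}$ are then forced to live in very small weight spaces, and the requirement that the Jacobian of $F$ be a nonzero constant is incompatible with the representation and divisibility prohibitions of (K2). For (ii), apply Theorem~\ref{thm:|w|*} to any $F'\in\T_3(k)\subset\Aut_k\kx$ with $\degw f'_1=d_i$ and $\degw f'_2=d_j$: its first hypothesis follows from (i), and its second from (K2) together with the trivial reverse divisibilities coming from $d_1<d_2<d_3$. Minimising over $F'$ yields $\Delta_{\w}(d_i,d_j)\geq|\w|_*$.

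For (iii), let $d=\gcd(d_1,d_2)$ and write $d_1=u_1d$, $d_2=u_2d$ with $\gcd(u_1,u_2)=1$. The inequality $d_1<d_2$ and the condition $d_2\notin \N d_1$ from (K2) give $2\leq u_1<u_2$. Assumption (B1) writes $d_3=kd$ for some $k\in\N$, and (K2) reads $k\notin \Zn u_1+\Zn u_2$; Sylvester--Frobenius then forces $k\leq u_1u_2-u_1-u_2$, whence $d_1+d_2+d_3\leq u_1u_2d=\lcm(d_1,d_2)<\lcm(d_1,d_2)+|\w|_*$, which is (B2).

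For (iv) and (v), case-split on which of (A1), (A2), (A3) holds. (A1) delivers the ``$\neq$''-clauses of (K3) and (K4) at once. (A2) supplies (K3) immediately after using (ii) to rewrite $\max\{\Delta_{\w}(d_2,d_3),|\w|_*\}$ as $\Delta_{\w}(d_2,d_3)$, and symmetrically (A3) supplies (K4). In (v), the linear independence of $d_1,d_2$ renders the equalities $3d_2=2d_3$ and $sd_1=2d_3$ mutually exclusive, so the three branches of (A) are disjoint and (K3), (K4) follow. The main obstacle is the overlap case in (iv): when $3d_2=2d_3$ and $sd_1=2d_3$ for some odd $s\geq 3$ hold simultaneously, one must recover whichever of (K3), (K4) is not delivered by the active branch of (A). The simultaneous equations combined with $\gcd(u_1,u_2)=1$ and $u_1\geq 2$ force $u_1=3$, $u_2=s$ with $s\geq 5$ odd, so $d_1=3d$, $d_2=sd$, $d_3=(3s/2)d$; (B2) then reduces to $(6-s)(d/2)<|\w|_*$, and this together with (ii) yields both required inequalities $d_1+d_2-d_3<\Delta_{\w}(d_2,d_3)$ and $d_1+d_2-d_3<\Delta_{\w}(d_1,d_3)$. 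The same pattern handles (K5) in the case $4d_1=3d_2$: (B2) becomes $d_3<5\gcd(d_1,d_2)+|\w|_*$, and Theorem~\ref{thm:|w|*} applied to the auxiliary pair $(2d_1,d_2)=(6\gcd(d_1,d_2),4\gcd(d_1,d_2))$---whose divisibility hypotheses are trivial and whose weight hypothesis follows from (i)---gives $\Delta_{\w}(2d_1,d_2)\geq|\w|_*$, upgrading (B2) to (K5).
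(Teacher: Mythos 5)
Parts (ii), (iii) and (v) of your argument are sound and essentially follow the paper ((iii) is even marginally more direct, since Sylvester's bound alone already yields $d_1+d_2+d_3\le\lcm(d_1,d_2)$ without invoking $\gcd(d_1,d_2)\le|\w|_*$), and the overlap case of (iv) for (K3)/(K4) --- writing $(d_1,d_2,d_3)=(3d,sd,(3s/2)d)$ and reducing (B2) to $(6-s)(d/2)<|\w|_*$ --- is the same computation the paper performs. The serious gap is in (i), which carries the real content of the lemma. The only nontrivial case is when exactly one of $d_1,d_2,d_3$ lies outside $\{w_1,w_2,w_3\}$, and your treatment of it ("the $f_i^{\w}$ are forced into small weight spaces, and the Jacobian condition is incompatible with (K2)") is an assertion, not an argument. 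Two steps are missing. First, to ``pin down'' the two remaining degrees you need $w_3\le d_2$ (ordering $w_1\le w_2\le w_3$); the paper obtains this by showing that $f_1$ or $f_2$ lies outside $k[x_1,x_2]$, which requires Proposition~\ref{prop:vdk} (the two-variable degree estimate) to exclude the case $\degw f_1+\degw f_2>w_1+w_2$ with $f_1,f_2\in k[x_1,x_2]$, together with a separate treatment of $(d_1,d_2)=(w_1,w_2)$ via (K1). Second, once $d_1=w_l$ and $d_2=w_3$, one must actually derive the contradiction with (K2): the paper constructs an explicit degree-preserving tame $\sigma$ with $\sigma(f_1)=x_l$, $\sigma(f_2)=x_3$ (justified by Lemma~\ref{lem:deg pres aut}), writes $\sigma(f_3)=\alpha x_m+p$ with $p\in k[x_l,x_3]$, and reads off $d_3=\degw p\in\Zn w_l+\Zn w_3=\Zn d_1+\Zn d_2$. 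The constancy of the Jacobian is not the property doing the work here, and nothing in your sketch substitutes for these steps.

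There is a second, smaller gap in the (K5) part of (iv): your claim that the weight hypothesis of Theorem~\ref{thm:|w|*} for the auxiliary pair $(2d_1,d_2)=(6d,4d)$ ``follows from (i)'' is false as stated. Assertion (i) concerns $d_1,d_2,d_3$, not $2d_1$, and it is entirely consistent with (i) that both $6d$ and $4d$ belong to $\{w_1,w_2,w_3\}$. An extra argument is needed: if both did, then (with $w_1\le w_2\le w_3$) necessarily $(2d_1,d_2)=(w_3,w_2)$, since $(w_2,w_1)$ would force $w_1=d_2>d_1$ and contradict Lemma~\ref{prop:deg F}; but then $\degw f_1=d_1<w_2$ forces $f_1\in k[x_1]$, hence $d_1=w_1$, so $d_1$ and $d_2$ both lie in $\{w_1,w_2,w_3\}$, contradicting (i). You must supply this (or an equivalent) step before invoking Theorem~\ref{thm:|w|*} for the pair $(2d_1,d_2)$.
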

\begin{proof}
(i) 
Let $F\in \Aut _k\kx $ be 
such that $\mdegw F=(d_1,d_2,d_3)$. 
By changing the indices of $x_1$, $x_2$ and $x_3$, 
and $w_1$, $w_2$ and $w_3$ if necessary, 
we may assume that $w_1\leq w_2\leq w_3$. 
Then, 
we have $d_i\geq w_i$ for $i=1,2,3$ 
by Lemma~\ref{prop:deg F}, 
since $d_1<d_2<d_3$ by (K1). 
First, 
we show that 
$f_1$ or $f_2$ does not belong to $k[x_1,x_2]$ 
by contradiction. 
Suppose that 
$f_1$ and $f_2$ 
both 
belong to $k[x_1,x_2]$. 
Then, 
$F':=(f_1,f_2)$ belongs to $\Aut _kk[x_1,x_2]$, 
and $f_3=\alpha x_3+p$ 
for some 
$\alpha \in k^{\times }$ and $p\in k[x_1,x_2]$. 
If $(d_1,d_2)\neq (w_1,w_2)$, 
then we have $\degw F'=d_1+d_2>w_1+w_2$, 
since $d_i\geq w_i$ for each $i$. 
Hence, 
$d_i$ belongs to $\N d_j$ for some 
$(i,j)\in \{ (1,2),(2,1)\} $ by Proposition~\ref{prop:vdk}. 
Since $d_1<d_2$, 
it follows that $d_2$ belongs to $\N d_1$, 
contradicting (K2). 
Assume that $(d_1,d_2)=(w_1,w_2)$. 
Then, 
we have $d_3>w_3$, 
since $d_1+d_2+d_3>|\w |$ by (K1). 
Hence, 
we get 
$$
d_3=\degw f_3=\degw {(\alpha x_3+p)}=\degw p. 
$$ 
Since $p$ is an element of $k[x_1,x_2]$, 
it follows that 
$d_3$ belongs to $\Zn w_1+\Zn w_2=\Zn d_1+\Zn d_2$. 
This contradicts (K2). 
Therefore, 
$f_1$ or $f_2$ 
does not belong to $k[x_1,x_2]$. 
Since $w_1\leq w_2\leq w_3$ 
and $d_1<d_2<d_3$, 
this implies that $w_3\leq d_2$. 
Now, 
suppose that (i) is false. 
Then, 
$d_1$ and $d_2$ must be at most $w_3$. 
Hence, 
we have $d_2=w_3$, 
and so $d_1=w_l$ for some $l\in \{ 1,2\} $. 
Thus, 
we may write 
$f_1=\alpha _1x_l+p_1$ 
and $f_2=\alpha _2x_3+p_2$, 
where $\alpha _1,\alpha _2\in k$, 
and $p_1\in k[x_1,\ldots ,x_{l-1}]$ 
and $p_2\in k[x_1,x_2]$ are such that 
$\degw p_1\leq w_l$ and $\degw p_2\leq w_3$. 
We note that $\alpha _2\neq 0$, 
for otherwise $f_1$ and $f_2$ both belong to $k[x_1,x_2]$. 
If $\alpha _1=0$, 
then 
$f_1=p_1$ belongs to $k[x_1,\ldots ,x_{l-1}]$. 
Since $f_1$ is not a constant, 
it follows that $l=2$, 
and $f_1=\alpha x_1+\beta $ 
for some $\alpha \in k^{\times }$ 
and $\beta \in k$. 
Hence, 
we may assume that $\alpha _1\neq 0$ 
by changing $l$ if necessary. 
Since $k[x_l-p_1,x_m]=k[x_1,x_2]$ 
holds for $m\in \{ 1,2\} $ with $m\neq l$, 
we can define $\sigma \in \Aut _kk[x_1,x_2]$ by 
$\sigma (x_l)=\alpha _1^{-1}(x_l-p_1)$ 
and $\sigma (x_m)=x_m$. 
Then, 
$\sigma $ preserves the $\w $-degree 
of each element of $k[x_1,x_2]$ 
by Lemma~\ref{lem:deg pres aut}. 
Since $p_2$ is an element of $k[x_1,x_2]$, 
we can extend $\sigma $ to an element of $\Aut _k\kx $ 
by setting $\sigma (x_3)=\alpha _2^{-1}(x_3-\sigma (p_2))$. 
Then, 
$\sigma $ also preserves 
the $\w $-degree of each element of $\kx $, 
since $\degw \sigma (p_2)=p_2\leq w_3$. 
By definition, 
we have $\sigma (f_1)=x_l$ and $\sigma (f_2)=x_3$, 
and so $\sigma (f_3)=\alpha x_m+p$ 
for some $\alpha \in k^{\times }$ and $p\in k[x_l,x_3]$. 
Since $\degw x_m=w_m\leq w_3=d_2<d_3$, 
we get 
$$
d_3=\degw f_3=\degw \sigma (f_3)
=\degw {(\alpha x_m+p)}=\degw p. 
$$
Therefore, 
$d_3$ belongs to 
$\Zn w_l+\Zn w_3=\Zn d_1+\Zn d_2$, 
contradicting (K2).

(ii) Take any $i,j\in \{ 1,2,3\} $ with $i\neq j$. 
Then, 
$d_i$ or $d_j$ 
does not belong to $\{ w_1,w_2,w_3\} $ by (i). 
Moreover, 
since $d_1<d_2<d_3$ by (K1), 
we see from (K2) that 
$d_i$ and $d_j$ do not belong to $\N d_j$ and $\N d_i$, 
respectively. 
Therefore, 
we have $\Delta _{\w }(d_i,d_j)\geq |\w |_*$ 
by the remark after Theorem~\ref{thm:|w|*}.

(iii) Since $d_1$ and $d_2$ 
are linearly dependent over $\Z $ by assumption, 
we may write $d_i=u_id$ for $i=1,2$, 
where $u_1,u_2\in \N $ 
are such that $\gcd (u_1,u_2)=1$, 
and $d=\gcd (d_1,d_2)$. 
Now, 
suppose that (B2) is not satisfied. 
Then, 
we have 
$$
d_3\geq 
\lcm (d_1,d_2)-d_1-d_2+|\w |_*
=(u_1u_2-u_1-u_2)d+|\w |_*. 
$$
Since $|\w |_*\geq d$ and $d_3=u_3d$ 
for some $u_3\in \N $ by (B1), 
it follows that 
$$
u_3\geq (u_1u_2-u_1-u_2)+1=(u_1-1)(u_2-1). 
$$
Hence, 
$u_3$ belongs to $\Zn u_1+\Zn u_2$ 
by Sylvester's formula for the Frobenius number. 
Therefore, 
$d_3$ belongs to $\Zn d_1+\Zn d_2$, 
contradicting (K2).

(iv) 
First, we check (K3) and (K4). 
By (ii), 
we have 
\begin{equation}\label{eq:delta bound}
\max \{ \Delta _{\w }(d_i,d_j),|\w |_*\} 
=\Delta _{\w }(d_i,d_j)
\end{equation}
for each $i\neq j$. 
Hence, 
(A) implies (K3) and (K4) 
if $3d_2\neq 2d_3$ or $sd_1\neq 2d_3$ 
for any odd number $s\geq 3$. 
Assume that $3d_2=2d_3$ and $sd_1=2d_3$ 
for some odd number $s\geq 3$. 
Then, 
there exists $d\in \Gamma _+$ 
such that $d_1=6d$, $d_2=2sd$ and $d_3=3sd$. 
Since $d_2$ does not belong to $\N d_1$ by (K2), 
we have $3\nmid s$, 
and hence $\lcm (d_1,d_2)=6sd=2d_3$. 
Thus, 
(B2) implies that 
$$
d_1+d_2<\lcm (d_1,d_2)-d_3+|\w |_*
=d_3+|\w |_*
\leq d_3+\Delta _{\w }(d_i,d_3) 
$$ 
for $i=1,2$ in view of (ii). 
Therefore, 
we get (K3) and (K4).

To show (K5), 
we may assume that $4d_1=3d_2$. 
Write $d_1=3d$ and $d_2=4d$, 
where $d=\gcd (d_1,d_2)$. 
Then, 
we have 
\begin{equation}\label{eq:lemma:5d}
d_3<\lcm (d_1,d_2)-d_1-d_2+|\w |_*
=(3\cdot 4-3-4)d+|\w |_*
=5d+|\w |_* 
\end{equation}
by (B2). 
Hence, 
it suffices to show that 
$|\w |_*\leq \Delta _{\w }(2d_1,d_2)$. 
By the remark after Theorem~\ref{thm:|w|*}, 
we have only to check that 
$2d_1$ and $d_2$ do not belong to 
$\N d_2$ and $\N 2d_1$, 
respectively, 
and that $2d_1$ or $d_2$ 
does not belong to $\{ w_1,w_2,w_3\} $. 
Since $2d_1=6d$ and $d_2=4d$, 
the first part is clear. 
We prove the last part by contradiction. 
Suppose that 
$2d_1$ and $d_2$ 
both belong to $\{ w_1,w_2,w_3\} $. 
We may assume that $w_1\leq w_2\leq w_3$ 
as in the proof of (i). 
Then, 
$(2d_1,d_2)$ is equal to $(w_2,w_1)$ or $(w_3,w_2)$, 
since $2d_1=6d>4d=d_2$. 
We claim that the former case does not occur, 
for otherwise $w_i\geq w_1=d_2>d_1$ for any $i$, 
contradicting Lemma~\ref{prop:deg F}. 
Hence, 
we have $d_2=w_2$. 
Since $d_1<d_2$, 
this implies that $f_1$ belongs to $k[x_1]$, 
and hence is a linear polynomial 
in $x_1$ over $k$. 
Thus, 
we get $d_1=w_1$. 
Therefore, 
$d_1$ and $d_2$ belong to 
$\{ w_1,w_2,w_3\} $, 
contradicting (i).

(v) 
By (K1) and (K2), 
we have (\ref{eq:delta bound}). 
Since $d_1$ and $d_2$ are linearly independent over $\Z $, 
we have $3d_2\neq 2d_3$ or $sd_1\neq 2d_3$ 
for any odd number $s\geq 3$. 
Hence, 
(A) implies (K3) and (K4). 
\end{proof}

Now, 
let us prove Theorem~\ref{thm:main}. 
Suppose to the contrary that 
there exists $F\in \T_3(k)$ such that 
$\mdegw F=(d_1,d_2,d_3)$. 
First, 
assume that $d_1$ and $d_2$ 
are linearly independent over $\Z $. 
Then, 
we have (K3) and (K4) 
by Lemma~\ref{lem:pf of main} (v). 
Hence, 
$(d_1,d_2,d_3)$ does not belong to 
$\mdegw \T_3(k)$ by Theorem~\ref{prop:main} (ii), 
a contradiction. 
Next, 
assume that $d_1$ and $d_2$ 
are linearly dependent over $\Z $. 
Then, 
we have (B1) or (B2). 
On account of Lemma~\ref{lem:pf of main} (iii), 
we may assume that (B2) is satisfied. 
Then, 
we get (K3), (K4) and (K5) 
by Lemma~\ref{lem:pf of main} (iv). 
Since $F$ is tame by supposition, 
we have 
$$
\degw df_1\wedge df_2
\geq \Delta _{\w }(d_1,d_2)\geq |\w |_* 
$$
by Lemma~\ref{lem:pf of main} (ii). 
Hence, 
it follows from (B2) that 
$$
d_1+d_2+d_3
<\lcm (d_1,d_2)+|\w |_*
\leq \lcm (d_1,d_2)+\degw df_1\wedge df_2. 
$$
Thus, 
$F$ does not belong to $\T_3(k)$ 
by Theorem~\ref{prop:main} (i), 
a contradiction. 
Therefore, 
$(d_1,d_2,d_3)$ does not belong to $\mdegw \T_3(k)$. 
This completes the proof of Theorem~\ref{thm:main}. 

\section{Applications}
\label{sect:appl}
\setcounter{equation}{0}

In this section, 
we give applications of our theorems. 
Throughout, 
$d_1$, $d_2$ and $d_3$ 
denote positive integers. 
First, 
we briefly discuss when the assumptions of 
Theorem~\ref{thm:total degree} are fulfilled. 
Set 
$$
d_i'=\frac{d_i}{\gcd (d_1,d_2,d_3)}
$$
for $i=1,2,3$. 
Clearly, 
$d_i'$ is an odd number 
if so is $d_i$ for each $i$.

In this notation, 
we have the following lemma.

\begin{lemma}\label{lem:a total}
Assume that $d_1,d_2,d_3\in \N $ 
satisfy $d_1\leq d_2\leq d_3$ and {\rm (c)}. 
Then, 
we have {\rm (a)} 
if one of the following conditions holds$:$

\nd {\rm (1)} 
$d_1'$ is an odd number, 
and $d_2'$ is an odd number 
or $d_3'\not\equiv 0\pmod{3}$.

\nd {\rm (2)} 
$d_1\neq 2\gcd (d_1,d_3)$ and 
$d_2'$ is an odd number.

\nd {\rm (3)} 
$d_1'\equiv 0\pmod{2^l}$ for some $l\geq 2$, 
and $d_2'$ and $d_3'$ are odd numbers.

\nd {\rm (4)} 
$d_3$ is a prime number.

\nd {\rm (5)} 
$d_3-d_2\geq d_1-2$.

\nd {\rm (6)} 
$d_1'$ is an odd number, 
and $3d_2\neq 2d_3$ or $2d_1\leq d_2+5$. 
\end{lemma}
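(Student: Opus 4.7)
The plan is to use the equivalence noted just after Theorem~\ref{thm:total degree}: condition (a) is equivalent to the disjunction of (a1), (a2$'$), and (a2$''$). I would therefore assume that (a1) fails and show in each of the six subcases that one of (a2$'$), (a2$''$), or directly (a2) holds --- or that we reach an outright contradiction with hypothesis (c). Failure of (a1) places us in \emph{Case A} ($3d_2=2d_3$) or \emph{Case B} ($sd_1=2d_3$ for some odd $s\geq 3$); the task in Case A is to verify $2d_1\leq d_2+4$, and in Case B to verify $2d_2\leq (s-2)d_1+4$.

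Two parity facts do most of the work. In Case A, dividing by $\gcd(d_1,d_2,d_3)$ gives $3d_2'=2d_3'$, so $d_2'$ is even and $d_3'\equiv 0\pmod 3$. In Case B, the analogous relation $sd_1'=2d_3'$ forces $d_1'$ to be even, since $s$ is odd. These facts immediately handle (1), because both of its hypotheses contradict the two parity conclusions; they also dispose of (3), where additionally $sd_1'=2d_3'$ combined with $2^l\mid d_1'$ and $d_3'$ odd gives $2^l\mid 2$, ruling out $l\geq 2$; and they handle the Case~B half of (6).

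For (2), Case A is excluded because $d_2'$ cannot be both odd and even. In Case B, writing $h=\gcd(d_1,d_3)$ and $d_1=ha$, $d_3=hb$ with $\gcd(a,b)=1$, the relation $sa=2b$ with $s$ odd forces $a\mid 2$; the choice $a=1$ makes $s=2b$ even, a contradiction, so $a=2$ and hence $d_1=2h$, contradicting the hypothesis. For (4), in Case A the equation $3d_2=2d_3$ with $d_3$ prime forces $d_3=3$ and $d_2=2$; then $d_1\leq 2$ makes $d_1$ divide $d_2$, violating (c). In Case B, $s$ is an odd divisor of $2d_3$ which is at least $3$; since $d_3$ is prime this forces $s=d_3$ and $d_1=2$, whereupon (a2$''$) reads $d_2\leq d_3$, which holds by hypothesis.

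Case (5) is immediate, since $d_3-d_2\geq d_1-2$ is just a rewriting of (a2). The only delicate point is Case~A of (6): the hypothesis supplies only $2d_1\leq d_2+5$, while (a2$'$) demands $2d_1\leq d_2+4$. The key observation --- and the main obstacle in the argument --- is a parity upgrade: in Case~A, $3d_2=2d_3$ forces $d_2$ to be even, so $d_2+5$ is odd while $2d_1$ is even; an even integer bounded above by an odd integer is bounded above by the preceding even integer, so $2d_1\leq d_2+5$ automatically strengthens to $2d_1\leq d_2+4$. Apart from spotting this parity upgrade in (6), the proof is routine divisibility bookkeeping on $d_1'$, $d_2'$, $d_3'$.
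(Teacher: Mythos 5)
Your proposal is correct and follows essentially the same route as the paper: cases (1)--(3) and the second half of (6) reduce to parity of $d_1'$, $d_2'$, $d_3'$ forced by $3d_2'=2d_3'$ or $sd_1'=2d_3'$, case (5) is the trivial rewriting of (a2), and the parity upgrade from $2d_1\leq d_2+5$ to $2d_1\leq d_2+4$ in (6) is exactly the paper's key step. The only (harmless) divergence is in case (4) with $sd_1=2d_3$: the paper derives a contradiction with (c) by noting $d_1=2$ forces $d_2,d_3$ odd and hence $d_3\in \Zn d_1+\Zn d_2$, whereas you verify (a2$''$) directly as $d_2\leq d_3$; both are valid.
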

\begin{proof}
It is easy to see that both (1) and (2) imply (a1). 
In the case of (3), 
we have $3d_2\neq 2d_3$ 
since $d_2'$ is an odd number. 
Since 
$d_1'\equiv 0\pmod{2^l}$ for some $l\geq 2$, 
and $d_3'$ is an odd number, 
$sd_1\neq 2d_3$ holds for any odd number $s$. 
Thus, 
we get (a1). 
We prove that (4) implies (a1) by contradiction. 
If $3d_2=2d_3$, then we have $d_2=2$. 
Since $d_1\leq d_2\leq d_3$, 
this implies that $d_1=1$ or $d_1=d_2$, 
contradicting (c). 
If $sd_1=2d_3$ for some odd number $s\geq 3$, 
then we have $d_1=2$. 
Hence, 
$d_2$ and $d_3$ must be odd numbers in view of (c). 
Since $d_2\leq d_3$, 
it follows that 
$d_3=2l+d_2
=ld_1+d_2$ for some $l\in \Zn $, 
contradicting (c). 
Therefore, 
(4) implies (a1). 
Clearly, 
(5) is equivalent to (a2). 
Finally, 
we consider the case (6). 
Since $d_1'$ is an odd number, 
we have (a1) if $3d_2\neq 2d_3$. 
So assume that $3d_2=2d_3$. 
Then, 
$d_2$ is an even number. 
Since $2d_1\leq d_2+5$ by assumption, 
it follows that $2d_1\leq d_2+4$. 
Thus, 
by noting $3d_2=2d_3$, 
we get 
$d_1\leq d_2/2+2=d_3-d_2+2$. 
Therefore, 
(a2) is satisfied. 
\end{proof}

By means of Lemma~\ref{lem:a total}, 
we get the following corollary to 
Theorem~\ref{thm:total degree}. 
Here, we note that 
the assumption (\ref{eq:b}) is fulfilled 
whenever $\gcd (d_1,d_2)=1$.

\begin{corollary}\label{cor:karas general}
Let $d_1,d_2,d_3\in \N $ be such that $d_1\leq d_2\leq d_3$, 
and 
\begin{equation}\label{eq:b}
\gcd (d_1,d_2,d_3)=\gcd (d_1,d_2)\leq 3
\quad \text{or}\quad 
d_1+d_2+d_3\leq \lcm (d_1,d_2)+2. 
\end{equation}
Assume that one of {\rm (1)} through {\rm (6)} 
in Lemma~{\rm \ref{lem:a total}} is satisfied. 
Then, 
$(d_1,d_2,d_3)$ belongs to $\mdeg \T_3(k)$ 
if and only if 
$d_1\mid d_2$ or $d_3$ belongs to $\Zn d_1+\Zn d_2$. 
\end{corollary}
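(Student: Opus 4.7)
The plan is to verify that this corollary is a direct packaging of three earlier results: Proposition~\ref{prop:Karas345} handles the ``if'' direction, while Theorem~\ref{thm:total degree} combined with Lemma~\ref{lem:a total} handles the ``only if'' direction. No new ideas are required; the work is merely matching hypotheses.

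For the ``if'' direction, I would argue as follows. Assume $d_1\mid d_2$ or $d_3\in \Zn d_1+\Zn d_2$. In the first case, $d_2=ud_1\in \Zn d_1$ for some $u\in \N $, so Proposition~\ref{prop:Karas345} applied with $n=3$ and $i=2$ yields $(d_1,d_2,d_3)\in \mdeg \T_3(k)$. In the second case, the same proposition applied with $i=3$ gives the same conclusion.

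For the ``only if'' direction, I would proceed by contrapositive and appeal to Theorem~\ref{thm:total degree}. Assume $d_1\nmid d_2$ and $d_3\notin \Zn d_1+\Zn d_2$; this is precisely condition~(c) of Theorem~\ref{thm:total degree}. The displayed hypothesis~(\ref{eq:b}) of the corollary is exactly the disjunction (b1) or (b2), which is condition~(b). Finally, Lemma~\ref{lem:a total} states that, under~(c), any one of (1) through (6) implies~(a). Thus all three of (a), (b), (c) hold, and Theorem~\ref{thm:total degree} gives $(d_1,d_2,d_3)\notin \mdeg \T_3(k)$, which is the contrapositive of what we want.

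There is no genuine obstacle in the argument; the only care needed is to observe that the two clauses of~(\ref{eq:b}) are literally (b1) and (b2) as defined before Theorem~\ref{thm:total degree}, and that Lemma~\ref{lem:a total} is stated precisely so that each of (1)--(6) gives~(a) under the assumption~(c). Thus the corollary is a clean application of the main theorem, with conditions~(1)--(6) serving as combinatorially transparent sufficient conditions for the somewhat technical condition~(a).
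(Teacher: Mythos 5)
Your proposal is correct and matches the paper's own (very brief) justification exactly: the ``if'' direction is Kara\'s's Proposition~\ref{prop:Karas345}, and the ``only if'' direction is the contrapositive application of Theorem~\ref{thm:total degree}, with hypothesis~(\ref{eq:b}) supplying (b) and Lemma~\ref{lem:a total} supplying (a) under (c). No differences worth noting.
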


The ``only if" part of Corollary~\ref{cor:karas general} 
readily follows 
from Theorem~\ref{thm:total degree}, 
while the ``if" part is due to 
Kara\'s~\cite[Proposition~2.2]{345} 
(cf.~Proposition~\ref{prop:Karas345}).

In the following, 
we prove a variety of Kara\'s type theorems 
using our main results. 
The ``if" parts of Corollaries~\ref{cor:KZ} 
through \ref{cor:tousa2} below 
follow from the proposition of Kara\'s as above. 
So we only check 
(the contrapositions of) 
the ``only if" parts. 
Corollaries~\ref{cor:KZ} through \ref{cor:LiDu1} 
are special cases of Corollary~\ref{cor:karas general}. 
We prove Corollaries~\ref{cor:tousa}, \ref{cor:tousa2} and \ref{cor:jc} 
using Theorem~\ref{thm:total degree}, 
and Corollary~\ref{cor:Kanehira} 
using Theorem~\ref{thm:main}.

The following result is given by 
Kara\'s-Zygad\l o~\cite[Theorem 2.1]{KZ}. 
This is a special case of 
(1) of Corollary~\ref{cor:karas general}.

\begin{corollary}[Kara\'s-Zygad\l o]\label{cor:KZ}
Let $d_3\geq d_2\geq d_1\geq 3$ be integers 
such that $d_1$ and $d_2$ 
are odd numbers with $\gcd (d_1,d_2)=1$. 
Then, 
$(d_1,d_2,d_3)$ belongs to $\mdeg \T_3(k)$ 
if and only if $d_3$ belongs to $\Zn d_1+\Zn d_2$. 
\end{corollary}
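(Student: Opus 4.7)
The plan is to deduce the corollary directly from Corollary~\ref{cor:karas general} by verifying its hypotheses, where condition (1) of Lemma~\ref{lem:a total} will be the relevant one. The ``if" direction is noted in the excerpt as being due to Kara\'s (Proposition~\ref{prop:Karas345}), so only the ``only if" part requires work, and I would prove its contrapositive: assuming $d_3\notin \Zn d_1+\Zn d_2$, show that $(d_1,d_2,d_3)\notin \mdeg \T_3(k)$.

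Concretely, I would first check that condition (c) holds. Since $\gcd(d_1,d_2)=1$ and $d_1\geq 3$, $d_1$ cannot divide $d_2$; together with the standing hypothesis $d_3\notin \Zn d_1+\Zn d_2$, this gives (c). Next I verify (\ref{eq:b}): because $\gcd(d_1,d_2)=1$, also $\gcd(d_1,d_2,d_3)=1$, so the first clause of (\ref{eq:b}) holds since $1\leq 3$. Finally, I check hypothesis (1) of Lemma~\ref{lem:a total}: with $\gcd(d_1,d_2,d_3)=1$, we have $d_i'=d_i$ for each $i$, and by hypothesis both $d_1$ and $d_2$ are odd, so $d_1'$ and $d_2'$ are odd, fulfilling condition (1). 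Applying Corollary~\ref{cor:karas general} then yields that $(d_1,d_2,d_3)\notin\mdeg \T_3(k)$, completing the proof.

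There is essentially no analytic obstacle here; the corollary is a clean specialization of the main applied result. The only point one needs to be careful about is the very mild deduction $d_1\nmid d_2$ from the coprimality assumption together with $d_1\geq 3$ (the bound $d_1\geq 3$ is exactly what rules out the trivial divisibility $1\mid d_2$). Everything else is a direct translation of hypotheses, so the proof is short and the ``hard work" is entirely packaged inside Theorem~\ref{thm:total degree} and Lemma~\ref{lem:a total}.
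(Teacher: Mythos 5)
Your proposal is correct and follows exactly the paper's route: the paper likewise observes that this corollary is a special case of Corollary~\ref{cor:karas general} via condition (1) of Lemma~\ref{lem:a total}, with the ``if'' part coming from Proposition~\ref{prop:Karas345}. Your verifications of (c), of (\ref{eq:b}) via $\gcd(d_1,d_2,d_3)=\gcd(d_1,d_2)=1$, and of condition (1) (both $d_1'$ and $d_2'$ odd) are all accurate.
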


The following result is given by 
Kara\'s~\cite[Theorem 1.1]{3dd}.

\begin{corollary}[Kara\'s]\label{cor:Karas3dd}
Let $d_3\geq d_2\geq 3$ be integers. Then, 
$(3,d_2,d_3)$ belongs to $\mdeg \T_3(k)$ 
if and only if 
$3\mid d_2$ or $d_3$ belongs to $\Zn 3+\Zn d_2$. 
\end{corollary}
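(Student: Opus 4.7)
The plan is to deduce Corollary~\ref{cor:Karas3dd} as a direct specialization of Corollary~\ref{cor:karas general} with $d_1=3$. The ``if'' direction is immediate from Proposition~\ref{prop:Karas345}, so the content is in the contrapositive of the ``only if'' direction: assuming $3\nmid d_2$ and $d_3\notin \Zn 3+\Zn d_2$, I aim to conclude that $(3,d_2,d_3)\notin \mdeg \T_3(k)$.

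First I would verify the numerical assumption (\ref{eq:b}) of Corollary~\ref{cor:karas general}. Since $3\nmid d_2$, we have $\gcd(3,d_2)=1$, and hence $\gcd(3,d_2,d_3)=1=\gcd(3,d_2)\leq 3$, so the first alternative of (\ref{eq:b}) holds. Next, I would check condition (1) of Lemma~\ref{lem:a total}. Because $\gcd(d_1,d_2,d_3)=1$, we have $d_1'=d_1=3$, which is odd. Moreover, the hypothesis $d_3\notin \Zn 3+\Zn d_2$ forces $3\nmid d_3$, since otherwise $d_3=(d_3/3)\cdot 3+0\cdot d_2$ would lie in $\Zn 3+\Zn d_2$; thus $d_3'$ (which divides $d_3$) satisfies $d_3'\not\equiv 0\pmod{3}$ as well. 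So condition (1) of Lemma~\ref{lem:a total} is satisfied.

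At this point I simply invoke Corollary~\ref{cor:karas general}, which yields that $(3,d_2,d_3)\in \mdeg \T_3(k)$ if and only if $3\mid d_2$ or $d_3\in \Zn 3+\Zn d_2$. Under our assumptions both alternatives fail, so $(3,d_2,d_3)\notin \mdeg \T_3(k)$, as required. There is no genuine obstacle here, as the statement is tailored to be a clean specialization of the general framework; the only delicate point is keeping track of the normalization $d_i'=d_i/\gcd(d_1,d_2,d_3)$ when reading off condition (1) of Lemma~\ref{lem:a total}, and that reduces to the observation that $\gcd(3,d_2,d_3)=1$ under the hypothesis $3\nmid d_2$.
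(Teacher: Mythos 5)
Your proposal is correct and follows the paper's own route: the paper likewise disposes of this corollary by declaring it a special case of Corollary~\ref{cor:karas general} (the ``if'' part coming from Proposition~\ref{prop:Karas345}), and your verification that $\gcd(3,d_2)=1$ gives (\ref{eq:b}) and that $3\nmid d_3$ yields condition (1) of Lemma~\ref{lem:a total} correctly fills in the details the paper leaves implicit. (One could equally invoke condition (6), since $2d_1=6\leq d_2+5$ holds automatically, but your choice is just as valid.)
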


This result of Kara\'s was generalized by 
Sun-Chen~\cite[Theorem 3.3]{SunChen} 
as follows. 
Here, 
we mention that 
Kara\'s~\cite[Theorem 7.1]{Karas thesis} 
independently obtained a similar result 
in which (iii) is replaced by 
the stronger assumption that $d_2\geq 2d_1-3$.

\begin{corollary}[Sun-Chen]
\label{cor:SunChen}
Let $d_3\geq d_2\geq d_1\geq 3$ be integers 
with $d_1$ a prime number. 
Assume that 
one of the following conditions holds$:$ 

{\rm (i)} $d_2/\gcd (d_2,d_3)\neq 2;$ \quad 
{\rm (ii)} $d_3/\gcd (d_2,d_3)\neq 3;$ \quad 
{\rm (iii)} $d_2\geq 2d_1-5$. \\
Then, 
$(d_1,d_2,d_3)$ belongs to $\mdeg \T_3(k)$ 
if and only if 
$d_1\mid d_2$ or $d_3$ belongs to $\Zn d_1+\Zn d_2$. 
\end{corollary}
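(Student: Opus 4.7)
The plan is to deduce Corollary~\ref{cor:SunChen} from Corollary~\ref{cor:karas general} (equivalently, from Theorem~\ref{thm:total degree}). The ``if'' direction is already covered by Proposition~\ref{prop:Karas345}, so I only need to check the contrapositive of the ``only if'' direction: assuming $d_1\nmid d_2$ and $d_3\notin \Zn d_1+\Zn d_2$, I must show $(d_1,d_2,d_3)\notin \mdeg\T_3(k)$.

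First I would set up the common framework. Since $d_1\ge 3$ is prime and $d_1\nmid d_2$, one has $\gcd(d_1,d_2)=1$. Consequently $\gcd(d_1,d_2,d_3)=1=\gcd(d_1,d_2)\le 3$, so the first alternative of (\ref{eq:b}) in Corollary~\ref{cor:karas general} is satisfied, and moreover $d_1'=d_1$ is an odd number. It therefore remains only to verify, under any one of the hypotheses (i), (ii), (iii), at least one of the six conditions of Lemma~\ref{lem:a total}. I plan to check in each case that condition~(6) of that lemma holds, namely that $d_1'$ is odd and that either $3d_2\ne 2d_3$ or $2d_1\le d_2+5$.

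The key elementary observation for (i) and (ii) is that $3d_2=2d_3$ forces $d_2/\gcd(d_2,d_3)=2$ and $d_3/\gcd(d_2,d_3)=3$. Indeed, writing $g=\gcd(d_2,d_3)$, $d_2=ga$, $d_3=gb$ with $\gcd(a,b)=1$, the equation $3a=2b$ together with $\gcd(a,b)=1$ yields $(a,b)=(2,3)$. Hence hypothesis (i) or (ii) of the corollary directly implies $3d_2\ne 2d_3$, which combined with $d_1'$ odd gives (6) of Lemma~\ref{lem:a total}. Hypothesis (iii), namely $d_2\ge 2d_1-5$, is literally the second clause $2d_1\le d_2+5$ of (6), so again (6) holds. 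In all three cases the conditions of Corollary~\ref{cor:karas general} are met, and the conclusion $(d_1,d_2,d_3)\notin\mdeg\T_3(k)$ follows.

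There is no real obstacle here: the proof is a bookkeeping exercise that reduces the three hypotheses (i), (ii), (iii) to a single uniform condition~(6) of Lemma~\ref{lem:a total}. The only point requiring a small argument is the equivalence between $3d_2=2d_3$ and $\bigl(d_2/\gcd(d_2,d_3),d_3/\gcd(d_2,d_3)\bigr)=(2,3)$, which controls cases (i) and (ii) simultaneously. Everything else is a direct appeal to the already-established Corollary~\ref{cor:karas general}.
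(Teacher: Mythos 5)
Your proposal is correct and matches the paper's own argument, which likewise deduces the ``only if'' part from case (6) of Corollary~\ref{cor:karas general} by noting that $\gcd(d_1,d_2)=1$ when $d_1\nmid d_2$ (so $d_1'=d_1$ is odd and (\ref{eq:b}) holds), that (i) or (ii) is equivalent to $3d_2\neq 2d_3$, and that (iii) is exactly $2d_1\leq d_2+5$. No gaps.
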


The ``only if" part of Corollary~\ref{cor:SunChen} 
follows from (6) of Corollary~\ref{cor:karas general}. 
In fact, 
(i) or (ii) holds if and only if $3d_2\neq 2d_3$, 
and $d_1$ is an odd prime number by assumption. 
Moreover, 
we have $\gcd (d_1,d_2)=1$ 
if $d_1\nmid d_2$.

The following result is given by 
Kara\'s~\cite[Theorems 6.2 and 6.10]{Karas thesis}.

\begin{corollary}[Kara\'s]
\label{cor:Karas4dd}
{\rm (i)} 
Let $d_2$ and $d_3$ be odd numbers with 
$d_3\geq d_2\geq 5$. 
Then, 
$(4,d_2,d_3)$ belongs to $\mdeg \T_3(k)$ 
if and only if 
$d_3$ belongs to $\Zn 4+\Zn d_2$.

\nd {\rm (ii)} 
Let $d_2$ and $d_3$ 
be an odd number and an even number, 
respectively, 
such that 
$d_3\geq d_2\geq 5$ 
and $d_3-d_2\neq 1$. 
Then, 
$(4,d_2,d_3)$ belongs to $\mdeg \T_3(k)$ 
if and only if 
$d_3$ belongs to $\Zn 4+\Zn d_2$. 
\end{corollary}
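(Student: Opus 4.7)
The plan is to deduce both parts from Corollary~\ref{cor:karas general} applied with $d_1=4$. Since the ``if'' direction is already covered by Proposition~\ref{prop:Karas345}, I only need to prove the contrapositive of the ``only if'' direction: assuming $4\nmid d_2$ (automatic since $d_2$ is odd) and $d_3\notin \Zn 4+\Zn d_2$, conclude that $(4,d_2,d_3)\notin \mdeg \T_3(k)$.

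I would first verify hypothesis (\ref{eq:b}) uniformly in both parts. Since $d_2$ is odd, $\gcd(4,d_2)=1$, whence $\gcd(4,d_2,d_3)=1=\gcd(4,d_2)\leq 3$, so the first alternative of (\ref{eq:b}) holds. It then remains to exhibit one of the six conditions of Lemma~\ref{lem:a total} in each case. Note that, since $\gcd(d_1,d_2,d_3)=1$ in both parts, we have $d_1'=4$, $d_2'=d_2$ and $d_3'=d_3$.

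For part (i), both $d_2'$ and $d_3'$ are odd while $d_1'=4\equiv 0\pmod{2^2}$, so condition (3) of Lemma~\ref{lem:a total} is immediate. For part (ii), condition (3) fails because $d_3'$ is even, and condition (2) can fail as well (precisely when $d_3\equiv 2\pmod 4$, for then $d_1=2\gcd(d_1,d_3)$); the right tool is condition (5), which requires $d_3-d_2\geq d_1-2=2$. This is where the extra assumption $d_3-d_2\neq 1$ enters: since $d_2$ is odd and $d_3$ is even, the difference $d_3-d_2$ is a positive odd integer, and excluding the value $1$ forces $d_3-d_2\geq 3\geq 2$. Invoking Corollary~\ref{cor:karas general} then yields the result in both cases. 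The only genuine obstacle is recognising which clause of Lemma~\ref{lem:a total} applies in part (ii); the seemingly technical hypothesis $d_3-d_2\neq 1$ is tailored exactly to secure clause (5) via the parity mismatch between $d_2$ and $d_3$.
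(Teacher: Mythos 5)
Your proposal is correct and follows essentially the same route as the paper: hypothesis (\ref{eq:b}) via $\gcd(4,d_2)=1$, part (i) as an instance of clause (3) of Corollary~\ref{cor:karas general}, and part (ii) via clause (5), with the hypothesis $d_3-d_2\neq 1$ used exactly to secure $d_3-d_2\geq 2$. The only cosmetic difference is that you get $d_3-d_2\geq 2$ from the parity mismatch, whereas the paper excludes $d_3=d_2$ by noting $d_3\in\Zn 4+\Zn d_2$ would otherwise hold; both are valid.
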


In both (i) and (ii) of 
Corollary~\ref{cor:Karas4dd}, 
we have $\gcd (4,d_2)=1$ 
since $d_2$ is an odd number. 
Hence, 
(i) is a special case of (3) of 
Corollary~\ref{cor:karas general}. 
The ``only if" part of (ii) follows from 
(5) of Corollary~\ref{cor:karas general}. 
In fact, 
if $d_3$ does not belong to $\Zn 4+\Zn d_2$, 
then 
we have $d_3-d_2\geq 2=4-2$, 
since $d_3\geq d_2$ and $d_3-d_2\neq 1$ 
by assumption.

The following result is given by 
Li-Du~\cite[Theorems 3.3 and 4.2]{LiDu1}.

\begin{corollary}[Li-Du]
\label{cor:LiDu1}
Let $d_3\geq d_2\geq d_1\geq 3$ be integers. 

\noindent
{\rm (i)} 
When $d_1/\gcd (d_1,d_3)\neq 2$ 
and $d_2$ is a prime number, 
$(d_1,d_2,d_3)$ belongs to $\mdeg \T_3(k)$ 
if and only if 
$d_1=d_2$ or $d_3$ belongs to $\Zn d_1+\Zn d_2$.

\noindent{\rm (ii)} 
When $\gcd (d_1,d_2)=1$ 
and $d_3$ is a prime number, 
$(d_1,d_2,d_3)$ belongs to $\mdeg \T_3(k)$ 
if and only if 
$d_3$ belongs to $\Zn d_1+\Zn d_2$. 
\end{corollary}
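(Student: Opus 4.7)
The plan is to deduce both parts from Corollary~\ref{cor:karas general}, as indicated in the paragraph preceding the statement where the author classifies Corollary~\ref{cor:LiDu1} among the ``special cases of Corollary~\ref{cor:karas general}.'' In each case the ``if'' direction follows from Proposition~\ref{prop:Karas345}, so the real task is to verify the hypothesis~(\ref{eq:b}) together with one of the six sufficient conditions in Lemma~\ref{lem:a total}.

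For part~(i), I may assume $d_1 \neq d_2$; since $d_1 \leq d_2$ this forces $d_1 < d_2$, and combined with $d_2$ prime and $d_1 \geq 3$ it yields $\gcd(d_1,d_2) = 1$. Hence $\gcd(d_1,d_2,d_3) = \gcd(d_1,d_2) = 1 \leq 3$, so~(\ref{eq:b}) holds. Next I invoke condition~(2) of Lemma~\ref{lem:a total}: the hypothesis $d_1/\gcd(d_1,d_3) \neq 2$ is literally $d_1 \neq 2\gcd(d_1,d_3)$, and $d_2' = d_2/\gcd(d_1,d_2,d_3) = d_2$ is odd because $d_2$ is a prime $\geq 3$. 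Corollary~\ref{cor:karas general} then yields that $(d_1,d_2,d_3) \in \mdeg \T_3(k)$ is equivalent to $d_1 \mid d_2$ or $d_3 \in \Zn d_1 + \Zn d_2$; under $d_1 < d_2$ with $d_2$ prime and $d_1 \geq 3$, the alternative $d_1 \mid d_2$ is excluded, giving exactly the required equivalence.

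For part~(ii), the assumption $\gcd(d_1,d_2) = 1$ gives $\gcd(d_1,d_2,d_3) = \gcd(d_1,d_2) = 1 \leq 3$, so~(\ref{eq:b}) is immediate, and condition~(4) of Lemma~\ref{lem:a total} holds trivially because $d_3$ is prime. A second application of Corollary~\ref{cor:karas general} gives that membership in $\mdeg \T_3(k)$ is equivalent to $d_1 \mid d_2$ or $d_3 \in \Zn d_1 + \Zn d_2$, and $d_1 \mid d_2$ is ruled out by $\gcd(d_1,d_2) = 1$ and $d_1 \geq 3$. There is essentially no obstacle: all the substantive content resides in Theorem~\ref{thm:total degree} (whence Corollary~\ref{cor:karas general}), and the present proof simply matches hypotheses, with the only mild subtlety being the rephrasing of ``$d_1 \mid d_2$'' as ``$d_1 = d_2$'' in the presence of the primality of $d_2$.
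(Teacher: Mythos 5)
Your proposal is correct and follows exactly the paper's own route: the paper likewise derives part (i) from case (2) of Corollary~\ref{cor:karas general} (noting $\gcd(d_1,d_2)=1$ when $d_1\neq d_2$ since $d_2$ is an odd prime) and part (ii) from case (4), with the ``if'' directions supplied by Proposition~\ref{prop:Karas345}. The hypothesis-matching details you spell out are all accurate.
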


The ``only if" part of 
Corollary~\ref{cor:LiDu1} (i) 
follows from (2) of Corollary~\ref{cor:karas general}, 
since $d_2$ is an odd prime number, 
and so $\gcd (d_1,d_2)=1$ if $d_1\neq d_2$. 
(ii) is a special case of 
(4) of Corollary~\ref{cor:karas general}.

The following result is given by 
Li-Du~\cite[Theorem 3.3]{LiDu2}.

\begin{corollary}[Li-Du]\label{cor:tousa}
Let $a,d\in \N $ be such that $a\geq 3$. 
Assume that $4d\neq ta$ for any odd number $t\geq 1$. 
Then, 
$(a,a+d,a+2d)$ belongs to $\mdeg \T_3(k)$ 
if and only if $a\mid 2d$. 
\end{corollary}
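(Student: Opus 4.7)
The plan is to apply Theorem~\ref{thm:total degree} directly, setting $d_1=a$, $d_2=a+d$, $d_3=a+2d$; since $d\geq 1$ we have $d_1<d_2<d_3$. The ``if'' direction is an immediate appeal to Proposition~\ref{prop:Karas345}: if $a\mid 2d$, then $d_3=(1+2d/a)d_1\in \Zn d_1+\Zn d_2$. For the ``only if'' direction I prove the contrapositive: assuming $a\nmid 2d$, verify conditions (a), (b), (c) of Theorem~\ref{thm:total degree} and conclude $(d_1,d_2,d_3)\notin \mdeg \T_3(k)$.

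For (c), the condition $a\mid d$ would give $a\mid 2d$, so $d_1\nmid d_2$; and a hypothetical relation $d_3=ud_1+vd_2$ with $u,v\in \Zn $ rearranges to $(u+v-1)a=(2-v)d$. A short case analysis on $v\in\{0,1,2\}$ yields $a\mid 2d$, $a\mid d$, or $(u+1)a=0$ (each impossible), while $v\geq 3$ forces the two sides to have opposite signs. For (a1), the equality $3d_2=2d_3$ collapses to $a=d$ (hence $a\mid 2d$), and $sd_1=2d_3$ for odd $s\geq 3$ reduces to $(s-2)a=4d$ with $s-2$ an odd positive integer, directly contradicting the standing hypothesis.

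For (b), set $g=\gcd(a,d)$ and write $a=ga'$, $d=gd'$ with $\gcd(a',d')=1$; one checks that $g=\gcd(d_1,d_2,d_3)=\gcd(d_1,d_2)$. The assumption $a\nmid 2d$ forces $a'\nmid 2$, so $a'\geq 3$. Using $\lcm(d_1,d_2)=ga'(a'+d')$, inequality (b2) simplifies (after clearing $g$) to $(a+d)(3-a')\leq 2$, which holds automatically since $a'\geq 3$ makes the left-hand side non-positive. Thus (a), (b), (c) all hold, and Theorem~\ref{thm:total degree} gives the desired conclusion. I anticipate no serious obstacle; the attractive feature of the argument is that the hypothesis ``$4d\neq ta$ for odd $t\geq 1$'' is used exactly once, to exclude $sd_1=2d_3$ in (a1), while $a\geq 3$ and $a\nmid 2d$ carry the rest of the verifications on their own.
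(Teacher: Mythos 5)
Your proposal is correct and follows essentially the same route as the paper: verify (c) by the same case analysis on the coefficient of $d_2$, verify (a1) using $3d_2=2d_3\Rightarrow a=d$ and $sd_1=2d_3\Rightarrow (s-2)a=4d$, and verify (b2) via $a/\gcd(a,d)\geq 3$ (your $a'\geq 3$), then invoke Theorem~\ref{thm:total degree}. The only cosmetic difference is that the paper treats Corollaries~\ref{cor:tousa} and \ref{cor:tousa2} in a single combined argument.
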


We can slightly extend this result as follows.

\begin{corollary}\label{cor:tousa2}
Let $a,d\in \N $ be such that $a\geq 3$. 
Assume that $a=4l$ and $d=tl$ 
for some $l\in \N $ and an odd number $t\geq 1$ 
with $(t-4)l+2\geq 0$. 
Then, 
$(a,a+d,a+2d)$ belongs to $\mdeg \T_3(k)$ 
if and only if $a\mid 2d$. 
\end{corollary}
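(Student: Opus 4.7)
The plan is to apply Theorem~\ref{thm:total degree} to the triple $d_1=4l$, $d_2=(4+t)l$, $d_3=2(2+t)l$. As a preliminary observation, since $t$ is odd, $a=4l$ never divides $2d=2tl$, so the hypothesis ``$a\mid 2d$'' fails under our standing assumptions and the asserted equivalence collapses to the single claim that $(a,a+d,a+2d)\notin\mdeg\T_3(k)$. It then suffices to verify conditions (a), (b), (c) for this triple.

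For (c), the equality $\gcd(4,4+t)=\gcd(4,t)=1$ (from $t$ odd) immediately yields $d_1\nmid d_2$. To exclude $d_3\in\Zn d_1+\Zn d_2$, I would write any hypothetical relation $d_3=ud_1+vd_2$, use $d_3<2d_2$ to force $v\in\{0,1\}$, and then rule out each case: $v=0$ gives $2(2+t)=4u$, requiring $t$ even; $v=1$ gives $t=4u$, again requiring $t$ even. Both contradict the parity of $t$, so (c) holds.

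For (a), a direct computation shows $3d_2=2d_3$ would force $t=4$ (excluded), while $sd_1=2d_3$ does hold precisely for $s=t+2$, which is an odd integer $\geq 3$. Hence (a1) fails and I must establish (a2): the inequality $d_1+d_2\leq d_3+2$ rearranges to $(t-4)l\geq -2$, which is exactly the standing hypothesis $(t-4)l+2\geq 0$. For (b), one checks $\lcm(d_1,d_2)=4(4+t)l$ and $d_1+d_2+d_3=(12+3t)l$, so (b2) becomes $-(4+t)l\leq 2$, automatic.

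With (a), (b), (c) in hand, Theorem~\ref{thm:total degree} delivers the desired non-membership. I do not expect any genuine obstacle: the condition $(t-4)l+2\geq 0$ was evidently designed so that (a2) is satisfied, and the parity of $t$ handles the remaining checks. The most delicate step is the elimination of $d_3\in\Zn d_1+\Zn d_2$ in (c), but the bound $d_3<2d_2$ reduces it to a finite case analysis by parity.
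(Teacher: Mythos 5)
Your proof is correct and follows essentially the same route as the paper: both apply Theorem~\ref{thm:total degree} after verifying (a2), (b2) and (c), with (a2) coming from the hypothesis $(t-4)l+2\geq 0$. The only cosmetic difference is that the paper handles Corollaries~\ref{cor:tousa} and \ref{cor:tousa2} in a single argument phrased via $a\nmid 2d$ and $a/\gcd(a,d)\geq 3$, whereas you substitute the parametrization $a=4l$, $d=tl$ throughout and add the (correct) remark that $a\mid 2d$ is impossible here, so the ``if'' direction is vacuous.
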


We prove the ``only if" parts of 
Corollaries~\ref{cor:tousa} and \ref{cor:tousa2} 
together by means of 
Theorem~\ref{thm:total degree}. 
Set $(d_1,d_2,d_3):=(a,a+d,a+2d)$. 
Assume that $a\nmid 2d$. 
Then, 
we have $a\nmid d$, 
and so $d_1\nmid d_2$. 
We show that 
$d_3$ does not belong to $\Zn d_1+\Zn d_2$ 
by contradiction. 
Suppose that $a+2d=b_1a+b_2(a+d)$ 
for some $b_1,b_2\in \Zn $. 
Then, 
we have $b_2\leq 1$, 
since $a$ and $d$ are positive. 
If $b_2=1$, 
then it follows that $d=b_1a$, 
and hence $a\mid d$, 
a contradiction. 
If $b_2=0$, 
then we have $2d=(b_1-1)a$, 
and hence $a\mid 2d$, 
a contradiction. 
Thus, 
$d_3$ does not belong to $\Zn d_1+\Zn d_2$. 
Therefore, 
(c) is satisfied. 
Next, 
we show (b2). 
Since $a\nmid d$, 
we have $a/\gcd (a,d)\geq 2$. 
We claim that $a/\gcd (a,d)\geq 3$. 
In fact, 
if $a=2\gcd (a,d)$, 
then $a$ divides $2d$, 
a contradiction. 
Thus, 
we get 
$$
\lcm (d_1,d_2)
=\frac{d_1d_2}{\gcd (d_1,d_2)}
=\frac{a(a+d)}{\gcd (a,d)}
\geq 3(a+d)=d_1+d_2+d_3. 
$$
This implies (b2). 
Finally, 
we show (a). 
In the case of 
Corollary~\ref{cor:tousa}, 
$sd_1=sa$ is not equal to $2d_3=2(a+2d)$ 
for any odd number $s\geq 3$, 
for otherwise $4d=(s-2)a$. 
Since $a\neq d$, 
we get $3d_2=3(a+d)\neq 2(a+2d)=2d_3$. 
Thus, 
(a1) is satisfied. 
In the case of Corollary~\ref{cor:tousa2}, 
we have 
$$
d_1+d_2=4l+(4l+tl)
\leq 8l+tl+\Bigl((t-4)l+2\Bigr)
=4l+2tl+2=d_3+2. 
$$
Hence, 
(a1) is satisfied. 
Therefore, 
$(d_1,d_2,d_3)$ does not belong to $\mdeg \T_3(k)$ 
by Theorem~\ref{thm:total degree}. 
This proves the ``only if" parts of 
Corollaries~\ref{cor:tousa} and \ref{cor:tousa2}.

The following corollary is a generalization of 
Kara\'s~\cite[Theorem 7.9]{Karas thesis} and 
Sun-Chen~\cite[Remark 3.9]{SunChen}. 
We mention that 
Kara\'s originally proved the case 
where $d$ is a prime number with $5\leq d\leq 31$, 
while 
Sun-Chen deduced from Corollary~\ref{cor:SunChen} 
that the assertion holds whenever 
$d$ is a prime number with $d\geq 5$ 
(Sun-Chen also included the case where $d=3$, 
but this is an obvious mistake in view 
of Proposition~\ref{prop:Karas345}).

\begin{corollary}\label{cor:jc}
If $d\geq 5$ and $d\neq 6,8$ for $d\in \N $, 
then $(d,2(d-2),3(d-2))$ does not belong to $\mdeg \T_3(k)$. 
\end{corollary}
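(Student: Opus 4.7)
The plan is to reduce the assertion to Theorem~\ref{thm:total degree} applied to the triple $(d_1,d_2,d_3)=(d,\,2(d-2),\,3(d-2))$. Observe first that $d\geq 5$ guarantees $d_1\leq d_2\leq d_3$ (in fact $d_1<d_2<d_3$ for such $d$), so the ordering hypothesis is in place. It then remains to verify conditions (a), (b), (c) on this triple for all $d\geq 5$ with $d\neq 6,8$.

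For (c), the claim $d_1\nmid d_2$ reduces to $d\nmid 4$, which holds for every $d\geq 5$. For $d_3\notin \Zn d_1+\Zn d_2$, I would write $3(d-2)=ad+b\cdot 2(d-2)$ with $a,b\in\Zn $ and argue that $b\geq 2$ is numerically impossible, while $b=0$ forces $d\mid 6$ and $b=1$ forces $d\mid 2$; thus $d_3\in\Zn d_1+\Zn d_2$ only when $d\in\{1,2,3,6\}$, and these are all excluded by our hypothesis on $d$.

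For (a), a direct computation gives $3d_2=6(d-2)=2d_3$, so (a1) fails. However $d_1+d_2=3d-4=d_3+2$, and the equivalent reformulation (a2$'$) after Theorem~\ref{thm:total degree} gives (a) outright (since $s=3$ here and $2d_1=2d\leq 2(d-2)+4=d_2+4$). So (a) always holds for this family.

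For (b), I would split on $d\bmod 4$. Since $\gcd(d_1,d_2)=\gcd(d,2(d-2))=\gcd(d,4)$, if $d$ is odd or $d\equiv 2\pmod 4$ then $\gcd(d_1,d_2)\in\{1,2\}\leq 3$; moreover this gcd divides $d_3=3(d-2)$ (trivially if $1$, and because $d-2$ is even if $d\equiv 2\pmod 4$), so $\gcd(d_1,d_2,d_3)=\gcd(d_1,d_2)$ and (b1) is satisfied. If $4\mid d$, then (b1) fails and one must check (b2). Here $\lcm(d_1,d_2)=d(d-2)/2$ while $d_1+d_2+d_3=6d-10$, so (b2) becomes $d^2-14d+24\geq 0$, which holds exactly for $d\leq 2$ or $d\geq 12$. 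The only multiples of $4$ in $[5,11]$ are $d=8$, excluded by hypothesis; for all other multiples of $4$ with $d\geq 12$, (b2) holds. The main ``obstacle,'' such as it is, is organizing the $4\mid d$ case cleanly---this is precisely the numerical reason $d=8$ must be excluded, and after stating this dichotomy the whole corollary follows at once from Theorem~\ref{thm:total degree}.
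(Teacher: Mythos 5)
Your proposal is correct and follows essentially the same route as the paper: both apply Theorem~\ref{thm:total degree} to $(d,2(d-2),3(d-2))$, verify (a) via $d_1+d_2=d_3+2$, and rule out $d_3\in\Zn d_1+\Zn d_2$ by the same case analysis on the coefficient of $d_2$. The only cosmetic difference is in (b): the paper establishes (b2) uniformly via the inequality $d\geq 3\gcd(d,4)$, while you use (b1) when $\gcd(d,4)\leq 2$ and reserve the quadratic estimate for $4\mid d$ --- both are valid and isolate $d=8$ for the same numerical reason.
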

\begin{proof}
Set $(d_1,d_2,d_3):=(d,2(d-2),3(d-2))$. 
We prove the assertion by means of 
Theorem~\ref{thm:total degree}. 
Since 
$$
d_1+d_2=3d-4=3(d-2)+2=d_3+2, 
$$
we have (a2). 
We check (c). 
Since $d\geq 5$, 
we see that $d_2=2(d-2)$ 
is not divisible by $d_1=d$. 
We show that 
$d_3$ does not belong to $\Zn d_1+\Zn d_2$ 
by contradiction. 
Suppose that 
$3(d-2)=a_1d+a_2\cdot 2(d-2)$ for some $a_1,a_2\in \Zn $. 
Then, 
we have $a_2\leq 1$, 
since $d\geq 5$. 
If $a_2=1$, 
then it follows that $(1-a_1)d=2$. 
This contradicts that 
$a_1$ and $d$ are integers with $d\geq 5$. 
If $a_2=0$, 
then we have $(3-a_1)d=6$. 
Since $d\neq 6$ and $d\geq 5$, 
we get a contradiction similarly. 
Thus, 
$d_3$ does not belong to $\Zn d_1+\Zn d_2$, 
proving (c). 
Finally, 
we show (b2). 
We claim that $d\geq 3\gcd (d,4)$. 
This is clear if $d\geq 12$. 
Since $d\geq 5$ and $d\neq 6,8$, 
the rest of the cases are checked easily. 
Hence, 
we have 
$$
\lcm (d_1,d_2)
=\frac{d_1d_2}{\gcd (d_1,d_2)}
=\frac{d\cdot 2(d-2)}{\gcd (d,4)}
\geq 3\cdot 2(d-2)=d_1+d_2+d_3-2, 
$$
proving (b2). 
Therefore, 
$(d_1,d_2,d_3)$ does not belong to $\mdeg \T_3(k)$ 
thanks to Theorem~\ref{thm:total degree}. 
\end{proof}

The following result of 
Kanehira~\cite[Theorem 28]{Kanehira} 
(cf.~\cite[Theorem 3.1]{Sep}) 
is derived from Theorem~\ref{thm:main}.

\begin{corollary}[Kanehira]
\label{cor:Kanehira}
Let $d_3\geq d_2>d_1\geq 3$ be integers 
such that $d_1$ and $d_2$ 
are odd numbers with $\gcd (d_1,d_2)=1$. 
If there exist $\w \in \N ^3$ 
and $F\in \T_3(k)$ such that $\mdegw F=(d_1,d_2,d_3)$
and $\degw F>|\w |$, 
then $d_3$ belongs to $\Zn d_1 +\Zn d_2 $.
\end{corollary}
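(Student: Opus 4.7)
The plan is to derive Corollary~\ref{cor:Kanehira} from Theorem~\ref{thm:main} by contraposition: assume $d_3 \notin \Zn d_1+\Zn d_2$ and verify that $(d_1,d_2,d_3)$ and $\w$ satisfy the four conditions (K1), (K2), (A), (B), so that $(d_1,d_2,d_3)\notin \mdegw \T_3(k)$, contradicting the existence of $F$. First, I would dispose of the easy edge case $d_2=d_3$: then $d_3 = 0\cdot d_1+1\cdot d_2$ lies in $\Zn d_1+\Zn d_2$ automatically, so there is nothing to prove. Hence we may assume $d_1<d_2<d_3$, and combined with the assumption $\degw F > |\w|$, i.e.\ $d_1+d_2+d_3 > |\w|$, this gives (K1).

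Next I would check (K2). The condition $d_3 \notin \Zn d_1 + \Zn d_2$ is precisely our contrapositive hypothesis. For $d_2 \notin \N d_1$: since $\gcd(d_1,d_2)=1$ and $d_1 \geq 3$, a relation $d_2 = m d_1$ with $m \in \N$ would force $d_1 \mid d_2$, hence $d_1=1$, contradicting $d_1 \geq 3$. Condition (A) then follows immediately, and here the oddness of $d_1$ and $d_2$ is essential: since $d_2$ is odd, $3d_2$ is odd, so $3d_2 \neq 2d_3$; and for any odd $s \geq 3$, $sd_1$ is odd (as $d_1$ is odd), so $sd_1 \neq 2d_3$. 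Thus (A1), and therefore (A), holds.

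Finally for (B), note that $d_1, d_2 \in \N$ are always linearly dependent over $\Z$, so I must verify (B1) or (B2). The hypothesis $\gcd(d_1,d_2)=1$ makes (B1) essentially automatic: $d_3 \in \N = \N\gcd(d_1,d_2)$ trivially, and since $\w \in \N^3$ we have $w_i \geq 1$ for each $i$, whence $|\w|_* \geq 2w_{\sigma(1)}+w_{\sigma(3)} \geq 3 \geq 1 = \gcd(d_1,d_2)$ from the defining formula for $|\w|_*$. Thus (B1) holds and so does (B).

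With (K1), (K2), (A), (B) all in hand, Theorem~\ref{thm:main} yields $(d_1,d_2,d_3) \notin \mdegw \T_3(k)$, contradicting the existence of $F \in \T_3(k)$ with $\mdegw F = (d_1,d_2,d_3)$. There is really no hard step here; the whole argument is bookkeeping verifying that the hypotheses of the main theorem are met, with the only nontrivial input being the observation that the parity obstruction ($d_1, d_2$ odd) kills both branches of condition (A).
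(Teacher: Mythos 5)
Your proof is correct and follows essentially the same route as the paper's own: both argue by contraposition, verify (K1), (K2), (A1) and (B1) in the same way (using $d_2\neq d_3$ to get strict inequalities, $\gcd(d_1,d_2)=1$ with $d_1\geq 3$ for (K2), parity of $d_1,d_2$ for (A1), and $\gcd(d_1,d_2)=1$ for (B1)), and then invoke Theorem~\ref{thm:main}. One small slip in your justification of (B1): since $|\w |_*$ is defined as a \emph{minimum} of a set containing $2w_{\sigma (1)}+w_{\sigma (3)}$, you get $|\w |_*\leq 2w_{\sigma (1)}+w_{\sigma (3)}$ rather than $\geq $; the bound you actually need, $|\w |_*\geq 1=\gcd (d_1,d_2)$, still holds because every candidate entering that minimum is a sum of positive integers and hence at least $3$.
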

\begin{proof}
By assumption, 
$(d_1,d_2,d_3)$ is an element of $\mdegw \T _3(k)$ 
such that $d_1<d_2\leq d_3$ and 
$d_1+d_2+d_3>|\w |$. 
Suppose to the contrary that 
$d_3$ does not belong to $\Zn d_1 +\Zn d_2 $. 
Then, 
we have $d_2\neq d_3$, 
and so $d_2<d_3$. 
Thus, 
(K1) is satisfied. 
Since $d_1\geq 3$ and $\gcd (d_1,d_2)=1$, 
we see that 
$d_2$ does not belong to $\N d_1$. 
Hence, 
we get (K2) by supposition. 
Since $d_1$ and $d_2$ are odd numbers, 
(A1) is clear. 
Since $\gcd (d_1,d_2)=1$, 
we see that (B1) is satisfied. 
Thus, 
$(d_1,d_2,d_3)$ does not belong to $\mdegw \T_3(k)$ 
by Theorem~\ref{thm:main}. 
This is a contradiction. 
Therefore, 
$d_3$ belongs to $\Zn d_1 +\Zn d_2 $. 
\end{proof}

As noted after Theorem~\ref{thm:|w|*}, 
$\Delta _{\w }(4,6)\geq 4$ holds when 
$\Gamma =\Z $ and $\w =(1,1,1)$ 
by Kara\'s~\cite[Theorem 14]{456}. 
Modulo this result, 
Theorem~\ref{thm:main} 
implies Kara\'s~\cite[Theorem 1]{456} 
asserting that 
$(4,5,6)$ does not belong to $\mdeg \T_3(k)$. 
In fact, 
$(d_1,d_2,d_3)=(4,5,6)$ satisfies (K1) and (K2) 
for $\Gamma =\Z $ and $\w =(1,1,1)$. 
Since $3d_1=12=2d_3$, 
and $d_1+d_2=9$ is less than $d_3+\Delta _{\w }(4,6)=10$, 
we see that (A3) holds for $s=3$. 
Since $d_1+d_2+d_3=15$ is less than 
$\lcm (d_1,d_2)=20$, 
we get (B2).

\noindent
Department of Mathematics and Information Sciences\\ 
Tokyo Metropolitan University \\
1-1  Minami-Osawa, Hachioji \\
Tokyo 192-0397, Japan\\
kuroda@tmu.ac.jp

\end{document}